\theoremstyle{plain}
\newtheorem{theorem}{Theorem}[section]
\newtheorem{lemma}[theorem]{Lemma}
\newtheorem{corollary}[theorem]{Corollary}
\newtheorem{proposition}[theorem]{Proposition}
\newtheorem{fact}[theorem]{Fact}
\theoremstyle{definition}
\theoremstyle{remark}
\newtheorem{remark}[theorem]{Remark}
\numberwithin{equation}{section}
\def\C{\ensuremath{\mathbb{C}}}
\def\D{\ensuremath{\mathbb{D}}}
\def\Db{\ensuremath{\overline{\D}}}
\def\e{\ensuremath{\mathrm{e}}}
\def\E{\ensuremath{\mathbb{E}}}
\def\Et{\ensuremath{\widetilde{E}}}
\def\empt{\varnothing}
\def\ep{\varepsilon}
\def\F{\ensuremath{\mathcal{F}}}
\def\Ft{\ensuremath{\widetilde{\F}}}
\def\G{\ensuremath{\mathcal{G}}}
\def\H{\ensuremath{\mathcal{H}}}
\def\Ind{\ensuremath{\mathbbm{1}}}
\def\lc{\lambda_c}
\def\lcb{\overline{\lambda_c}}
\def\N{\ensuremath{\mathbb{N}}}
\def\P{\ensuremath{\mathbb{P}}}
\def\Pt{\ensuremath{\widetilde{P}}}
\def\R{\ensuremath{\mathbb{R}}}
\def\S{\ensuremath{\mathcal{S}}}
\def\T{\ensuremath{\mathcal{T}}}
\def\Tt{\ensuremath{\widetilde{\T}}}
\def\tree{\ensuremath{\mathfrak{t}}}
\def\Z{\ensuremath{\mathbb{Z}}}
\def\bfr{\mathfrak{b}}
\def\sfr{\mathfrak{s}}
\def\Otilde{\widetilde{O}}
\renewcommand\Re{\operatorname{Re}}
\renewcommand\Im{\operatorname{Im}}
\def\to{\rightarrow}
\def\tand{\ensuremath{\text{ and }}}
\def\tif{\ensuremath{\text{ if }}}
\def\tas{\ensuremath{\text{ as }}}
\def\ton{\ensuremath{\text{ on }}}
\newcommand{\dd}{\mathrm{d}}
\author{\textsc{Pascal Maillard}\thanks{Laboratoire de Probabilités et Modèles Aléatoires, UMR 7599, Université Paris VI, Case courrier 188, 4 Place Jussieu, 75252 PARIS Cedex 05, France, e-mail: \texttt{pascal.maillard@upmc.fr}}}
\title{The number of absorbed individuals in branching Brownian motion with a barrier}
\begin{document}


\maketitle

{\leftskip=2truecm \rightskip=2truecm \baselineskip=15pt \small

\noindent{\bfseries Summary.} We study supercritical branching Brownian motion on the real line starting at the origin and with constant drift $c$. At the point $x > 0$, we add an absorbing barrier, i.e.\ individuals touching the barrier are instantly killed without producing offspring. It is known that there is a critical drift $c_0$, such that this process becomes extinct almost surely if and only if $c \ge c_0$. In this case, if $Z_x$ denotes the number of individuals absorbed at the barrier, we give an asymptotic for $P(Z_x=n)$ as $n$ goes to infinity. If $c=c_0$ and the reproduction is deterministic, this improves upon results of L. Addario-Berry and N. Broutin \cite{AB2009} and E. A\"{\i}d\'ekon \cite{Aid2009} on a conjecture by David Aldous about the total progeny of a branching random walk. The main technique used in the proofs is analysis of the generating function of $Z_x$ near its singular point $1$, based on classical results on some complex differential equations.

%
%
\bigskip

\noindent{\bfseries Keywords.} Branching Brownian motion, Galton--Watson process, Briot--Bouquet equation, FKPP equation, travelling wave, singularity analysis of generating functions.

\bigskip

%
%
\noindent{\bfseries MSC2010.} Primary: 60J80. Secondary: 34M35.

}

\section{Introduction}
We define branching Brownian motion as follows. Starting with an initial individual sitting at the origin of the real line, this individual moves according to a 1-dimensional Brownian motion with drift $c$ until an independent exponentially distributed time with rate 1. At that moment it dies and produces $L$ (identical) offspring, $L$ being a random variable taking values in the non-negative integers with $P(L=1) = 0$. Starting from the position at which its parent has died, each child repeats this process, all independently of one another and of their parent. For a rigorous definition, see for example \cite{Cha1991}.

We assume that $m = E[L]-1 \in (0,\infty)$, which means that the process is supercritical. At position $x>0$, we add an {\em absorbing barrier}, i.e.\ individuals hitting the barrier are instantly killed without producing offspring. Kesten proved \cite{Kes1978} that this process becomes extinct almost surely if and only if the drift $c \ge c_0 = \sqrt{2 m}$ (he actually needed $E[L^2] < \infty$ for the ``only if'' part, but we are going to prove that the statement holds in general). A conjecture of David Aldous \cite{Ald0000}, originally stated for the branching random walk, says that the number $N_x$ of individuals that have lived during the lifetime of the process satisfies $E[N_x]<\infty$ and $E[N_x \log^+ N_x] = \infty$ in the critical speed area ($c=c_0$), and $P(N_x > n)\sim K n^{-\gamma}$ in the subcritical speed area ($c > c_0$), with some $K > 0, \gamma > 1$. For the branching random walk, the conjecture of the critical case was proven by Addario-Berry and Broutin \cite{AB2009} for general reproduction laws satisfying a mild integrability assumption. A\"{\i}d\'ekon \cite{Aid2009} refined the results for constant $L$ by showing that there are positive constants $\rho,C_1,C_2$, such that for every $x>0$, we have \[\frac{C_1x\e^{\rho x}}{n (\log n)^2} \le P(N_x > n) \le \frac{C_2x\e^{\rho x}}{n (\log n)^2}\quad\text{for large }n.\]

Assuming $L$ constant has the advantage that $N_x$ is directly related to the number $Z_x$ of individuals {\em absorbed at the barrier} by $N_x-1 = (Z_x-1)(L/(L-1))$, hence it is possible to study $N_x$ through $Z_x$. In this sense, Neveu \cite{Nev1988} had already proven the critical case conjecture for branching Brownian motion since he showed that the process $Z=(Z_x)_{x\ge0}$ is actually a continuous-time Galton--Watson process of finite expectation, but with $E[Z_x\log^+ Z_x]=\infty$ for every $x>0$, if $c=c_0$.

Let $\N = \{1,2,3,\ldots\}$ and $\N_0 = \{0\}\cup\N$. Define the infinitesimal transition rates (see \cite{AN1972}, p.\ 104, Equation (6) or \cite{Har1963}, p.\ 95)\[q_n = \lim_{x\downarrow 0} \frac 1 x P(Z_x = n),\quad n \in \N_0\backslash\{1\}.\] We propose a refinement of Neveu's result:

\begin{theorem}
\label{th_tail}
Assume $c=c_0$. Assume that $E[L(\log L)^{2+\ep}] < \infty$ for some $\ep > 0$. Then we have as $n\to\infty$,
\[\sum_{k=n}^\infty q_k \sim \frac{c_0}{n (\log n)^2}\quad\tand\quad P(Z_x > n) \sim \frac{c_0 x \e^{c_0 x}}{n(\log n)^2} \quad\text{for each $x>0$}.\]
\end{theorem}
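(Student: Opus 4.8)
The plan is to translate the probabilistic problem into analytic control of the generating function $F(s,x) = E[s^{Z_x}]$ near its singularity at $s=1$, and then apply transfer theorems from singularity analysis. First I would write down the differential equation satisfied by $x\mapsto F(s,x)$. Since $Z=(Z_x)_{x\ge 0}$ is a continuous-time Galton--Watson process (Neveu's result), the generating function solves an ODE in $x$ whose right-hand side is built from $c$, the Laplace-type operator coming from the Brownian motion with drift (producing a second-order term), and the offspring generating function $f(s)=E[s^L]$. Concretely one expects something like $\tfrac12 F'' - c F' + (f(F)-F)=0$ with $F(s,0)=s$ and a boundary condition at the barrier encoded by $Z_0$ counting; this is the stationary FKPP/travelling-wave equation, and the relevant solution is the one normalized by absorption at $x$. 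The key is that near $s=1$ this is a Briot--Bouquet equation (an analytic ODE with a regular singular point), so classical results (referenced in the abstract) give the local form of the solution.

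The core of the argument is the local expansion of $F(s,x)$ as $s\to 1^-$. In the critical case $c=c_0=\sqrt{2m}$ the indicial roots of the linearized equation coincide, which is precisely the mechanism that produces logarithmic corrections. I would set $u = 1-s$ and seek the singular expansion of $1-F(s,x)$; the expected leading behaviour, matching the claimed tail $P(Z_x>n)\sim c_0 x e^{c_0 x}/(n(\log n)^2)$, is $1-F(s,x) \sim c_0 x e^{c_0 x}\,(1-s)/\log\frac{1}{1-s}$ as $s\uparrow 1$, with a comparable statement $1-\sum q_n s^n \sim c_0 (1-s)/\log\frac{1}{1-s}$ for the infinitesimal generator. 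The factor $x e^{c_0 x}$ should emerge as the value at $x$ of the critical (derivative-of-)travelling-wave solution, i.e.\ the bounded solution of the linearized problem that vanishes at the appropriate boundary — this is the classical $x e^{-c_0 x}$ type solution of $\tfrac12 \phi'' - c_0\phi' + m\phi = 0$, suitably reoriented. The integrability hypothesis $E[L(\log L)^{2+\ep}]<\infty$ enters here: it is exactly the condition ensuring that $f(s)$ is close enough to its linear part near $s=1$ (an $x\log^2 x$-type modulus-of-continuity on $f''$) so that the nonlinear perturbation does not destroy the $1/\log$ asymptotics — this is the analogue of the $L\log^2 L$ condition ubiquitous in branching random walk, and I would prove the needed estimate by a bootstrap/fixed-point argument on the integral equation equivalent to the ODE, working in a small sector around $s=1$ in $\C$.

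Once the singular expansion $1-F(s,x) \sim c_0 x e^{c_0 x}(1-s)/\log\frac{1}{1-s}$ is established together with analytic continuation of $F(\cdot,x)$ to a Camembert/$\Delta$-domain (a disk around $0$ with a slit near $1$ removed) — which again requires the analytic-ODE machinery to propagate regularity off the real axis — I would invoke the standard transfer theorem of Flajolet--Odlyzko: a function with a $\Delta$-analytic continuation and singular part asymptotic to $(1-s)/\log^{1}\frac{1}{1-s}$ has coefficients asymptotic to $n^{-2}/\log^2 n$ up to the right constant (the extra power of $\log$ in the exponent of the coefficient asymptotics comes from the transfer of $(1-s)\log^{-1}$, whose coefficients behave like $n^{-2}(\log n)^{-2}$). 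Summing, $\sum_{k>n} q_k \sim c_0/(n\log^2 n)$ and $P(Z_x>n)\sim c_0 x e^{c_0 x}/(n\log^2 n)$ follow. The main obstacle, I expect, is the second step: getting the precise logarithmic constant in the local expansion of $1-F$, controlling the nonlinear term under the sharp $L(\log L)^{2+\ep}$ hypothesis, and simultaneously establishing the $\Delta$-analyticity needed to legitimately apply singularity analysis — the real-variable asymptotics alone are not enough, and one must track the ODE solution into the complex plane where the Briot--Bouquet normal form and a careful majorant-series estimate do the work.
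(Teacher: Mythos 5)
Your proposal is, in essence, the paper's strategy for Theorem~\ref{th_density}, not for Theorem~\ref{th_tail}, and transposing it here creates a genuine gap. Under the hypothesis $E[L(\log L)^{2+\ep}]<\infty$ alone, $f(s)=E[s^L]$ need not admit any analytic continuation beyond the closed unit disk; consequently there is no reason for $a(s)$ or $F_x(s)$ to extend to a $\Delta$-domain (disk of radius $>1$ with a slit at $1$), and without that extension the Briot--Bouquet local normal form in the complex plane and the Flajolet--Odlyzko transfer theorem (Fact~\ref{th_singanalysis_o}) simply cannot be invoked. You flag this as ``the main obstacle'' but treat it as a technical difficulty to be overcome by a majorant/bootstrap argument in a complex sector near $1$; in fact it is a hard constraint, which is precisely why the paper imposes the strictly stronger hypothesis that $E[s^L]$ has radius of convergence $>1$ when carrying out the singularity-analysis proof (Theorem~\ref{th_density}), and does something different here.

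The paper's actual proof of Theorem~\ref{th_tail} stays entirely on the real line. It takes the travelling wave $\psi_-$ from Proposition~\ref{prop_FKPP}, sets $\phi(x)=1-\psi_-(-x)$, and quotes the known asymptotic $\phi(x)\sim Kx\e^{-c_0x}$ as $x\to\infty$ (valid under the $L(\log L)^{2+\ep}$ condition via \cite{Kyp2004}, and in fact under $L\log^2 L$ via \cite{Yang2011}). It then writes the first-order system for $(\phi',\phi)$, diagonalises via the Jordan form of the critical matrix $M$, and uses the real Abelian estimate Lemma~\ref{lem_XlogX} -- which is exactly where the $L(\log L)^{2+\ep}$ moment is spent -- to get $(\phi'+c_0\phi)(x)\sim K\e^{-c_0x}$. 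This yields real asymptotics for $a''(1-s)$ and $F_x''(1-s)$ as $s\downarrow 0$ on the real axis, and then a standard \emph{Tauberian} theorem (Feller, \cite{Fel1971}) converts those into the stated tail asymptotics. No analytic continuation off $[0,1]$ is required. If you wish to keep your complex-analytic scheme, you must either add the exponential-moment hypothesis (thereby proving a weaker statement) or replace the transfer theorem with a Tauberian argument that consumes only real boundary information, which is what the paper does.
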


The heavy tail of $Z_x$ suggests that its generating function is amenable to singularity analysis in the sense of \cite{FO1990}. This is in fact the case in both the critical and subcritical cases if we impose a stronger condition upon the offspring distribution and leads to the next theorem.

Define $f(s)=E[s^L]$ the generating function of the offspring distribution. Denote by $\delta$ the span of $L-1$, i.e.\ the greatest positive integer, such that $L-1$ is concentrated on $\delta\Z$. Let $\lc \le \lcb$ be the two roots of the quadratic equation $\lambda^2 - 2c\lambda + c_0^2 = 0$ and denote by $d=\frac{\lcb}{\lc}$ the ratio of the two roots. Note that $c = c_0$ if and only if $\lc = \lcb$ if and only if $d=1$.

\begin{theorem}
\label{th_density}
Assume that the law of $L$ admits exponential moments, i.e. that the radius of convergence of the power series $E[s^L]$ is greater than 1.
\begin{itemize}[label=$-$]
\item In the critical speed area $(c = c_0)$, as $n\to\infty$,
\[q_{\delta n+1} \sim \frac{c_0}{\delta  n^2 (\log n)^2}\quad \tand \quad P(Z_x = \delta n + 1) \sim  \frac{c_0 x \e^{c_0x}}{\delta  n^2 (\log n)^2} \quad\text{for each $x>0$}.\]
\item In the subcritical speed area $(c > c_0)$ there exists a constant $K = K(c,f) > 0$, such that, as $n\to\infty$,
\[q_{\delta n+1} \sim \frac{K}{n^{d+1}}\quad\tand \quad P(Z_x = \delta n + 1) \sim \frac{\e^{\lcb x} - \e^{\lc x}}{\lcb-\lc} \frac{K}{n^{d+1}} \quad\text{for each $x>0$}.\]
\end{itemize}
Furthermore, $q_{\delta n + k} = P(Z_x = \delta n+ k) = 0$ for all $n\in\Z$ and $k\in\{2,\ldots,\delta\}$.
\end{theorem}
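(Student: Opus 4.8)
The plan is to study the generating function $F_x(s)=E[s^{Z_x}]$ and its limiting density via the PDE that $(x,s)\mapsto F_x(s)$ satisfies. Since $Z=(Z_x)_{x\ge 0}$ is a continuous-time Galton--Watson process (Neveu), the function $u(x)=F_x(s)$ solves a second-order ODE in $x$ coming from the Feynman--Kac/backward-Kolmogorov description of branching Brownian motion with absorption: writing $\psi(s)=\sum_n q_n s^n = \frac{c_0^2}{2}(f(s)-s)$ for the branching mechanism of the continuous-time Galton--Watson process, the travelling-wave / FKPP connection gives that $w(x)=F_x(s)$ satisfies $\frac12 w'' - c w' + \psi(w) = 0$ with boundary data $w(0)=s$ and $w(\infty)=q$ (the extinction probability, which is $1$ in the critical and subcritical regimes). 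The key structural input is that near the singular point $s=1$ this ODE, viewed as an equation for $x$ as a function of $w$ or after an appropriate change of variables, is of \emph{Briot--Bouquet} type, so classical results on such complex differential equations (as announced in the abstract) describe the analytic continuation of $F_x$ past $s=1$ and pin down the nature of the singularity.

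Concretely, I would first reduce to understanding the single universal function $\Phi$ defined as the solution of the autonomous ODE corresponding to $x\downarrow 0$, i.e.\ the generating function $Q(s)=\sum_{n\ne1} q_n s^n$; the stated asymptotics for $P(Z_x>n)$ and $P(Z_x=\delta n+1)$ for general $x$ should then follow from the ones for the $q_n$ by a transfer step, using the explicit dependence of $F_x$ on $x$ near $s=1$ (the factors $x\e^{c_0x}$ and $(\e^{\lcb x}-\e^{\lc x})/(\lcb-\lc)$ visible in the statements are exactly the leading coefficients produced by solving the linearized equation $\frac12 w''-cw'+\psi'(1)w=0$ at the regular point, whose indicial roots are $\lc,\lcb$). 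So the heart of the matter is: (i) show $s=1$ is the dominant singularity of $Q$ on its circle of convergence, with no other singularities on $|s|=1$ except those forced by the span $\delta$ (this gives the $\delta$-periodicity and the vanishing statement $q_{\delta n+k}=0$ for $2\le k\le\delta$); (ii) determine the local expansion of $Q(s)$ as $s\to1$; (iii) apply singularity analysis à la Flajolet--Odlyzko to read off the coefficient asymptotics.

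For step (ii), in the subcritical case $d>1$, Briot--Bouquet theory yields an expansion of the form $Q(s)= (\text{analytic}) + c\,(1-s)^{d}(\text{analytic in }(1-s))$ — the non-integer (when $d\notin\Z$) or logarithmically-corrected (resonant) power $(1-s)^d$ being exactly what produces $q_{\delta n+1}\sim K n^{-d-1}$; the constant $K$ is then the (generically nonzero) coefficient of this singular term, which depends on $c$ and $f$ in a way not expressible in closed form, hence the unspecified $K=K(c,f)$. In the critical case $d=1$ the two indicial roots coincide and the singular behaviour degenerates to the logarithmic type $Q(s)\sim 1 + c_0(1-s) + \frac{c_0(1-s)}{\log(1-s)}+\cdots$ (a $(1-s)/\log$-type singularity), which under singularity analysis gives $q_{\delta n+1}\sim \frac{c_0}{\delta n^2(\log n)^2}$ and, by summation/Abel-type transfer (Theorem \ref{th_tail}-style), $\sum_{k\ge n}q_k\sim \frac{c_0}{n(\log n)^2}$; this matches and refines Neveu's $E[Z_x\log^+Z_x]=\infty$.

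The main obstacle I anticipate is step (ii) in the critical case: when $d=1$ the Briot--Bouquet equation is \emph{resonant}, the naive power-series solution diverges, and one must carefully extract the logarithmic correction terms and, crucially, control them well enough (uniformly on a Flajolet--Odlyzko ``camembert'' domain, i.e.\ a $\Delta$-domain) to legitimately apply transfer theorems — bare knowledge of the formal expansion is not enough, one needs genuine analytic continuation with error bounds. A secondary difficulty is verifying that $s=1$ is really the unique dominant singularity (no earlier singularity on the positive axis from the nonlinearity $\psi$ blowing up, and nothing extra on the unit circle beyond the $\delta$-th roots of unity), which requires a monotonicity/convexity analysis of the real ODE on $[0,1)$ together with the exponential-moment hypothesis on $L$ (used precisely to push the singularity of $f$, hence of $\psi$, strictly outside the closed unit disk so that the singularity of $Q$ at $1$ comes from the differential equation and not from the data).
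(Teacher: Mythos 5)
Your overall strategy — pass through the FKPP/travelling-wave connection, reduce the equation near $s=1$ to a Briot--Bouquet equation, extract an expansion with a $(1-s)^d$ or $(1-s)^d\log(1-s)$ singular term (and a $(1-s)/\log(1-s)$ term in the resonant case $d=1$), and then apply Flajolet--Odlyzko transfer on a $\Delta$-domain — is precisely the route taken in the paper, and you correctly anticipate the two technical hurdles (resonance at $d=1$ and the need for genuine analytic continuation to a camembert domain, not just a formal expansion). You also correctly locate the $x$-dependence (the factors $x\e^{c_0x}$ and $(\e^{\lcb x}-\e^{\lc x})/(\lcb-\lc)$) in the linearised problem with indicial roots $\lc,\lcb$, which is what Corollary~\ref{cor_asymptotics_Ft} does via the integral identity of Proposition~\ref{prop_exp_Ft_a}.

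There are, however, two concrete points where the sketch goes wrong or leaves a genuine gap. First, the identity $\sum_n q_ns^n=\tfrac{c_0^2}{2}(f(s)-s)$ is false: the $(q_n)$ are the jump rates of the continuous-time Galton--Watson process $(Z_x)$, and its infinitesimal generating function $a(s)$ is \emph{not} an explicit linear function of $f$. It is determined only implicitly by the first-order nonlinear ODE $a'(s)a(s)=2ca(s)+2\bigl(s-f(s)\bigr)$, which is what the travelling-wave relation $a=\psi'\circ\psi^{-1}$ actually yields from the FKPP equation. This matters: since $f$ is analytic at $1$ under the exponential-moment hypothesis, your claimed formula for $\sum_n q_ns^n$ would be analytic at $1$ as well, which would make the theorem false. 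The whole content of the proof is that $a$, a solution of this ODE, develops a singularity at $1$ even though the coefficients of the ODE are analytic there. (Relatedly, the FKPP nonlinearity in this normalisation is $f(w)-w$, not $\tfrac{c_0^2}{2}(f(w)-w)$.) Second, you wave at $K$ being ``generically nonzero,'' but proving $K\ne0$ is exactly the point at which analytic manipulations alone do not suffice: in the paper it rests on the purely probabilistic Proposition~\ref{prop_heavytail} (a two-barrier second-moment/Paley--Zygmund argument) giving a polynomial lower bound $P(Z_x>n)\ge Cn^{-d}$, which via Pringsheim and Corollary~\ref{cor_1regular} forces $1$ to be a genuine singularity of $a$, hence $K\ne0$. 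Without some such external input, the Briot--Bouquet representation is consistent with $K=0$ and your argument cannot close.
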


\begin{remark}
The idea of using singularity analysis for the study of $Z_x$ comes from Robin Pemantle's (unfinished) manuscript \cite{Pem1999} about branching random walks with Bernoulli reproduction.
\end{remark}
\begin{remark}
Since the coefficients of the power series $E[s^L]$ are real and non-negative, Pringsheim's theorem (see e.g.\ \cite{FS2009}, Theorem IV.6, p.\ 240) entails that the assumption in Theorem \ref{th_density} is verified if and only if $f(s)$ is analytic at 1.
\end{remark}
\begin{remark}
Let $\beta > 0$ and $\sigma > 0$. We consider a more general branching Brownian motion with branching rate given by $\beta$ and the drift and variance of the Brownian motion given by $c$ and $\sigma^2$, respectively. Call this process the $(\beta,c,\sigma)$-BBM (the reproduction is still governed by the law of $L$, which is fixed). In this terminology, the process described at the beginning of this section is the $(1,c,1)$-BBM. The $(\beta,c,\sigma)$-BBM can be obtained from $(1,c/(\sigma \sqrt{\beta}),1)$-BBM by rescaling time by a factor $\beta$ and space by a factor $\sigma/\sqrt{\beta}$. Therefore, if we add an absorbing barrier at the point $x>0$, the $(\beta,c,\sigma)$-BBM gets extinct a.s.\ if and only if $c\ge c_0 = \sigma\sqrt{2\beta m}$. Moreover, if we denote by $Z_x^{(\beta,c,\sigma)}$ the number of particles absorbed at $x$, we obtain that
\[(Z_x^{(\beta,c,\sigma)})_{x\ge0} \tand (Z_{x\sqrt{\beta}/\sigma}^{(1,c/(\sigma\sqrt{\beta}),1)})_{x\ge0} \text{ are equal in law.}\]
In particular, if we denote the infinitesimal transition rates of $(Z_x^{(\beta,c,\sigma)})_{x\ge0}$ by $q_n^{(\beta,c,\sigma)}$, for $n\in \N_0\backslash\{1\}$, then we have
\[q^{(\beta,c,\sigma)}_n = \lim_{x\downarrow0} \frac 1 x P\Big(Z_x^{(\beta,c,\sigma)} = n\Big) = \frac {\sqrt{\beta}} \sigma \lim_{x\downarrow0} \frac \sigma {x\sqrt{\beta}} P\Big(Z_{x\sqrt{\beta}/\sigma}^{(1,c/(\sigma\sqrt{\beta}),1)} = n\Big) = \frac {\sqrt{\beta}} \sigma q^{(1,c/(\sigma\sqrt{\beta}),1)}_n.\]
One therefore easily checks {\em that the statements of Theorems \ref{th_tail} and \ref{th_density} are still valid for arbitrary $\beta>0$ and $\sigma>0$, provided that one replaces the constants $c_0,\lc,\lcb,K$ by $c_0/\sigma^2$, $\lc/\sigma^2$, $\lcb/\sigma^2$, $\frac{\sqrt \beta}{\sigma} K(c/(\sigma\sqrt{\beta}),f)$, respectively.}
\end{remark}
\begin{remark}
 After submission of this article, Yang and Ren published an article \cite{Yang2011} which permits to weaken the hypothesis in Theorem \ref{th_tail}: It is enough to assume that $E[L(\log L)^2] <\infty$. In our proof, one needs to replace the reference \cite{Kyp2004} by \cite{Yang2011} and use Theorem B of \cite{Bingham1974} instead of our Lemma \ref{lem_XlogX}, in order to obtain \eqref{eq_phiprime_phi}.
\end{remark}

The content of the paper is organised as follows: In Section \ref{sec_probability} we derive some preliminary results by probabilistic means. In Section \ref{sec_FKPP}, we recall a known relation between $Z_x$ and the so-called Fisher--Kolmo\-gorov--Petrovskii--Piskounov (FKPP) equation. Section \ref{sec_tail} is devoted to the proof of Theorem \ref{th_tail}, which draws on a Tauberian theorem and known asymptotics of travelling wave solutions to the FKPP equation. In Section \ref{sec_preliminaries} we review results about complex differential equations, singularity analysis of generating functions and continuous-time Galton--Watson processes. Those are needed for the proof of Theorem \ref{th_density}, which is done in Section \ref{sec_proofdensity}.

\section{First results by probabilistic methods}
\label{sec_probability}
The goal of this section is to prove
\begin{proposition}
\label{prop_heavytail}
Assume $c > c_0$ and $E[L^2] < \infty$. There exists a constant $C = C(x,c,L) > 0$, such that
\[P(Z_x > n) \ge \frac C {n^d}\quad \text{ for large }n.\]
\end{proposition}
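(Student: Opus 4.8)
The plan is to bound $P(Z_x > n)$ from below by exhibiting a single "favourable" scenario that produces many absorbed particles and has the right polynomial probability. The natural such scenario is the following: the initial individual, together with a designated line of descent, survives and drifts far to the left, reaching some level $-y$ (with $y$ large) before being absorbed; once a particle is that far from the barrier, the subtree rooted there behaves like an \emph{unabsorbed} supercritical branching Brownian motion with drift $c$, which survives with positive probability and, on survival, will eventually send $\Theta(\e^{\text{const}\cdot y})$ particles back across the barrier at $x$. Calibrating $y \asymp \log n$ then gives $Z_x \gtrsim n$, and the cost of the whole event is the probability that a drifting Brownian motion (the spine) reaches $-y$ before hitting $x$, which decays like $\e^{-\lambda y}$ for the appropriate root $\lambda$; with $y \asymp \log n$ this is exactly of order $n^{-d}$.

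Concretely, I would proceed as follows. First, recall (or record as a lemma) that for the branching Brownian motion with drift $c > c_0$ \emph{without} a barrier, started from $0$, the number of particles that ever reach level $x$ grows, on the survival event, at least like $\e^{\lc \cdot(\text{distance travelled})}$ in a suitable sense; more usefully, there is $p_0 > 0$ and for every $y$ a number $M(y)$ with $M(y) \to \infty$ such that, started from $-y$, the process absorbed at $x$ satisfies $P(Z \ge M(y)) \ge p_0$, and one can take $M(y) = \e^{a y}$ for some $a > 0$ (here the second-moment hypothesis $E[L^2] < \infty$ enters, via a standard $L^2$ or additive-martingale argument controlling the number of particles near a linear front). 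Second, decompose according to the trajectory of the spine: with probability bounded below by $c' \e^{-\lc y}$ (a hitting estimate for Brownian motion with drift $c$, killed at $x$, run for one exponential lifetime's worth of time, or more cleanly using the many-to-one lemma), there is an individual alive at some time before its first branching whose position is below $-y$ and which has not yet been absorbed. Third, apply the branching property at that individual and the estimate from the first step: conditionally, the subtree absorbs at least $M(y)$ particles with probability at least $p_0$. Combining, $P(Z_x \ge M(y)) \ge p_0 c' \e^{-\lc y}$, and choosing $y = \frac{1}{a}\log n$ yields $P(Z_x \ge n) \ge \frac{p_0 c'}{n^{\lc/a}}$. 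Finally, I would check that the exponents match: the correct statement is $n^{-d}$ with $d = \lcb/\lc$, so the geometric growth rate $a$ of $M(y)$ must be exactly $\lcb$ — i.e. the "optimal" way to get $\e^{\lcb y}$ absorbed particles starting from $-y$, which is consistent with the known front speed $\lcb$ of the unabsorbed process in the frame where the barrier recedes — so I would be careful to prove $M(y) \ge \e^{\lcb y - o(y)}$, possibly losing a subpolynomial factor that is harmless after adjusting constants.

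The main obstacle is the first step: getting a \emph{lower} bound of the right exponential order $\e^{\lcb y}$ on the number of particles the unabsorbed process sends past a level at distance $y$, uniformly enough to be usable. Upper bounds of this type are routine (first moment plus many-to-one), but the matching lower bound requires a second-moment argument on the number of particles following near-extremal trajectories, and it is here that $E[L^2] < \infty$ is genuinely needed; one has to choose the linear space-time region carefully so that the relevant additive martingale is $L^2$-bounded and has a strictly positive limit on an event of probability $\ge p_0$, and then relate "many particles along the front" to "many particles absorbed at $x$". A secondary technical point is handling the exponential lifetime and the drift $c \ne c_0$ cleanly in the hitting estimate; this is elementary but must be done so that the exponent produced is precisely $\lc$ (the smaller root), since $\e^{-\lc y}$ is larger than $\e^{-\lcb y}$ and gives the dominant, i.e. correct, lower bound.
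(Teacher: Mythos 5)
Your plan is structurally the paper's: decompose $P^0(Z_x>n)\ge P^{a}(Z_x>n)\,P^0(Z_{a,x}\ge 1)$ at a moving intermediate level $a$ and apply Paley--Zygmund to both factors, with $E[L^2]<\infty$ entering through the two-barrier second moments of Lemma~\ref{lem_2barriers} and Corollary~\ref{cor_2barriers}. The problem is that you have $\lc$ and $\lcb$ in the wrong places at both steps, and the errors do not cancel. The probability that some particle reaches $-y$ before $x$ is $\Theta(e^{-\lcb y})$, not $\gtrsim e^{-\lc y}$: the first moment $E^0[Z_{-y,x}]\sim C\,e^{-\lcb y}$ by Lemma~\ref{lem_2barriers}, so Markov already rules out your lower bound, and the matching $\Theta(e^{-\lcb y})$ lower bound is Paley--Zygmund via Corollary~\ref{cor_2barriers}\,a) and b). Conversely, the number absorbed at $x$ started from $-y$ has mean $E^{-y}[Z_x]=e^{\lc(x+y)}=\Theta(e^{\lc y})$ by Lemma~\ref{lem_expectation_Z}, so ``$P(Z\ge e^{\lcb y})\ge p_0>0$ for large $y$'' is impossible, again by Markov, since $\lcb>\lc$; the correct yield is $M(y)\asymp e^{\lc y}$.

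Even taking your exponents at face value, the final arithmetic fails: calibrating $y=\tfrac{1}{\lcb}\log n$ gives a cost $e^{-\lc y}=n^{-\lc/\lcb}=n^{-1/d}$, which for $d>1$ is \emph{stronger} than $n^{-d}$ and is in fact false by the matching upper tail in Theorem~\ref{th_density}; your remark that ``$a$ must be exactly $\lcb$'' does not solve $\lc/a=d=\lcb/\lc$. With the corrected exponents --- hit cost $\asymp e^{-\lcb y}$, yield $M(y)\asymp e^{\lc y}$, hence $y=\tfrac{1}{\lc}\log n$ --- one lands on $n^{-\lcb/\lc}=n^{-d}$ as required. To make the yield step rigorous, the paper passes to BBM with drift $-c$ and applies Paley--Zygmund with the two-barrier second moment in Corollary~\ref{cor_2barriers}\,c); this is the concrete place where $E[L^2]<\infty$ is used, and it replaces the additive-martingale $L^2$-argument you sketch.
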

This result is needed to assure that the constant $K$ in Theorem \ref{th_density} is non-zero. It is independent from Sections \ref{sec_FKPP} and \ref{sec_tail} and in particular from Theorem \ref{th_tail}. Its proof is entirely probabilistic and follows closely \cite{Aid2009}.

\subsection{Notation and preliminary remarks}
Our notation borrows from \cite{Kyp2004}. An {\em individual} is an element in the space of Ulam--Harris labels \[U=\bigcup_{n\in\N_0} \N^n,\]which is endowed with the ordering relations $\preceq$ and $\prec$ defined by
\[u \preceq v \iff \exists w\in U: v = uw\quad\tand\quad u\prec v \iff u\preceq v \tand u \ne v.\] The space of Galton--Watson trees is the space of subsets $\tree \subset U$, such that $\empt \in \tree$, $v\in \tree$ if $v\prec u$ and $u\in \tree$ and for every $u$ there is a number $L_u\in\N_0$, such that for all $j\in\N$, $uj\in \tree$ if and only if $j\le L_u$. Thus, $L_u$ is the number of children of the individual $u$.

Branching Brownian motion is defined on the filtered probability space $(\T,\F,(\F_t),P)$. Here, $\T$ is the space of Galton--Watson trees with each individual $u\in \tree$ having a mark $(\zeta_u,X_u)\in \R^+\times D(\R^+,\R\cup\{\Delta\})$, where $\Delta$ is a cemetery symbol and $D(\R^+,\R\cup\{\Delta\})$ denotes the Skorokhod space of cadlag functions from $\R^+$ to $\R\cup\{\Delta\}$. Here, $\zeta_u$ denotes the life length and $X_u(t)$ the position of $u$ at time $t$, or of its ancestor that was alive at time $t$. More precisely, for $v\in\tree$, let $d_v = \sum_{w\preceq v}\zeta_w$ denote the time of death and $b_v = d_v-\zeta_v$ the time of birth of $v$. Then $X_u(t) = \Delta$ for $t\ge d_u$ and if $v\preceq u$ is such that $t\in [b_v,d_v)$, then $X_u(t) = X_v(t)$.

The sigma-field $\F_t$ contains all the information up to time $t$, and $\F = \sigma\left(\bigcup_{t\ge 0} \F_t\right)$.

Let $y,c\in\R$ and $L$ be some random variable taking values in $\N_0\backslash\{1\}$. $P = P^{y,c,L}$ is the unique probability measure, such that, starting with a single individual at the point $y$,
\begin{itemize}[nolistsep, label=$-$]
\item each individual moves according to a Brownian motion with drift $c$ until an independent time $\zeta_u$ following an exponential distribution with parameter 1.
\item At the time $\zeta_u$ the individual dies and leaves $L_u$ offspring at the position where it has died, with $L_u$ being an independent copy of $L$.
\item Each child of $u$ repeats this process, all independently of one another and of the past of the process.
\end{itemize}
Note that often $c$ and $L$ are regarded as fixed and $y$ as variable. In this case, the notation $P^y$ is used. In the same way, expectation with respect to $P$ is denoted by $E$ or $E^y$.

A common technique in branching processes since \cite{LPP1995} is to enhance the space $\T$ by selecting an infinite genealogical line of descent from the ancestor $\empt$, called the {\em spine}. More precisely, if $T\in\T$ and $\tree$ its underlying Galton--Watson tree, then $\xi = (\xi_0,\xi_1,\xi_2,\ldots) \in U^{\N_0}$ is a \emph{spine} of $T$ if $\xi_0=\empt$ and for every $n\in\N_0$, $\xi_{n+1}$ is a child of $\xi_n$ in $\tree$. This gives the space
\[\Tt = \{(T,\xi)\in \T\times U^{\N_0}:\xi \text{ is a spine of }T\}\] of marked trees with spine and the sigma-fields $\Ft$ and $\Ft_t$. Note that if $(T,\xi)\in \Tt$, then $T$ is necessarily infinite.

Assume from now on that $m = E[L] - 1 \in (0,\infty)$. Let $N_t$ be the set of individuals alive at time $t$. Note that every $\Ft_t$-measurable function $f:\Tt\to\R$ admits a representation
\[f(T,\xi) = \sum_{u\in N_t} f_u(T)\Ind_{u\in\xi},\]
where $f_u$ is an $\F_t$-measurable function for every $u\in U$. We can therefore define a measure $\Pt$ on $(\Tt,\Ft,(\Ft_t))$ by
\begin{equation}
\label{eq_manyto1}
\int_{\Tt} f \,\dd\Pt = \e^{-m t} \int_{\T} \sum_{u\in N_t} f_u(T) P(\dd T).
\end{equation}
It is known \cite{Kyp2004} that this definition is sound and that $\Pt$ is actually a probability measure with the following properties:
\begin{itemize}[nolistsep, label=$-$]
\item Under $\Pt$, the individuals on the spine move according to Brownian motion with drift $c$ and die at an accelerated rate $m+1$, independent of the motion.
\item When an individual on the spine dies, it leaves a random number of offspring at the point where it has died, this number following the size-biased distribution of $L$. In other words, let $\widetilde{L}$ be a random variable with $E[f(\widetilde{L})] = E[f(L)L/(m+1)]$ for every positive measurable function $f$. Then the number of offspring is an independent copy of $\widetilde{L}$.
\item Amongst those offspring, the next individual on the spine is chosen uniformly. This individual repeats the behaviour of its parent.
\item The other offspring initiate branching Brownian motions according to the law $P$.
\end{itemize}
Seen as an equation rather than a definition, \eqref{eq_manyto1} also goes by the name of ``many-to-one lemma''.

\subsection{Branching Brownian motion with two barriers}
We recall the notation $P^y$ from the previous subsection for the law of branching Brownian motion started at $y\in\R$ and $E^y$ the expectation with respect to $P^y$. Recall the definition of $\Pt$ and define $\Pt^y$ and $\widetilde{E}^y$ analogously.

Let $a,b\in \R$ such that $y\in (a,b)$. Let $\tau = \tau_{a,b}$ be the (random) set of those individuals whose paths enter $(-\infty,a]\cup[b,\infty)$ and all of whose ancestors' paths have stayed inside $(a,b)$. For $u\in\tau$ we denote by $\tau(u)$ the first exit time from $(a,b)$ by $u$'s path, i.e.\
\[\tau(u) = \inf\{t\ge0: X_u(t) \notin (a,b)\} = \min\{t\ge0: X_u(t) \in \{a,b\}\},\] and set $\tau(u) = \infty$ for $u\notin\tau$. The random set $\tau$ is an (optional) stopping line in the sense of \cite{Cha1991}.

For $u\in\tau$, define $X_u(\tau) = X_u(\tau(u))$. Denote by $Z_{a,b}$ the number of individuals leaving the interval $(a,b)$ at the point $a$, i.e.\ \[Z_{a,b} = \sum_{u\in\tau} \Ind_{X_u(\tau) = a}.\]

\begin{lemma}
\label{lem_2barriers}
Assume $|c| > c_0$ and define $\rho = \sqrt{c^2 - c_0^2}$. Then
\[E^y[Z_{a,b}] = \e^{c(a-y)}\frac{\sinh((b-y)\rho)}{\sinh((b-a)\rho)}.\] If, furthermore, $V = E[L(L-1)] < \infty$, then
\[
\begin{split}
E^y[Z_{a,b}^2] = \frac{2V\e^{c(a-y)}}{\rho \sinh^3((b-a)\rho)}\Big[&\sinh((b-y)\rho)\int_a^y\e^{c(a-r)}\sinh^2((b-r)\rho)\sinh((r-a)\rho)\,\dd r\\
 + &\sinh((y-a)\rho)\int_y^b\e^{c(a-r)}\sinh^3((b-r)\rho)\,\dd r\Big] + E^y[Z_{a,b}].
\end{split}
\]
\end{lemma}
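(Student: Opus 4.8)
The plan is to derive, for each of the two moments, a linear second-order ODE on $(a,b)$ with Dirichlet data at the endpoints by conditioning on the first event affecting the root particle (exit from $(a,b)$ versus death inside it), and then to solve these ODEs explicitly. Throughout, let $\mathcal{A}u=\tfrac12 u''+cu'$ be the generator of Brownian motion with drift $c$, let $B$ be such a motion with $\mathbf{E}^y$ its law started from $y$, let $T$ be its first exit time from $(a,b)$, and recall $c_0^2=2m$ and $\rho=\sqrt{c^2-c_0^2}$.

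First I would treat $M(y)=E^y[Z_{a,b}]$. If the root particle exits $(a,b)$ before its exponential$(1)$ death time $\zeta$, it contributes $\mathbf{E}^y[\e^{-T}\Ind_{B_T=a}]=:v(y)$, and $v$ solves $(\mathcal{A}-1)v=0$ on $(a,b)$ with $v(a)=1$, $v(b)=0$; if instead it dies at time $\zeta<T$ at position $B_\zeta\in(a,b)$ and branches, the branching property contributes $(m+1)\,\mathbf{E}^y[\Ind_{\zeta<T}M(B_\zeta)]$. Adding the two contributions and applying the operator $(1-\mathcal{A})$---which annihilates $v$ and inverts the potential operator $\phi\mapsto\mathbf{E}^{\cdot}[\Ind_{\zeta<T}\phi(B_\zeta)]$---collapses the identity to $\mathcal{A}M+mM=0$ on $(a,b)$ with $M(a)=1$, $M(b)=0$. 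The characteristic equation $\lambda^2+2c\lambda+2m=0$ has roots $-c\pm\rho$, so $M$ is a combination of $\phi_1(y)=\e^{-cy}\sinh(\rho(b-y))$ and $\phi_2(y)=\e^{-cy}\sinh(\rho(y-a))$, and matching the boundary values gives $M(y)=\e^{c(a-y)}\sinh((b-y)\rho)/\sinh((b-a)\rho)$.

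For $g(y)=E^y[Z_{a,b}^2]$ I would run the same decomposition. In the branching case, conditioning in addition on $L$ and $B_\zeta$ yields $E[Z_{a,b}^2\mid\cdots]=(m+1)g(B_\zeta)+V\,M(B_\zeta)^2$, with the $V=E[L(L-1)]$ coming from ordered pairs of distinct children; applying $(1-\mathcal{A})$ as before now gives $\mathcal{A}g+mg=-VM^2$ on $(a,b)$ with $g(a)=1$, $g(b)=0$. Since $M$ solves the homogeneous equation with the same boundary data, $\psi:=g-M$ satisfies $\mathcal{A}\psi+m\psi=-VM^2$ with zero boundary data, hence is given by the Green's function of $\mathcal{A}+m$ built from $\phi_1$ (vanishing at $b$) and $\phi_2$ (vanishing at $a$): in self-adjoint form $(\e^{2cy}\psi')'+2m\e^{2cy}\psi=-2V\e^{2cy}M^2$, with the constant Wronskian $\e^{2cy}(\phi_1\phi_2'-\phi_1'\phi_2)=\rho\sinh((b-a)\rho)$, one gets
\[\psi(y)=\frac{2V}{\rho\sinh((b-a)\rho)}\Big[\phi_1(y)\int_a^y\phi_2(r)\,\e^{2cr}M(r)^2\,\dd r+\phi_2(y)\int_y^b\phi_1(r)\,\e^{2cr}M(r)^2\,\dd r\Big].\]
Now $\e^{2cr}M(r)^2=\e^{2ca}\sinh^2((b-r)\rho)/\sinh^2((b-a)\rho)$; substituting this and the explicit $\phi_1,\phi_2$, and adding back $M(y)$, gives exactly the stated formula for $E^y[Z_{a,b}^2]$.

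The hard part will be the a priori finiteness of $g$, needed so that the one-step relation is an identity rather than an inequality: I would obtain it by first replacing $Z_{a,b}$ by its truncation to the first $N$ generations, for which the analogous recursion holds with finite terms and with solution dominated by that of the ODE, and then passing to the limit $N\to\infty$ by monotone convergence. Beyond this, one must justify the conditioning at the first death via the branching (stopping-line) property of \cite{Cha1991}, and carry out the otherwise routine variation-of-parameters bookkeeping; uniqueness of the two ODE solutions is automatic because $\mathcal{A}u+mu=0$ has no nontrivial solution vanishing at both $a$ and $b$ (as $\phi_1,\phi_2$ are linearly independent). Alternatively, the first-moment formula can be read off from the many-to-one lemma \eqref{eq_manyto1} together with optional stopping at the line $\tau$, but the one-step approach disposes of both moments simultaneously.
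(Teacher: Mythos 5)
Your proposal is correct, and it reaches the two formulae by a genuinely different route from the paper. The paper works on the spine: it applies the stopping-line version of the many-to-one lemma~\eqref{eq_manyto1} together with Girsanov's theorem to reduce $E^y[Z_{a,b}]$ to a killed Brownian exit statistic (Borodin--Salminen, formula 1.3.0.5), and then, conditioning successively on the spine's path, its fission times, and its offspring numbers, reduces $E^y[Z_{a,b}^2]$ to a weighted local-time expectation $W^y[\e^{-\rho^2 T/2}L_T^r,\ B_T=a]$ (formula 1.3.3.8). You instead decompose on the first event affecting the root, obtain the Dirichlet problems $\tfrac12 M''+cM'+mM=0$ and $\tfrac12 g''+cg'+mg=-VM^2$ on $(a,b)$ with $M(a)=g(a)=1$, $M(b)=g(b)=0$, and solve by characteristic equations and variation of parameters. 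The two computations produce the same object: your kernel $\phi_1(y\vee r)\phi_2(y\wedge r)/(\rho\sinh((b-a)\rho))$ is exactly the local-time density the paper reads off from the table. Your route is more elementary and self-contained (no spine, no change of measure, no external Brownian formulae needed, both moments dispatched by one mechanism), but, as you yourself flag, it produces a renewal identity in $[0,\infty]$ and so needs an a priori finiteness argument for $g$; your sketch --- truncate by generation, show by induction that the truncated second moment is dominated by the ODE solution, pass to the limit by monotone convergence, then invoke uniqueness of the Dirichlet problem, which holds because $\rho>0$ is real for $|c|>c_0$ so $\phi_1,\phi_2$ are genuinely independent --- is the right way to close this. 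The paper's spine calculation sidesteps the finiteness issue entirely, at the cost of the measure-change machinery. One small point worth recording explicitly when you write it up: the boundary values $M(a+)=g(a+)=1$ and $M(b-)=g(b-)=0$ follow because $T\downarrow 0$ as $y$ approaches either endpoint and the killing rate is bounded, so the branching contribution in the renewal identity vanishes in that limit.
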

\begin{proof}
On the space $\Tt$ of marked trees with spine, define the random variable $I$ by $I = i$ if $\xi_i \in \tau$ and $I=\infty$ otherwise. For an event $A$ and a random variable $Y$ write $E[Y, A]$ instead of $E[Y \Ind_A]$. Then
\[E^y[Z_{a,b}] = E^y\Big[\sum_{u\in\tau} \Ind_{X_u(\tau)=a}\Big] = \Et^y[\e^{m \tau(\xi_I)}, I < \infty, X_{\xi_I}(\tau) = a]
\] by the many-to-one lemma extended to optional stopping lines (see \cite{BK2004}, Lemma 14.1 for a discrete version). But since the spine follows Brownian motion with drift $c$, we have $I < \infty$, $\Pt$-a.s.\ and the above quantity is therefore equal to
\[W^{y,c}[\e^{m T}, B_T = a],\]
where $W^{y,c}$ is the law of standard Brownian motion with drift $c$ started at $y$, $(B_t)_{t\ge 0}$ the canonical process and $T=T_{a,b}$ the first exit time from $(a,b)$ of $B_t$. By Girsanov's theorem, and recalling that $m=c_0^2/2$, this is equal to
\[
W^y[\e^{c(B_T-y)-\frac 1 2 (c^2 - c_0^2) T}, B_T = a],
\] where $W^y = W^{y,0}$. Evaluating this expression (\cite{BS2002}, p.\ 212, Formula 1.3.0.5) gives the first equality.

For $u\in U$, let $\Theta_u$ be the operator that maps a tree in $\T$ to its sub-tree rooted in $u$. Denote further by $C_u$ the set of $u$'s children, i.e.\ $C_u = \{uk: 1\le k \le L_u\}$. Then note that for each $u\in \tau$ we have
\[Z_{a,b} = 1 + \sum_{v\prec u} \mathop{\sum_{w\in C_v}}_{w\npreceq u} Z_{a,b}\circ \Theta_w,\]
hence
\begin{equation}
\label{eq_decomp}
\begin{split}
E^y[Z_{a,b}^2] &= E^y\Big[\sum_{u\in\tau}\Ind_{X_u(\tau)=a}Z_{a,b}\Big]\\
&= E^y[Z_{a,b}] + \Pt^y\Bigg[\e^{m \tau(\xi_I)} \sum_{v \prec \xi_I} \mathop{\sum_{w\in L_v}}_{w\npreceq \xi_I} Z_{a,b}\circ \Theta_w,\ X_{\xi_I}(\tau) = a\Bigg].
\end{split}
\end{equation}
Define the $\sigma$-algebras
\begin{align*}
\G &= \sigma(X_{\xi_I}(t);t\ge0),\\
\H &= \G \vee \sigma(\zeta_v;v\prec \xi_I),\\
\mathcal{I} &= \H \vee \sigma(\xi, I, (L_v;v\prec \xi_I)),
\end{align*}
such that $\G$ contains the information about the path of the spine up to the individual that quits $(a,b)$ first, $\H$ adds to $\G$ the information about the fission times on the spine and $\mathcal I$ adds to $\H$ the information about the individuals of the spine and the number of their children. Now, conditioning on $\mathcal I$ and using the strong branching property, the second term in the last line of \eqref{eq_decomp} is equal to
\[\Pt^y\Bigg[\e^{m \tau(\xi_I)}\sum_{v \prec \xi_I} (L_v-1) E^{X_v(d_v-)}[Z_{a,b}],\ X_{\xi_I}(\tau) = a\Bigg]\]
(recall that $d_v$ is the time of death of $v$). Conditioning on $\H$ and noting the fact that $L_v$ follows the size-biased law of $L$ for an individual $v$ on the spine, yields
\[\Pt^y\Bigg[\e^{m \tau(\xi_I)}\sum_{v \prec \xi_I} \frac{V}{m+1}E^{X_{\xi_I}(d_v)}[Z_{a,b}],\ X_{\xi_I}(\tau) = a\Bigg].\]
Finally, since under $\Pt$ the fission times on the spine form a Poisson process of intensity $m+1$, conditioning on $\G$ and applying Girsanov's theorem yields
\[
\begin{split}
&W^y\left[\e^{c(B_T-y)-\frac 1 2 \rho^2 T}\int_0^T V E^{B_t}[Z_{a,b}] \,\dd t,B_T = a\right]\\
&\hspace{2cm} = V \e^{c(a-y)} \int_a^b E^r[Z_{a,b}] W^y\left[\e^{-\frac 1 2 \rho^2 T} L_T^r, B_T = a\right] \,\dd r,
\end{split}
\]
where $L_T^r$ is the local time of $(B_t)$ at the time $T$ and the point $r$. The last expression can be evaluated explicitly (\cite{BS2002}, p.\ 215, Formula 1.3.3.8) and gives the desired equality.
\end{proof}

\begin{corollary}
\label{cor_2barriers}
Under the assumptions of Lemma \ref{lem_2barriers}, for each $b>0$ there are positive constants $C_b^{(1)}$, $C_b^{(2)}$, such that as $a\to-\infty$,
\begin{enumerate}[nolistsep, label=\alph*)]
\item $E^0[Z_{a,b}] \sim C_b^{(1)} \e^{(c+\rho)a}$,
\item if $c > c_0$, $E^0[Z_{a,b}^2] \sim C_b^{(2)}\e^{(c+\rho)a}$ and
\item if $c < -c_0$, $E^0[Z_{a,b}^2] \sim C_b^{(2)}\e^{2(c+\rho)a}$.
\end{enumerate}
\end{corollary}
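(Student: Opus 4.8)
The plan is to obtain all three asymptotics by extracting the dominant behaviour as $a\to-\infty$ from the closed-form expressions in Lemma~\ref{lem_2barriers}, with $y=0$. Throughout, recall $\rho=\sqrt{c^2-c_0^2}>0$ under the standing assumption $|c|>c_0$, so $\sinh((b-a)\rho)\sim\tfrac12\e^{(b-a)\rho}$ and, for $b$ fixed, $\e^{c(a-0)}/\sinh((b-a)\rho)\sim 2\e^{-b\rho}\e^{(c+\rho)a}$ as $a\to-\infty$ (note $c+\rho$ may have either sign, but this is just an exact rewriting). For part a) I would simply plug $y=0$ into the first formula of Lemma~\ref{lem_2barriers}: the factor $\sinh((b-0)\rho)=\sinh(b\rho)$ does not depend on $a$, so $E^0[Z_{a,b}]=\e^{ca}\sinh(b\rho)/\sinh((b-a)\rho)\sim 2\e^{-b\rho}\sinh(b\rho)\,\e^{(c+\rho)a}$, giving $C_b^{(1)}=2\e^{-b\rho}\sinh(b\rho)=1-\e^{-2b\rho}>0$.

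For parts b) and c) I would analyse the second-moment formula term by term. Write $E^0[Z_{a,b}^2]=A(a)+B(a)+E^0[Z_{a,b}]$, where, after setting $y=0$,
\[
A(a)=\frac{2V\e^{ca}}{\rho\sinh^3((b-a)\rho)}\sinh(b\rho)\int_a^0\e^{-cr}\sinh^2((b-r)\rho)\sinh((r-a)\rho)\,\dd r
\]
and $B(a)$ is the other integral term (which, for $y=0$, has prefactor $\sinh((0-a)\rho)=\sinh(-a\rho)\sim\tfrac12\e^{-a\rho}$ and integral $\int_0^b\e^{-cr}\sinh^3((b-r)\rho)\,\dd r$, a constant in $a$). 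The prefactor of $B(a)$ is thus $\sim\frac{2V\e^{ca}}{\rho}\cdot\frac{\tfrac12\e^{-a\rho}}{\tfrac18\e^{3(b-a)\rho}}\cdot(\text{const})= (\text{const})\,\e^{ca}\e^{-a\rho}\e^{-3(b-a)\rho}=(\text{const})\,\e^{(c+2\rho)a}\e^{-3b\rho}$, i.e.\ $B(a)=\Theta(\e^{(c+2\rho)a})$. For $A(a)$ the integrand's $a$-dependence sits in $\sinh((r-a)\rho)\sim\tfrac12\e^{(r-a)\rho}$ uniformly for $r$ in compact sets bounded away from $a$; since $r$ ranges over $[a,0]$ and the integrand is dominated near $r=0$ (where $\sinh((b-r)\rho)$ is largest and, crucially, $\e^{(r-a)\rho}$ is largest), I expect $\int_a^0\e^{-cr}\sinh^2((b-r)\rho)\sinh((r-a)\rho)\,\dd r\sim\e^{-a\rho}\int_a^0\e^{-cr}\sinh^2((b-r)\rho)\,\tfrac12\e^{r\rho}\,\dd r$, and the remaining integral $\int_a^0\e^{(\rho-c)r}\sinh^2((b-r)\rho)\,\dd r$ converges as $a\to-\infty$ precisely when $\rho-c>0$, i.e.\ when $c<\rho$; since $\rho=\sqrt{c^2-c_0^2}<|c|$, this holds exactly when $c<0$, i.e.\ in case c). In that regime $A(a)$ has prefactor $\sim(\text{const})\,\e^{ca}\e^{-3(b-a)\rho}\cdot\e^{-a\rho}=(\text{const})\,\e^{(c+2\rho)a}$ and converging integral, so $A(a)=\Theta(\e^{(c+2\rho)a})$, matching $B(a)$; since $c<0$ forces $2(c+\rho)<c+2\rho$ is false—rather $2(c+\rho)=2c+2\rho$ versus $c+2\rho$, and $2c+2\rho-(c+2\rho)=c<0$, so indeed $\e^{(c+2\rho)a}$ dominates $\e^{2(c+\rho)a}$? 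No: as $a\to-\infty$, the \emph{larger} exponent wins is wrong—$\e^{\alpha a}\to\infty$ faster for \emph{smaller} $\alpha$. Here $c+2\rho>2c+2\rho$ iff $0>c$, so in case c) the exponent $2(c+\rho)$ is the smaller one and $\e^{2(c+\rho)a}$ dominates; thus I must double-check which term actually realises the exponent $2(c+\rho)$. I expect that in case c) the $\int_a^0$ in $A(a)$, when $\rho-c<0$ fails (it holds, $c<0$), still converges, so the dominant growth $\e^{2(c+\rho)a}$ must instead come from a more careful expansion of $A(a)$ where the $\sinh((r-a)\rho)$ cannot simply be replaced by its exponential tail near the \emph{lower} endpoint $r=a$; the contribution of the region $r\approx a$ contributes the $\e^{2\rho a}$-type blow-up. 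In case b) ($c>c_0>0$, hence $c>\rho>0$, hence $\rho-c<0$): the integral $\int_a^0\e^{(\rho-c)r}(\cdots)\,\dd r$ now \emph{diverges} like $\e^{(\rho-c)a}$ as $a\to-\infty$, which injects an extra factor $\e^{(\rho-c)a}$ into $A(a)$, yielding $A(a)=\Theta(\e^{ca}\e^{-3(b-a)\rho}\e^{-a\rho}\e^{(\rho-c)a})=\Theta(\e^{(c+2\rho)a}\e^{(\rho-c)a}\cdot\e^{-a\rho}\cdot\e^{a\rho}?)$—the bookkeeping needs to be done cleanly, but the upshot I anticipate is $A(a)=\Theta(\e^{(c+\rho)a})$ in case b), which then dominates $B(a)=\Theta(\e^{(c+2\rho)a})$ and the linear term $E^0[Z_{a,b}]=\Theta(\e^{(c+\rho)a})$, giving $E^0[Z_{a,b}^2]\sim C_b^{(2)}\e^{(c+\rho)a}$; and in case c) a symmetric computation gives $E^0[Z_{a,b}^2]\sim C_b^{(2)}\e^{2(c+\rho)a}$.

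Concretely, then, I would proceed as follows. First, substitute $y=0$ into both formulas of Lemma~\ref{lem_2barriers}. Second, prove a) by the one-line estimate above. Third, for the second moment, split into $A(a)+B(a)+E^0[Z_{a,b}]$ as above; show $B(a)\sim(\text{explicit const})\,\e^{(c+2\rho)a}$ by elementary asymptotics of $\sinh$; and carry out the delicate analysis of $A(a)$ by changing variables $r=a+s$ (so $s\in[0,-a]$) to expose the integral $\int_0^{-a}\e^{-c(a+s)}\sinh^2((b-a-s)\rho)\sinh(s\rho)\,\dd s$, whose large-$(-a)$ behaviour is governed by whether the exponent $\rho-c$ (equivalently $2\rho$ versus $c+\rho$) is positive. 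Fourth, compare the three terms $A(a),B(a),E^0[Z_{a,b}]$ in each sign regime of $c$ and collect the dominant one, reading off positivity of $C_b^{(2)}$ from $V=E[L(L-1)]>0$ (which holds since $P(L=1)=0$ and $m>0$). The main obstacle is precisely the asymptotic evaluation of $A(a)$: one must identify the right scale, treat the boundary layer near $r=a$ versus the bulk near $r=0$, and handle the sign change of $\rho-c$ that distinguishes cases b) and c)—rather than the other two terms, which are routine. A clean way to sidestep some of the delicacy is to note that the integrals $\int_a^b E^r[Z_{a,b}]W^y[\e^{-\rho^2 T/2}L_T^r,B_T=a]\,\dd r$ arising in the proof of Lemma~\ref{lem_2barriers} already decompose the second moment over the branching point $r$; dominated convergence (after dividing by the conjectured rate $\e^{(c+\rho)a}$ or $\e^{2(c+\rho)a}$) together with the explicit local-time density should then yield both the existence of the limit and a formula for $C_b^{(2)}$, which is perhaps the more robust route and the one I would ultimately follow.
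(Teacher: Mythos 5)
Your overall plan --- substitute $y=0$ in Lemma~\ref{lem_2barriers}, split the second moment into $A(a)+B(a)+E^0[Z_{a,b}]$, and read off the leading exponent of each piece --- is the right one, and part~a) is correct. But a transcription error in the integrands derails the exponent bookkeeping and is the source of all the confusion you then try (and fail) to reason your way out of. The factor inside both $r$-integrals of Lemma~\ref{lem_2barriers} is $\e^{c(a-r)}=\e^{ca}\e^{-cr}$, not $\e^{-cr}$; the extra $\e^{ca}$ is \emph{not} a constant in $a$. With it restored, the $r$-integral in $B(a)$ equals $\e^{ca}\int_0^b\e^{-cr}\sinh^3((b-r)\rho)\,\dd r$, and since the outer prefactor contributes another $\e^{ca}$ and $\sinh(-a\rho)/\sinh^3((b-a)\rho)\sim 4\e^{-3b\rho}\e^{2a\rho}$, one gets $B(a)$ of order $\e^{2(c+\rho)a}$ --- not $\e^{(c+2\rho)a}$ as you computed. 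The same missing $\e^{ca}$ plagues $A(a)$: after the substitution $r=a+s$ and the identity $\e^{-cs}\sinh(s\rho)=\tfrac12\bigl(\e^{-(c+\rho)s}-\e^{-(c+3\rho)s}\bigr)$, the $r$-integral in the \emph{correct} $A(a)$ reduces (up to the bounded factor $(1-\e^{-2(b-a-s)\rho})^2$) to $\tfrac18\e^{2(b-a)\rho}\int_0^{-a}\bigl(\e^{-(c+\rho)s}-\e^{-(c+3\rho)s}\bigr)\,\dd s$, which converges to a positive constant when $c+\rho>0$ (case~b) and grows like $\e^{(c+\rho)a}$ when $c+\rho<0$ (case~c), the divergence coming from $s$ near $-a$, i.e.\ from $r\approx 0$, not from $r\approx a$ as you speculate. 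Assembling the prefactor then gives $A(a)$ of order $\e^{(c+\rho)a}$ in case~b) and of order $\e^{2(c+\rho)a}$ in case~c).

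With the exponents corrected, the comparison is clean and there is nothing left to ``double-check'': in~b) the exponents present are $c+\rho$ (from $A$ and $E^0[Z_{a,b}]$) and $2(c+\rho)$ (from $B$), and since $c+\rho>0$ the smaller one wins as $a\to-\infty$; in~c) they are $c+\rho$ (from $E^0[Z_{a,b}]$) and $2(c+\rho)$ (from $A$ \emph{and} $B$), and since $c+\rho<0$ the smaller exponent $2(c+\rho)$ wins. Positivity of $C_b^{(2)}$ follows from $V=E[L(L-1)]>0$ (which holds since $P(L=1)=0$ and $E[L]>1$) and the positivity of the integrals. By contrast, carrying your $\e^{-cr}$ version through gives $A(a)$ of order $\e^{\rho a}$ in case~b) and $\e^{(c+2\rho)a}$ in case~c), and $B(a)$ of order $\e^{(c+2\rho)a}$: neither the $\e^{(c+\rho)a}$ of~b) nor the $\e^{2(c+\rho)a}$ of~c) would emerge as the dominant term, which is exactly the mismatch you flag mid-argument and then never resolve. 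The ``robust alternative'' via the intermediate integral from the proof of Lemma~\ref{lem_2barriers} together with dominated convergence is plausible in spirit, but you do not carry it out, and it would not rescue the miscounted exponents anyway.
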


The following result is well known and is only included for completeness. We emphasize that the only moment assumption here is $m = E[L]-1 \in (0,\infty)$. Recall that $Z_x$ denotes the number of particles absorbed at $x$ of a BBM started at the origin. For $|c|\ge c_0$, define $\lc$ to be the smaller root of $\lambda^2 - 2c + c_0^2$, thus $\lc = c - \sqrt{c^2-c_0^2}$.

\begin{lemma}
\label{lem_expectation_Z}
Let $x>0$.
\begin{itemize}[nolistsep, label=$-$]
\item If $|c|\ge c_0$, then $E[Z_x] = \e^{\lc x}$.
\item If $|c| < c_0$, then $E[Z_x] = +\infty$.
\end{itemize}
\end{lemma}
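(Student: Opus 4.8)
The plan is to reproduce this (classical) identity by exactly the many-to-one computation carried out in the proof of Lemma~\ref{lem_2barriers}, now with a single barrier at $x$. Let $\tau$ be the stopping line consisting of the individuals whose path first reaches $x$ while all their ancestors' paths have stayed in $(-\infty,x)$, and for $u\in\tau$ let $\tau(u)$ be the corresponding first hitting time of $x$; by definition $Z_x=\sum_{u\in\tau}1$. Applying the many-to-one lemma for optional stopping lines exactly as in Lemma~\ref{lem_2barriers} (the only change being that, there being no lower barrier, no constraint on the exit point appears, and $\{I<\infty\}$ now means that the spine reaches $x$) I get
\[
E^y[Z_x]=\Et^y[\e^{m\tau(\xi_I)}, I<\infty]=W^{y,c}[\e^{mT_x}, T_x<\infty],
\]
where $W^{y,c}$ denotes the law of Brownian motion with drift $c$ started at $y$, $(B_t)_{t\ge0}$ the canonical process and $T_x$ its first hitting time of $x$. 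Note that this step uses no moment assumption beyond $m\in(0,\infty)$, and that the identity is understood in $[0,+\infty]$.

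Next I would remove the drift via Girsanov's theorem, as in Lemma~\ref{lem_2barriers}, but with some care since $T_x$ may be infinite and $\e^{mT_x}$ is unbounded. Localising at $T_x\wedge n$, using that the Radon--Nikodym density of $W^{y,c}$ with respect to $W^y=W^{y,0}$ on $\F_t$ is $\e^{c(B_t-y)-\frac{c^2}{2}t}$, and noting $B_{T_x}=x$ on $\{T_x\le n\}$, one obtains
\[
W^{y,c}[\e^{mT_x}, T_x\le n]=\e^{c(x-y)}\,W^y[\e^{(m-c^2/2)T_x}, T_x\le n].
\]
Letting $n\to\infty$, recalling $m=c_0^2/2$ and that $T_x<\infty$ holds $W^y$-a.s., and applying monotone convergence on both sides, this becomes
\[
E^y[Z_x]=\e^{c(x-y)}\,W^y\big[\e^{-\frac12(c^2-c_0^2)T_x}\big].
\]

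The proof is then completed by inserting the Laplace transform of the Brownian hitting time: under $W^y$ one has $W^y[\e^{-\theta T_x}]=\e^{-(x-y)\sqrt{2\theta}}$ for $\theta\ge0$, whereas $W^y[\e^{\theta T_x}]=+\infty$ for every $\theta>0$ because $T_x$ follows a one-sided stable law of index $\tfrac12$ and hence has no positive exponential moment. If $|c|\ge c_0$, then $\theta:=\tfrac12(c^2-c_0^2)\ge0$ and $E^y[Z_x]=\e^{c(x-y)}\e^{-(x-y)\sqrt{c^2-c_0^2}}=\e^{\lc(x-y)}$; taking $y=0$ gives $E[Z_x]=\e^{\lc x}$. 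If $|c|<c_0$, then $\tfrac12(c^2-c_0^2)<0$ and the right-hand side equals $+\infty$, hence $E[Z_x]=+\infty$.

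The only genuinely delicate point I anticipate is the passage through the possibly infinite hitting time $T_x$: one must make sure that the many-to-one identity for $\tau$ remains valid although $\tau$ need not be dissecting — when $c<0$ the spine reaches $x$ only with probability $\e^{2c(x-y)}<1$, and on the complementary event $I=\infty$ and the corresponding term drops out, which is the same phenomenon already present in Lemma~\ref{lem_2barriers} — and that the localised Girsanov-plus-monotone-convergence step above is legitimate. One could alternatively try to let $a\to-\infty$ in Lemma~\ref{lem_2barriers}, but this reaches only the case $|c|>c_0$, whereas the route above covers $|c|=c_0$ and $|c|<c_0$ uniformly; apart from these points everything is a direct evaluation.
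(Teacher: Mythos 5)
Your proof is correct and takes essentially the same route as the paper: the same single-barrier stopping line, the same many-to-one identity, Girsanov to remove the drift, and then the Laplace transform of the driftless hitting time $T_x$. The only difference is that you spell out the localisation at $T_x\wedge n$ plus monotone convergence and compute the hitting-time transform by hand, whereas the paper simply invokes Girsanov and cites a table formula from Borodin--Salminen.
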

\begin{proof}
We proceed similarly to the first part of Lemma \ref{lem_2barriers}. Define the (optional) stopping line $\tau$ of the individuals whose paths enter $[x,\infty)$ and all of whose ancestors' paths have stayed inside $(-\infty,x)$. Define $I$ as in the proof of Lemma \ref{lem_2barriers}. By the stopping line version of the many-to-one lemma we have
\[E[Z_x] = E[\sum_{u\in\tau} 1] = \Et[\e^{m\tau(\xi_I)}, I<\infty].\] By Girsanov's theorem, this equals
\[W[\e^{cx - \frac 1 2 (c^2 - c_0^2) T_x}, T_x<\infty],\] where $W$ is the law of standard Brownian motion started at 0 and $T_x$ is the first hitting time of $x$. The result now follows from \cite{BS2002}, p.\ 198, Formula 1.2.0.1.
\end{proof}

\subsection{Proof of Proposition \ref{prop_heavytail}}
By hypothesis, $c > c_0$, $E[L^2] < \infty$ and the BBM starts at the origin. Let $x>0$ and let $\tau = \tau_x$ be the stopping line of those individuals hitting the point $x$ for the first time. Then $Z_x = |\tau_x|$.

Let $a < 0$ and $n\in\N$. By the strong branching property,
\[P^0(Z_x > n) \ge P^0(Z_x > n \mid Z_{a,x} \ge 1) P^0(Z_{a,x} \ge 1) \ge P^a(Z_x > n) P^0(Z_{a,x} \ge 1).\] If $P^0_-$ denotes the law of branching Brownian motion started at the point $0$ with drift $-c$, then
\[P^a(Z_x > n) = P^0_-(Z_{a-x} > n) \ge P^0_-(Z_{a-x,1} > n).\] In order to bound this quantity, we choose $a = a_n$ in such a way that $n = \frac 1 2 E^0_-[Z_{a_n-x,1}].$ By Corollary \ref{cor_2barriers} a), c) (applied with drift $-c$) and the Paley--Zygmund inequality, there is then a constant $C_1 > 0$, such that
\[P^0_-(Z_{a_n-x,1} > n) \ge \frac 1 4 \frac{E^0_-[Z_{a_n-x,1}]^2}{E^0_-[Z_{a_n-x,1}^2]} \ge C_1\quad\text{ for large }n.\]
Furthermore, by Corollary \ref{cor_2barriers} a) (applied with drift $-c$), we have
\[\frac 1 2 C^{(1)}_1 \e^{-\lc (a_n-x)} \sim n,\quad \tas n\to\infty,\] and therefore $a_n = -(1/\lc)\log n + O(1)$. Again by the Paley--Zygmund inequality and Corollary \ref{cor_2barriers} a), b) (applied with drift $c$), there exists $C_2 > 0$, such that for large $n$,
\[P^0(Z_{a_n,x} \ge 1) = P^0(Z_{a_n,x} > 0) \ge \frac{E^0[Z_{a_n,x}]^2}{E^0[Z_{a_n,x}^2]} \ge \frac{(C^{(1)}_x)^2}{2C^{(2)}_x} \e^{\lcb a_n} \ge \frac{C_2}{n^d}.\] This proves the proposition with $C=C_1C_2$.

\section{The FKPP equation}
\label{sec_FKPP}
As was already observed by Neveu \cite{Nev1988}, the translational invariance of Brownian motion and the strong branching property immediately imply that  $Z=(Z_x)_{x\ge0}$ is a homogeneous continuous-time Galton--Watson process (for an overview to these processes, see \cite{AN1972}, Chapter\;III or \cite{Har1963}, Chapter\;V). There is therefore an infinitesimal generating function
\begin{equation}
\label{eq_def_a}
a(s) = \alpha\left(\sum_{n=0}^\infty p_ns^n - s\right),\quad \alpha > 0,\ p_1 = 0,
\end{equation}
associated to it. It is a strictly convex function on $[0,1]$, with $a(0) \ge 0$ and $a(1)\le 0$. Its probabilistic interpretation is
\[\alpha = \lim_{x\to 0} \tfrac 1 x P(Z_x \ne 1)\quad\tand\quad p_n = \lim_{x\to0} P(Z_x = n | Z_x \ne 1),\] hence $q_n = \alpha p_n$ for $n\in\N_0\backslash\{1\}$. Note that with no further conditions on $c$ and $L$, the sum $\sum_{n\ge0} p_n$ need not necessarily be $1$, i.e.\ the rate $\alpha p_\infty$, where $p_\infty = 1-\sum_{n\ge0} p_n$, with which the process jumps to $+\infty$, may be positive.

We further define $F_x(s) = E[s^{Z_x}]$, which is linked to $a(s)$ by Kolmogorov's forward and backward equations (\cite{AN1972}, p.\;106 or \cite{Har1963}, p.\;102):
\begin{align}
\label{eq_fwd}
\frac{\partial}{\partial x} F_x(s) &= a(s)\frac{\partial}{\partial s} F_x(s) & &\text{(forward equation)}\\
\label{eq_bwd}
\frac{\partial}{\partial x} F_x(s) &= a[F_x(s)] & &\text{(backward equation)}
\end{align}
The forward equation implies that if $a(1)=0$ and $\phi(x) = E[Z_x] = \frac{\partial}{\partial s} F_x(1-)$, then $\phi'(x) = a'(1)\phi(x)$, whence $E[Z_x] = \e^{a'(1)x}$. On the other hand, if $a(1) < 0$, then the process jumps to $\infty$ with positive rate, hence $E[Z_x] = \infty$ for all $x > 0$.

The next lemma is an extension of a result which is stated, but not proven, in \cite{Nev1988}, Equation (1.1). According to Neveu, it is due to A.\ Joffe. To the knowledge of the author, no proof of this result exists in the current literature, which is why we prove it here.

\begin{lemma}
\label{lem_GW_psi}
Let $(Y_t)_{t\ge0}$ be a homogeneous Galton--Watson process started at 1, which may explode and may jump to $+\infty$ with positive rate. Let $u(s)$ be its infinitesimal generating function and $F_t(s) = E[s^{Y_t}]$. Let $q$ be the smallest zero of $u(s)$ in $[0,1]$.
\begin{enumerate}[nolistsep]
\item If $q < 1$, then there exists $t_-\in\R\cup\{-\infty\}$ and a strictly decreasing smooth function $\psi_-:(t_-,+\infty)\to (q,1)$ with $\lim_{t\to t_-}\psi_-(t) = 1$ and $\lim_{t\to \infty}\psi_-(t) = q$, such that on $(q,1)$ we have $u = \psi_-' \circ \psi_-^{-1}$, $F_t(s) = \psi_-(\psi_-^{-1}(s) + t)$.
\item If $q > 0$, then there exists $t_+\in\R\cup\{-\infty\}$ and a strictly increasing smooth function $\psi_+:(t_+,+\infty)\to (0,q)$ with $\lim_{t\to t_+}\psi_+(t) = 0$ and $\lim_{t\to \infty}\psi_+(t) = q$, such that on $(0,q)$ we have $u = \psi_+' \circ \psi_+^{-1}$, $F_t(s) = \psi_+(\psi_+^{-1}(s) + t)$.
\end{enumerate}
The functions $\psi_-$ and $\psi_+$ are unique up to translation.

Moreover, the following statements are equivalent:
\begin{itemize}[nolistsep, label=$-$]
\item For all $t > 0$, $Y_t < \infty$ a.s.
\item $q = 1$ or $t_- = -\infty$.
\end{itemize}
\end{lemma}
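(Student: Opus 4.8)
The plan is to reduce the whole statement to an ODE analysis for the function $\psi_-$ (and symmetrically $\psi_+$), exploiting the separation of variables that the forward equation \eqref{eq_fwd} provides. First I would establish parts 1 and 2. Assume $q<1$. On the open interval $(q,1)$ the infinitesimal generating function $u$ is strictly negative: indeed $u$ is strictly convex on $[0,1]$ with $u(q)=0$ and $u(1)=-\alpha p_\infty \le 0$, and strict convexity forbids $u$ from vanishing again on $(q,1)$, so $u<0$ there. Hence $1/u$ is a smooth, strictly negative function on $(q,1)$, and I can define the antiderivative $G(s) = \int_{s_0}^s \frac{\dd r}{u(r)}$ for a fixed $s_0\in(q,1)$; $G$ is a strictly decreasing smooth bijection from $(q,1)$ onto an interval $(t_-, +\infty)$ where $t_- = \lim_{s\uparrow 1} G(s) \in \R\cup\{-\infty\}$ (the limit at the $q$-end is $+\infty$ because $1/u$ blows up like $1/(u'(q)(s-q))$, which is non-integrable). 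Set $\psi_- = G^{-1}$, up to an additive constant in $G$ this is unique up to translation. Then $\psi_-$ is strictly decreasing, maps $(t_-,+\infty)$ onto $(q,1)$ with the stated boundary limits, and by construction $\psi_-'(\psi_-^{-1}(s)) = u(s)$, i.e.\ $u = \psi_-'\circ\psi_-^{-1}$. For the identity $F_t(s) = \psi_-(\psi_-^{-1}(s)+t)$, fix $s\in(q,1)$ and define $g(t) = \psi_-(\psi_-^{-1}(s)+t)$; then $g(0)=s$ and $g'(t) = \psi_-'(\psi_-^{-1}(s)+t) = u(\psi_-(\psi_-^{-1}(s)+t)) = u(g(t))$, so $g$ solves the backward equation \eqref{eq_bwd} with initial condition $s$. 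But $t\mapsto F_t(s)$ solves the same ODE with the same initial condition, and since $u$ is locally Lipschitz on $(q,1)$ and $g$ stays in $(q,1)$ (it is monotone with values between $q$ and $1$), uniqueness of solutions to the scalar ODE gives $F_t(s) = g(t)$ for all $t$ for which $g$ is defined, i.e.\ all $t\ge 0$ when $t_-=-\infty$, and a short argument extends it to all $t\ge0$ in general since $F_t(s)\in[q,1]$ always. Part 2 is the mirror image: on $(0,q)$, strict convexity with $u(q)=0$ and $u(0)=\alpha p_0\ge0$ forces $u>0$, and the same construction with an increasing antiderivative yields $\psi_+$.

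Next I would prove the equivalence. The key observation is that $F_t(1^-) = \lim_{s\uparrow 1} F_t(s) = P(Y_t<\infty)$ by monotone convergence, since $F_t(s) = E[s^{Y_t}\Ind_{Y_t<\infty}]$ for $s<1$. Suppose first $q=1$. Then for every $s\in(0,1)$ we are in case 2 with $q=1$, so $F_t(s) = \psi_+(\psi_+^{-1}(s)+t)$, and letting $s\uparrow 1$ gives $F_t(1^-) = \lim_{\sigma\to\infty}\psi_+(\sigma+t) = q = 1$, so $Y_t<\infty$ a.s.\ for all $t$. Now suppose $q<1$ and $t_-=-\infty$. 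Then for $s\in(q,1)$ we have $F_t(s) = \psi_-(\psi_-^{-1}(s)+t)$; as $s\uparrow 1$ the argument $\psi_-^{-1}(s) = G(s)\to t_- = -\infty$, so for any fixed $t$ the quantity $\psi_-^{-1}(s)+t$ ranges over all of $(-\infty,\infty)$, whence $F_t(1^-) = \lim_{\sigma\to -\infty}\psi_-(\sigma+t) = 1$ (the left boundary limit of $\psi_-$). So again $Y_t<\infty$ a.s. Conversely, suppose $q<1$ and $t_->-\infty$; I must show $Y_t=\infty$ with positive probability for every $t>0$. For $s\in(q,1)$, $\psi_-^{-1}(s)=G(s)\in(t_-,\infty)$, and $F_t(s) = \psi_-(G(s)+t)$. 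Fix $t>0$. As $s\uparrow 1$, $G(s)\downarrow t_-$, so $G(s)+t\downarrow t_-+t > t_-$, and since $\psi_-$ is continuous and strictly decreasing with $\psi_-(t_-^+)=1$, we get $F_t(1^-) = \psi_-(t_-+t) < \psi_-(t_-^+) = 1$. Hence $P(Y_t<\infty) = F_t(1^-) < 1$, i.e.\ $Y_t=+\infty$ with positive probability. This covers all cases and closes the equivalence.

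The main obstacle is not any single computation but getting the foundational facts about $u$ and $F_t$ on the half-open intervals exactly right: that $u$ has the claimed sign on $(q,1)$ and $(0,q)$ (which rests squarely on strict convexity plus the sign constraints $u(0)\ge 0$, $u(1)\le 0$ coming from \eqref{eq_def_a}), that $1/u$ is non-integrable at the zero $q$ but the behaviour at the $1$-endpoint is genuinely dichotomous, and — most delicate — that $t\mapsto F_t(s)$ really does solve the scalar backward ODE \eqref{eq_bwd} as an honest classical ODE on $(q,1)$ even though $Y$ may explode or jump to $+\infty$. For this last point one should note that, restricted to $s<1$, the generating function only "sees" the finite part of $Y_t$, and the backward equation \eqref{eq_bwd} as stated already encodes the correct dynamics there; combined with local Lipschitzness of $u$ away from $s=1$ and the a priori bound $F_t(s)\in[q,1]$ (resp.\ $[0,q]$), Picard–Lindelöf uniqueness does the rest. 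Once these are nailed down, the translation-uniqueness of $\psi_\pm$ is immediate (two antiderivatives of $1/u$ differ by a constant), and the equivalence falls out of tracking the boundary value $F_t(1^-)=P(Y_t<\infty)$ through the explicit formula.
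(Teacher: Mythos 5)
Your proposal is correct but follows a genuinely different construction from the paper's. You build $\psi_-$ ``from the bottom up'': you define the antiderivative $G(s) = \int_{s_0}^s \frac{\dd r}{u(r)}$ on $(q,1)$, set $\psi_- = G^{-1}$, and then recover the identity $F_t(s) = \psi_-(\psi_-^{-1}(s)+t)$ by observing that both sides solve the backward ODE $\dot g = u(g)$ with the same initial value, so Picard--Lindel\"of uniqueness on $(q,1)$ (where $u$ is locally Lipschitz) forces them to agree. The paper builds $\psi_-$ ``from the top down'': it chooses $s_n \uparrow 1$, picks $t_n$ recursively so that $F_{t_n - t_{n+1}}(s_{n+1}) = s_n$, and glues $\psi_-(t) = F_{t - t_n}(s_n)$; the well-definedness of this gluing and the identity $F_t(s) = \psi_-(\psi_-^{-1}(s)+t)$ then fall out of the semigroup relation $F_{t+t'} = F_t \circ F_{t'}$ (the branching property) with no ODE uniqueness theorem needed, and $u = \psi_-'\circ\psi_-^{-1}$ is read off from the backward equation only at the very end. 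Your route makes it transparent that $\psi_\pm$ is determined by $u$ alone (two antiderivatives of $1/u$ differ by a constant, which immediately gives the translation-uniqueness), whereas the paper's route is slightly more elementary: it only leans on the algebraic semigroup identity and so avoids having to carefully justify that $t\mapsto F_t(s)$ is a classical $C^1$ solution of the backward ODE confined to the open interval where $u$ is Lipschitz --- precisely the delicate point you flag at the end. Two small things worth making explicit in your write-up: the non-integrability of $1/u$ at $s=q$ relies on $u'(q)\ne 0$, which follows from strict convexity combined with $u<0$ on $(q,1)$ (a double zero would force $u\ge 0$ near $q$); and the fact that $F_t(s)$ stays strictly inside $(q,1)$ for all $t>0$ (so that Picard--Lindel\"of is applicable throughout) is most painlessly obtained from the monotonicity in $t$ via the forward equation and $F_t(q)=q$, as the paper does. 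Your treatment of the final equivalence via $F_t(1^-) = P(Y_t<\infty)$ and the boundary behaviour of $\psi_\pm$ is a more explicit version of the paper's one-line remark, and is correct.
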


\begin{proof}
We first note that $u(s) > 0$ on $(0,q)$ and $u(s) < 0$ on $(q,1)$, since $u(s)$ is strictly convex, $u(0)\ge0$ and $u(1) \le 0$. Since $F_0(s) = s$, Kolmogorov's forward equation \eqref{eq_fwd} implies that $F_t(s)$ is strictly increasing in $t$ for $s\in(0,q)$ and strictly decreasing in $t$ for $s\in(q,1)$. The backward equation \eqref{eq_bwd} implies that $F_t(s)$ converges to $q$ as $t\to\infty$ for every $s\in[0,1)$. Repeated application of \eqref{eq_bwd} yields that $F_t(s)$ is a smooth function of $t$ for every $s\in[0,1]$.

Now assume that $q < 1$. For $n\in\N$ set $s_n = 1-2^{-n}(1-q)$, such that $q < s_1 < 1$, $s_n < s_{n+1}$ and $s_n \to 1$ as $n\to\infty$. Set $t_1 = 0$ and define $t_n$ recursively by \[t_{n+1} = t_n - t',\quad \text{where $t'>0$ is such that } F_{t'}(s_{n+1}) = s_n.\] Then $(t_n)_{n\in\N}$ is a decreasing sequence and thus has a limit $t_- \in \R\cup\{-\infty\}$. We now define for $t\in (t_-,+\infty)$,
\[\psi_-(t) = F_{t-t_n}(s_n),\quad \tif t \ge t_n.\] The function $\psi_-$ is well defined, since for every $n\in\N$ and $t\ge t_n$,
\[F_{t-t_n}(s_n) = F_{t-t_n}(F_{t_n-t_{n+1}}(s_{n+1})) = F_{t-t_{n+1}}(s_{n+1}),\]
by the branching property. The same argument shows us that if $s\in (q,1)$, $s_n > s$ and $t'>0$ such that $F_{t'}(s_n) = s$, then $F_t(s) = F_{t+t'}(s_n) = \psi_-(t+t'+t_n)$ for all $t\ge 0$. In particular, $\psi_-(t'+t_n) = s$, hence $F_t(s) = \psi_-(\psi_-^{-1}(s) + t)$. The backward equation \eqref{eq_bwd} now gives 
\[u(s) = \frac{\partial}{\partial t}F_t(s)_{\big|t=0} = \psi_-'(\psi_-^{-1}(s)).\] The second part concerning $\psi_+$ is proven completely analogously. Uniqueness up to translation of $\psi_-$ and $\psi_+$ is obvious from the requirement $\psi(\psi^{-1}(s) + t) = F_t(s)$, where $\psi$ is either $\psi_-$ or $\psi_+$.

For the last statement, note that $P(Y_t < \infty) = 1$ for all $t>0$ if and only if $F_t(1-) = 1$ for all $t>0$. But this is the case exactly if $q=1$ or $t_- = -\infty$.
\end{proof}

The following proposition shows that the functions $\psi_-$ and $\psi_+$ corresponding to $(Z_x)_{x\ge0}$ are so-called {\em travelling wave} solutions of a reaction-diffusion equation called the Fisher--Kolmo\-gorov--Petrovskii--Piskounov (FKPP) equation. This should not be regarded as a new result, since Neveu (\cite{Nev1988}, Proposition 3) proved it already for the case $c\ge c_0$ and $L=2$ a.s.\ (dyadic branching). However, his proof relied on a path decomposition result for Brownian motion, whereas we show that it follows from simple renewal argument valid for branching diffusions in general.

Recall that $f(s) = E[s^L]$ denotes the generating function of $L$. Let $q'$ be the unique fixed point of $f$ in $[0,1)$ (which exists, since $f'(1) = m+1 > 1$), and let $q$ be the smallest zero of $a(s)$ in $[0,1]$.

\begin{proposition}
\label{prop_FKPP}
Assume $c\in\R$. The functions $\psi_-$ and $\psi_+$ from Lemma \ref{lem_GW_psi} corresponding to $(Z_x)_{x\ge0}$ are solutions to the following differential equation on $(t_-,+\infty)$ and $(t_+,+\infty)$, respectively.
\begin{equation}
\label{eq_FKPP}
\frac{1}{2} \psi'' - c \psi' = \psi - f\circ\psi.
\end{equation}
Moreover, we have the following three cases:
\begin{enumerate}[nolistsep]
\item If $c\ge c_0$, then $q = q'$, $t_- = -\infty$, $a(1) = 0$, $a'(1) = \lc$, $E[Z_x] = \e^{\lc x}$ for all $x>0$.
\item If $|c|<c_0$, then $q = q'$, $t_- \in\R$, $a(1) < 0$, $a'(1) = 2c$, $P(Z_x = \infty) > 0$ for all $x > 0$.
\item If $c\le -c_0$, then $q = 1$, $a(1) = 0$, $a'(1) = \lc$, $E[Z_x] = \e^{\lc x}$ for all $x>0$.
\end{enumerate}
\end{proposition}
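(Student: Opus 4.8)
\emph{Strategy and derivation of \eqref{eq_FKPP}.} The plan is to first obtain \eqref{eq_FKPP} from a first--step (renewal) decomposition of $Z_x$ combined with a Feynman--Kac representation, and then to read off the trichotomy by combining Lemma~\ref{lem_expectation_Z}, the remarks following \eqref{eq_bwd}, and a phase--plane analysis of \eqref{eq_FKPP} near its two equilibria. Fix $s\in[0,1)$ and for $y\le x$ put $\phi(y)=E^y[s^{Z_x}]$, keeping the barrier at $x$; translation invariance gives $\phi(y)=F_{x-y}(s)$, so $\phi(x)=s$ and $\phi<1$ on $(-\infty,x]$. Let $(B_t)$ be the drift--$c$ Brownian path of the initial particle, started at $y$, let $T_x$ be its first hitting time of $x$ and $\tau\sim\mathrm{Exp}(1)$ its independent first branching time. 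Decomposing according to whether $T_x\le\tau$ (the initial particle is absorbed, contributing $s$) or $\tau<T_x$ (it splits into $L$ particles at $B_\tau$, each starting a fresh copy, contributing $f(\phi(B_\tau))$) and taking expectations gives
\[\phi(y)=s\,E^y\!\big[\e^{-T_x}\Ind_{T_x<\infty}\big]+E^y\!\Big[\int_0^{T_x}\e^{-t}f(\phi(B_t))\,\dd t\Big].\]
This is a Feynman--Kac formula; routine regularity (continuity of $\phi$ in $y$, then elliptic regularity and a bootstrap using $f\in C^\infty([0,1))$ and $\phi<1$) shows $\phi\in C^\infty$ and $\tfrac12\phi''+c\phi'=\phi-f\circ\phi$ on $(-\infty,x)$. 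Since $F_r(s)=\phi(x-r)$, the map $r\mapsto F_r(s)$ solves \eqref{eq_FKPP}. By Lemma~\ref{lem_GW_psi}, $F_r(s)=\psi_-(\psi_-^{-1}(s)+r)$ for $s\in(q,1)$ and $F_r(s)=\psi_+(\psi_+^{-1}(s)+r)$ for $s\in(0,q)$; as \eqref{eq_FKPP} is autonomous and $\psi_-^{-1}(s)$ (resp.\ $\psi_+^{-1}(s)$) ranges over all of $(t_-,\infty)$ (resp.\ $(t_+,\infty)$), it follows that $\psi_-$ and $\psi_+$ themselves solve \eqref{eq_FKPP}.

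\emph{The quantities $a(1)$, $a'(1)$, $t_-$.} Write the planar system equivalent to \eqref{eq_FKPP} as $w'=v$, $v'=2cv+2(w-f(w))$; its equilibria are $(q',0)$ and $(1,0)$, and the trajectory of $\psi_-$ runs from the vertical line $w=1$ (as $t\downarrow t_-$) to $(q',0)$ (as $t\to\infty$). Since $a=\psi_-'\circ\psi_-^{-1}$, one has $a(1)=\lim_{t\downarrow t_-}\psi_-'(t)$. If $|c|\ge c_0$ then $\lc\in\R$ and Lemma~\ref{lem_expectation_Z} gives $E[Z_x]=\e^{\lc x}<\infty$, so by the remarks after \eqref{eq_bwd} necessarily $a(1)=0$ (otherwise the process jumps to $+\infty$ at positive rate and has infinite mean) and then $E[Z_x]=\e^{a'(1)x}$, i.e.\ $a'(1)=\lc$; this handles cases 1 and 3. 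Moreover $a(1)=0$ forces the $\psi_-$--trajectory to converge to the equilibrium $(1,0)$ as $t\downarrow t_-$, which by uniqueness of solutions can only happen in infinite parameter--time, so $t_-=-\infty$ in case 1 (in case 3 one has $q=1$, so $\psi_-$ is vacuous; see below). If $|c|<c_0$, Lemma~\ref{lem_expectation_Z} only yields $E[Z_x]=\infty$, and I must exclude $a(1)=0$ by hand: if $a(1)=0$ the trajectory would again converge to $(1,0)$, but the linearisation there has eigenvalues $\lc,\lcb=c\pm\ii\sqrt{c_0^2-c^2}$ (a focus for $c\neq0$, a centre for the conserved energy of the equation when $c=0$), forcing the trajectory to wind around $w=1$ and contradicting the monotonicity of $\psi_-$. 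Hence $a(1)<0$, so $p_\infty>0$ and $P(Z_x=\infty)\ge1-\e^{-\alpha p_\infty x}>0$ for every $x>0$; then $\psi_-'(t_-)=a(1)\neq0$, so $\psi_-$ meets $w=1$ transversally and $t_-\in\R$; and evaluating \eqref{eq_FKPP} at $t_-$, where $\psi_-(t_-)=1$ and $\psi_-(t_-)-f(\psi_-(t_-))=0$, gives $\psi_-''(t_-)=2c\,\psi_-'(t_-)$, whence $a'(1)=\psi_-''(t_-)/\psi_-'(t_-)=2c$.

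\emph{The value of $q$.} Let $S$ be the all--time supremum of the positions in the barrier--free BBM. Then $F_r(0)=P(Z_r=0)=P(S<r)$ and $\{S<r\}\uparrow\{S<\infty\}$ as $r\to\infty$, so $q=\lim_{r\to\infty}F_r(0)=P(S<\infty)$ by Lemma~\ref{lem_GW_psi}. If $c\le-c_0$ then $\lc<0$ and $P(S\ge r)=P(Z_r\ge1)\le E[Z_r]=\e^{\lc r}\to0$, so $q=1$; since then $a>0$ on $[0,1)$, continuity together with $a(1)\le0$ forces $a(1)=0$, consistently with the above. If $c>-c_0$, then on the survival event of the barrier--free BBM the rightmost particle has a.s.\ asymptotic speed $c+c_0>0$ and therefore tends to $+\infty$, so $\{S<\infty\}$ coincides up to a null set with the extinction event; hence $q=q'$, the extinction probability of the embedded Galton--Watson process.

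\emph{Main obstacle.} The only non--routine point is the exclusion of $a(1)=0$ when $|c|<c_0$: this is exactly the Kolmogorov--Petrovskii--Piskounov statement that \eqref{eq_FKPP} has no monotone front below the critical speed, and turning ``the trajectory spirals'' into a rigorous argument (via Hartman--Grobman when $c\neq0$, via the conserved energy $\tfrac14(\psi')^2=\int(\psi-f(\psi))\,\dd\psi+\text{const}$ when $c=0$, or via a direct comparison argument) is where the actual work lies. The passage from the renewal identity to the ODE requires only standard regularity theory, and the value $c+c_0$ of the rightmost--particle speed is classical.
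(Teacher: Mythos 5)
Your proposal is correct and follows the same overall skeleton as the paper (renewal decomposition to get \eqref{eq_FKPP}, then read off $a(1),a'(1),t_-,q$), but the two genuinely hard steps are handled by different arguments. The paper's key device, which you do not use, is the algebraic identity $a'(s)a(s)=2ca(s)+2(s-f(s))$ obtained by writing $a=\psi'\circ\psi^{-1}$ and substituting into \eqref{eq_FKPP}; everything then falls out of this one line. To rule out $a(1)=0$ when $|c|<c_0$, the paper simply notes that $a(1)=0$ together with $E[Z_x]=\infty$ would force $a'(1)=+\infty$, while the differentiated identity gives $a'(1)=2c-2m/a'(1)$, which is finite — a two-line exclusion. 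You instead do a phase--plane analysis (complex eigenvalues at $(1,0)$, spiraling vs.\ monotonicity, with a Hartman--Grobman/energy dichotomy for $c\neq0$ vs.\ $c=0$); this is correct and is the classical KPP picture, but it is heavier machinery for the same conclusion. Likewise for $q=q'$: the paper argues purely from the identity and strict convexity of $a$ (if $q\ne q'$ then $a'(q')=2c=a'(1)$, contradicting strict convexity together with $a'(1)\geq 2c$), whereas you invoke the probabilistic identification $q=P(S<\infty)$ and the almost-sure rightmost-particle speed $c+c_0$; again correct, but it imports a non-trivial external theorem (McKean/Bramson) that the paper avoids. Finally, for $t_-=-\infty$ in case~1 your uniqueness-of-trajectories argument is fine, but the paper just invokes the last bullet of Lemma~\ref{lem_GW_psi} ($Z_x<\infty$ a.s.\ iff $q=1$ or $t_-=-\infty$). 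In short: same destination, but the paper's route through \eqref{eq_a} is self-contained and considerably shorter; yours substitutes ODE/probability folklore where the paper uses a one-line algebraic consequence of the travelling-wave equation.
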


\begin{proof}
Let $s\in (0,1)$ and define the function $\psi_s(x) = F_x(s) = E[s^{Z_x}]$ for $x\ge 0$.
By symmetry, $Z_x$ has the same law as the number of individuals $N$ absorbed at the origin in a branching Brownian motion started at $x$ and with drift $-c$. By a standard renewal argument (Lemma\;\ref{lem_difeq_branching_diffusion}), the function $\psi_s$ is therefore a solution of \eqref{eq_FKPP} on $(0,\infty)$ with $\psi_s(0+) = s$. This proves the first statement, in view of the representation of $F_x$ in terms of $\psi-$ and $\psi_+$ given by Lemma\;\ref{lem_GW_psi}.

Let $s\in(0,1)\backslash\{q\}$ and let $\psi(s) = \psi_-(s)$ if $s>q$ and $\psi(s) = \psi_+(s)$ otherwise. By \eqref{eq_FKPP},
\[a'(s) = \frac{\psi''\circ\psi^{-1}(s)}{\psi'\circ\psi^{-1}(s)} = 2c + 2\frac{\psi\circ\psi^{-1}(s) - f\circ\psi\circ\psi^{-1}(s)}{\psi'\circ\psi^{-1}(s)} = 2c + 2\frac{s - f(s)}{a(s)},\] whence, by convexity,
\begin{equation}
\label{eq_a}
a'(s)a(s) = 2ca(s) + 2(s-f(s)),\quad s\in[0,1].
\end{equation}

Assume $|c| \ge c_0$. By Lemma \ref{lem_expectation_Z}, $E[Z_x] = \e^{\lc x}$, hence $a(1) = 0$ and $a'(1) = \lc$, in particular, $a'(1) > 0$ for $c\ge c_0$ and $a'(1) < 0$ for $c \le -c_0$. By convexity, $q < 1$ for $c\ge c_0$ and $q = 1$ for $c \le -c_0$. The last statement of Lemma \ref{lem_GW_psi} now implies that $t_- = -\infty$ if $c\ge c_0$.

Now assume $|c| < c_0$. By Lemma \ref{lem_expectation_Z}, $E[Z_x] = +\infty$ for all $x>0$, hence either $a(1) < 0$ or $a(1) = 0$ and $a'(1) = +\infty$, in particular, $q < 1$ by convexity. However, if $a(1) = 0$, then by \eqref{eq_a}, $a'(1) = 2c - 2m/a'(1)$, whence the second case cannot occur. Thus, $a(1) < 0$ and $a'(1) = 2c$ by \eqref{eq_a}.

It remains to show that $q=q'$ if $q<1$. Assume $q\ne q'$. Then $a(q') \ne 0$ by the (strict) convexity of $a$ and $a'(q') = 2c$ by \eqref{eq_a}. In particular, $a'(q') \ge a'(1)$, which is a contradiction to $a$ being strictly convex.
\end{proof}

\section{Proof of Theorem \ref{th_tail}}
\label{sec_tail}
We start with the following Abelian-type lemma:
\begin{lemma}
\label{lem_XlogX}
Let $X$ be a random variable concentrated on $\N_0$ and let $\varphi(s) = E[s^X]$ be its generating function. Assume that $E[X(\log^+ X)^\gamma] < \infty$ for some $\gamma > 0$. Then, as $s\to0$,
\[\varphi'(1) - \varphi'(1-s) = O((\log \tfrac 1 s)^{-\gamma})\quad\tand\quad\varphi'(1)s + \varphi(1-s) - 1 = O(s(\log \tfrac 1 s)^{-\gamma}).\]
\end{lemma}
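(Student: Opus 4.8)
The plan is to rewrite the two quantities as tail sums of the sequence $a_k = P(X = k)$ and then exploit the moment hypothesis $E[X(\log^+ X)^\gamma] < \infty$, which says precisely that $\sum_k k (\log^+ k)^\gamma a_k < \infty$. First I would write $\varphi'(1) - \varphi'(1-s) = \sum_{k\ge 1} k a_k (1 - (1-s)^{k-1})$. Each summand is non-negative, so the whole thing is a tail sum that I want to bound by $O((\log \tfrac1s)^{-\gamma})$. I would split the sum at a threshold $k_0 = k_0(s)$ chosen so that $(1-s)^{k} $ is bounded away from $1$ exactly when $k \gtrsim 1/s$; the natural choice is $k_0 \asymp 1/s$, or more precisely I would use the elementary bound $1 - (1-s)^{k-1} \le (k-1)s \le ks$ for the small-$k$ part and $1 - (1-s)^{k-1} \le 1$ for the large-$k$ part.

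The second quantity is handled the same way: $\varphi'(1)s + \varphi(1-s) - 1 = \sum_{k\ge 1} a_k\bigl(ks - (1 - (1-s)^k)\bigr)$, and since $0 \le 1 - (1-s)^k \le ks$ each summand lies in $[0, ks]$, in fact $ks - (1-(1-s)^k) = O(\min(ks, k^2 s^2))$ by a Taylor estimate, so this is just $s$ times a quantity of the same shape as the first one (with an extra factor of $k$ in the small range that is controlled by the same threshold argument). So it suffices to do the estimate carefully once.

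For the first quantity, fix $s$ small and set $M = \lceil 1/s \rceil$. On the range $1 \le k \le M$ I bound the summand by $ks \cdot k a_k$?—no: more carefully, $k a_k(1-(1-s)^{k-1}) \le k a_k \cdot (k-1)s \le s \, k^2 a_k$, and I only want to keep this where $k^2 a_k$ is summable-with-room, which it is not in general; so instead I restrict the Taylor bound to a \emph{smaller} window. The correct split is at $k_1 = \lceil 1/(s (\log\tfrac1s)^\gamma)\rceil$ versus $k \ge k_1$ is wrong too. Let me state the split I actually intend to use: write $\log^+ k$ and compare with $\log\tfrac1s$. For $k \le 1/s$ one has $\log^+ k \le \log \tfrac1s$, and $1 - (1-s)^{k-1} \le ks$, so
\[
\sum_{1\le k \le 1/s} k a_k\bigl(1-(1-s)^{k-1}\bigr) \le s \sum_{1\le k\le 1/s} k^2 a_k.
\]
This is not $O((\log\tfrac1s)^{-\gamma})$ by itself, so the genuine trick is different: one multiplies and divides by $(\log^+ k)^\gamma$ and uses that on $k > 1/s$ we have $(\log^+ k)^\gamma \ge (\log\tfrac1s)^\gamma$, giving
\[
\sum_{k > 1/s} k a_k \bigl(1-(1-s)^{k-1}\bigr) \le \sum_{k>1/s} k a_k \le (\log\tfrac1s)^{-\gamma}\sum_{k>1/s} k (\log^+ k)^\gamma a_k = (\log\tfrac1s)^{-\gamma}\, o(1),
\]
while for $k \le 1/s$ one uses $1 - (1-s)^{k-1} \le 1$ combined with the cruder bound $1 \le (\log\tfrac1s)^{-\gamma}(\log\tfrac1s)^\gamma$ and, again, $(\log\tfrac1s)^\gamma \le C(\log^+ k)^\gamma$ \emph{fails} for small $k$. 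So the real argument must use the decay $1-(1-s)^{k-1}\le ks$ on the low range in tandem with a dyadic decomposition. The hard part, and the step I expect to be the main obstacle, is exactly this: matching the rate $(\log\tfrac1s)^{-\gamma}$ on the range $k \lesssim 1/s$, where the naive Taylor bound $1-(1-s)^{k-1}\le ks$ loses too much. I would resolve it by a dyadic split $2^j \le k < 2^{j+1}$ for $j$ from $0$ to $\log_2(1/s)$, bounding $1-(1-s)^{k-1} \le \min(1, 2^{j+1}s)$ on block $j$ and $\sum_{2^j\le k<2^{j+1}} k a_k \le (j\log 2)^{-\gamma}\sum_{2^j \le k} k(\log^+k)^\gamma a_k \le C j^{-\gamma} \varepsilon_j$ with $\varepsilon_j \to 0$, then summing the geometric-times-polynomial series in $j$ and checking the total is $O((\log_2\tfrac1s)^{-\gamma})$. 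Once this estimate is in place, the second claimed bound follows by the Taylor-with-remainder reduction described above, contributing the extra factor $s$.
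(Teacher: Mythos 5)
Your argument, once the false starts are stripped away, is correct and reaches the same conclusion, but the key estimate on the range $k \lesssim 1/s$ is handled quite differently from the paper. The paper avoids any dyadic decomposition by a single pointwise observation: choose $s_0$ so that $u \mapsto u(\log \tfrac{1}{u})^\gamma$ is increasing on $(0,s_0]$; then for $\lceil s_0^{-1}\rceil \le k < s^{-1}$ one has $s(\log\tfrac{1}{s})^\gamma \le \tfrac{1}{k}(\log k)^\gamma$, which together with $1-(1-s)^{k-1}\le ks$ bounds each summand $kp_k(1-(1-s)^{k-1})(\log\tfrac{1}{s})^\gamma$ directly by $kp_k(\log k)^\gamma$, so that the whole mid-range block is absorbed into $E[X(\log^+X)^\gamma]$ just as you do on $k \ge 1/s$; the finitely many remaining indices $k < \lceil s_0^{-1}\rceil$ contribute only a bounded constant because $s(\log\tfrac{1}{s})^\gamma$ is bounded on $(0,1)$. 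Your dyadic split with $\min(1,2^{j+1}s)$ on block $j$ and a geometric summation in $j$ also works (the $j=0$ block contributes $0$, so the singularity in $j^{-\gamma}$ is harmless), but it is heavier machinery: you are imposing a blockwise version of exactly the comparison that the paper makes pointwise. For the second estimate the paper is again more economical: rather than re-running a Taylor-with-remainder version of the sum with an extra factor of $k$, as you sketch, it simply sets $g(s) := \varphi'(1)s + \varphi(1-s) - 1$, notes $g(0)=0$ and $g'(s) = \varphi'(1)-\varphi'(1-s) = O((\log\tfrac{1}{s})^{-\gamma})$, and integrates from $0$ to $s$, using that $(\log\tfrac{1}{s})^{-\gamma}$ is slowly varying so that $\int_0^s (\log\tfrac{1}{r})^{-\gamma}\,\mathrm{d}r = O(s(\log\tfrac{1}{s})^{-\gamma})$. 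Both of your routes would survive a careful write-up, but the paper's is markedly shorter, and you should in any case prune the abandoned attempts before submitting a final version.
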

\begin{proof}
Let $s_0 > 0$ be such that the function $s\mapsto s (\log \tfrac{1}{s})^\gamma$ is increasing on $[0,s_0]$.  Let $s \in (0,s_0)$. Then, with $p_k = P(X=k)$,
\[(\varphi'(1) - \varphi'(1-s)) (\log \tfrac{1}{s})^{\gamma} = \sum_{k=1}^\infty k p_k (1-(1-s)^{k-1})(\log \tfrac{1}{s})^{\gamma}.\] If $k \ge s^{-1}$, then $(1-(1-s)^{k-1})(\log \tfrac{1}{s})^{\gamma} \le (\log k)^{\gamma}$. If $\lceil s_0^{-1} \rceil \le k < s^{-1}$, then $s(\log \frac 1 s)^\gamma < \frac 1 k (\log k)^\gamma$ and thus ${(1-(1-s)^{k-1})}(\log \tfrac{1}{s})^{\gamma} < ks (\log \tfrac{1}{s})^{\gamma} \le (\log k)^\gamma.$ Hence,
\[\sum_{k=\lceil s_0^{-1} \rceil}^\infty k p_k (1-(1-s)^{k-1})(\log \tfrac{1}{s})^{\gamma} \le \sum_{k=\lceil s_0^{-1} \rceil}^{\infty} p_k k (\log k)^\gamma \le E[X (\log^+ X)^\gamma].\]
Furthermore, we have for $s\in(0,1)$,
\[\sum_{k=1}^{\lceil s_0^{-1} \rceil} kp_k(1-(1-s)^{k-1})(\log \tfrac{1}{s})^{\gamma} \le (s(\log \tfrac{1}{s})^{\gamma}) \sum_{k=1}^{\lceil s_0^{-1} \rceil} k^2p_k \le C,\]
for some $C > 0$. Collecting these results, we have, for every $s\in(0,1)$,
\[(\varphi'(1) - \varphi'(1-s)) (\log \tfrac{1}{s})^{\gamma} \le C+ E[X (\log^+ X)^\gamma] < \infty,\]
by hypothesis. This  yields the first equality. Setting $g(s) = \varphi'(1)s + \varphi(1-s) - 1$, we note that $g(0) = 0$ and \[g'(s) = \varphi'(1) - \varphi'(1-s) = O((\log \tfrac 1 s)^{-\gamma}),\] by the first equality. Since $(\log \tfrac 1 s)^{-\gamma}$ is slowly varying,
\[g(s) = \int_0^s g'(r)\,\dd r = O(s(\log \tfrac 1 s)^{-\gamma}),\] by standard theorems on the integration of slowly varying functions (see e.g.\ \cite{Fel1971}, Section VIII.9, Theorem 1).
\end{proof}

\begin{proof}[Proof of Theorem \ref{th_tail}]
We have $c = c_0$ by hypothesis. Let $\psi_-$ be the travelling wave from Proposition \ref{prop_FKPP}, which is defined on $\R$, since $t_- = -\infty$. Let $\phi(x) = 1-\psi_-(-x)$, such that $\phi(-\infty) = 1-q$, $\phi(+\infty) =0$ and
\begin{equation}
\label{eq_phi}
\tfrac 1 2 \phi''(x) + c_0 \phi'(x) = f(1-\phi(x)) - (1-\phi(x)),
\end{equation}
by \eqref{eq_FKPP}. Furthermore, $a(1-s) = \phi'(\phi^{-1}(s))$ and $F_x(1-s) = 1-\phi(\phi^{-1}(s) - x)$.

Under the hypothesis $E[L(\log L)^{2+\ep}] < \infty$, it is known \cite{Kyp2004} that there exists $K \in (0,\infty)$, such that $\phi(x) \sim Kx\e^{-c_0 x}$ as $x\to\infty$. Since $a(1) = 0$ and $a'(1) = c_0$ by Proposition \ref{prop_FKPP}, this entails that $\phi'(x) = a(1-\phi(x)) \sim -c_0Kx\e^{-c_0x}$, as $x\to\infty$.

Set $\varphi_1 = \phi'$ and $\varphi_2 = \phi$. By \eqref{eq_phi},
\[\frac{\dd}{\dd x} \begin{pmatrix} \varphi_1(x)\\\varphi_2(x) \end{pmatrix} = \begin{pmatrix} \phi''(x)\\\phi'(x) \end{pmatrix} = \begin{pmatrix} -2c_0\phi'(x) + 2[f(1-\phi(x)) - (1-\phi(x))] \\\phi'(x) \end{pmatrix}.\] Setting $g(s) = c_0^2s + 2[f(1-s)-(1-s)] = 2[f'(1)s + f(1-s)-1]$, this gives
\begin{equation}
\label{eq_system_critical}
\frac{\dd}{\dd x} \begin{pmatrix} \varphi_1(x)\\\varphi_2(x) \end{pmatrix} = M \begin{pmatrix} \varphi_1(x)\\\varphi_2(x) \end{pmatrix} + \begin{pmatrix} g(\varphi_2(x))\\0 \end{pmatrix},\quad \text{with }M = \begin{pmatrix} -2c_0 & -c_0^2\\ 1 & 0\end{pmatrix}.
\end{equation}
The Jordan decomposition of $M$ is given by
\begin{equation}
\label{eq_Jordan_critical}
J = A^{-1}MA = \begin{pmatrix}-c_0 & 1 \\ 0 & -c_0\end{pmatrix},\quad A= \begin{pmatrix}-c_0 & 1-c_0\\ 1&1\end{pmatrix}.
\end{equation}
Setting $\begin{pmatrix} \varphi_1\\ \varphi_2 \end{pmatrix}$ = $A\begin{pmatrix} \xi_1\\ \xi_2\end{pmatrix}$, we get with $\xi = \begin{pmatrix} \xi_1 \\ \xi_2 \end{pmatrix}$:
\[\xi'(x) = J\xi(x) + \begin{pmatrix} -g(\phi(x)) \\ g(\phi(x)) \end{pmatrix},\] which, in integrated form, becomes
\begin{equation}
\label{eq_xi_integrated}
\xi(x) = \e^{xJ}\xi(0) + \e^{xJ}\int_0^x \e^{-yJ} \begin{pmatrix} -g(\phi(y)) \\ g(\phi(y)) \end{pmatrix} \,\dd y.
\end{equation}
Note that
\begin{equation}
\label{eq_exJ}
\e^{xJ} = \begin{pmatrix} \e^{-c_0 x} & x\e^{-c_0 x} \\ 0 & \e^{-c_0 x} \end{pmatrix}.
\end{equation}

By the definition of $\xi$ and the above asymptotic of $\phi$, we have $g(\phi(x)) = O(\e^{-c_0 x}/x^{1+\ep})$, as $x\to\infty$, by Lemma \ref{lem_XlogX} and the hypothesis on $L$. Equations \eqref{eq_xi_integrated} and \eqref{eq_exJ} now imply that
\[
 \xi_2(x) \sim \e^{-c_0 x}\Big(\xi_2(0) + \int_0^\infty \e^{c_0 y} g(\phi(y))\,\dd y\Big),
\]
and
\[
 (\xi_1 + \xi_2)(x) \sim x \e^{-c_0 x}\Big(\xi_2(0) + \int_0^\infty \e^{c_0 y} g(\phi(y))\,\dd y\Big),
\]
and since $\phi = \xi_1 + \xi_2$ and $\xi_2 = \phi' + c_0 \phi$, this gives
\begin{equation}
\label{eq_phiprime_phi}
 (\phi' + c_0 \phi)(x) \sim \phi(x)/x \sim K \e^{-c_0 x}.
\end{equation}
With this information, one can now show by elementary calculus (see Section \ref{sec_appendix_tail}), that
\begin{align}
\label{eq_aprimeprime}
a''(1-s) &\sim \frac{c_0}{s (\log \frac 1 s)^2},\quad\tand\\
\label{eq_Fxprimeprime}
F_x''(1-s) &\sim \frac{c_0 x\e^{-c_0 x}}{s (\log \frac 1 s)^2},\quad\tas s\to 0.
\end{align}
By standard Tauberian theorems (\cite{Fel1971}, Section XIII.5, Theorem 5), \eqref{eq_aprimeprime} implies that \[U(n) = \sum_{k=1}^n k^2 q_k \sim c_0 \frac{n}{(\log n)^2}, \quad \tas n\to \infty.\] By integration by parts, this entails that \[\sum_{k=n}^\infty q_k = \int_{n-}^\infty x^{-2} U(\dd x) \sim c_0 \left(2 \int_n^\infty \frac{1}{x^2 (\log x)^2} \,\dd x - \frac{1}{n (\log n)^2}\right).\] But the last integral is equivalent to $1/(n (\log n)^2)$ (\cite{Fel1971}, Section VIII.9, Theorem 1), which proves the first part of the theorem. The second part is proven analogously, using \eqref{eq_Fxprimeprime} instead.
\end{proof}

\section{Preliminaries for the proof of Theorem \ref{th_density}}
\label{sec_preliminaries}

In light of Proposition \ref{prop_heavytail}, one may suggest that under suitable conditions on $L$ one may extend the proof of Theorem \ref{th_tail} to the subcritical case $c > c_0$ and prove that as $n\to\infty$, $P(Z_x > n) \sim C'n^{-d}$ for some constant $C'$. In order to apply Tauberian theorems, one would then have to establish asymptotics for the $(\lfloor d \rfloor + 1)$-th derivatives of $a(s)$ and $F_x(s)$ as $s\to1$. In trying to do this, one quickly sees that the known asymptotics for the travelling wave ($1-\psi(x) \sim \text{const}\times \e^{-\lc x}$ as $x\to-\infty$, see \cite{Kyp2004}) are not precise enough for this method to work. However, instead of relying on Tauberian theorems, one can analyse the behaviour of the {\em holomorphic} function $a(s)$ near its singular point $1$. This method is widely used in combinatorics at least since the seminal paper by Flajolet and Odlyzko \cite{FO1990} and is the basis for our proof of Theorem \ref{th_density}. Not only does it work in both the critical and subcritical cases, it even yields asymptotics for the density instead of the tail only.
%

In the rest of this section, we will define our notation for the complex analytic part of the proof and review some necessary general complex analytic results.

\subsection{Notation}
\label{sec_notation}
In the course of the paper, we will work in the spaces $\C$ and $\C^2$, endowed with the Euclidean topology. An open connected set is called a {\em region}, a simply connected region containing a point $z_0$ is also called a {\em neighbourhood} of $z_0$. The closure of a set $D$ is denoted by $\overline{D}$, its border by $\partial D$. The disk of radius $r$ around $z_0$ is denoted by $\D(z_0,r) = \{z\in\C: |z-z_0| < r\}$, its closure and border by $\Db(z_0,r)$ and $\partial\D(z_0,r)$, respectively. We further use the abbreviation $\D = \D(0,1)$ for the unit disk. For $0 \le \varphi \le \pi$, $r > 0$ and $x\in\R$, we define
\begin{align*}
G(\varphi,r) &= \{z\in\D(1,r)\backslash\{1\}:|\arg(1-z)|<\pi-\varphi\}, & S_+(\varphi,x) &= [x,\infty) \times (-\varphi,\varphi),\\
\Delta(\varphi,r) &= \{z\in\D(0,1+r)\backslash\{1\}: |\arg(1-z)|<\pi-\varphi\}, & S_-(\varphi,x) &= (-\infty,x] \times (-\varphi,\varphi),\\
H(\varphi,r) &= \{z\in\D(0,r)\backslash\{0\}:|\arg z| < \varphi\}.
\end{align*}
Note that $H(\varphi,r) = 1-G(\pi-\varphi,r)$. Here and during the rest of the paper, $\arg(z)$ and $\log(z)$ are the principal values of argument and logarithm, respectively.

Let $G$ be a region in $\C$, $z_0\in \overline{G}$ and $f$ and $g$ analytic functions in $G$ with $g(z)\ne 0$ for all $z\in G$. We write
\begin{align*}
f(z) = o(g(z))\quad&\iff\quad \forall \ep>0\ \exists\delta>0\ \forall z\in G\cap \D(z_0,\delta):|f(z)|\le\ep |g(z)|,\\
f(z) = O(g(z))\quad&\iff\quad \exists C\ge 0\ \exists\delta>0\ \forall z\in G\cap \D(z_0,\delta):|f(z)|\le C |g(z)|,\\
f(z) = \Otilde(g(z))\quad&\iff\quad \exists K\in\C: f(z) = Kg(z) + o(g(z)),\\
f(z) \sim g(z)\quad&\iff\quad f(z) = g(z) + o(g(z)),
\end{align*}
specifying that the relations hold as $z\to z_0$.
\subsection{Complex differential equations}

In this section, we review some basics about complex differential equations. We start with the fundamental existence and uniqueness theorem (\cite{Bie1965}, p.\;1, \cite{Hil1976}, Theorem 2.2.1, p.\ 45 or \cite{Inc1944}, Section 12.1, p.\ 281).

\begin{fact}
\label{th_cde}
Let $G$ be a region in $\C^2$ and $(w_0,z_0)$ a point in $G$. Let $f:G\to \C$ be analytic in $G$, i.e.\ $f$ is continuous and both partial derivatives exist and are continuous. Then there exists a neighbourhood $U$ of $z_0$ and a unique analytic function $w:U\to\C$, such that
\begin{enumerate}[nolistsep]
\item $w(z_0) = w_0$,
\item $(w(z),z)\in G$ for all $z\in U$ and
\item $w'(z) = f(w(z),z)$ for all $z\in U$.
\end{enumerate}
In other words, the differential equation $w' = f(w,z)$ with initial condition $w(z_0) = w_0$ has exactly one solution $w(z)$ which is analytic at $z_0$.
\end{fact}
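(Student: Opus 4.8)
The plan is to prove Fact~\ref{th_cde} by Picard iteration carried out within the class of analytic functions, which yields existence, analyticity and uniqueness in one stroke; I will also indicate the classical alternative via Cauchy's method of majorants. First I would localise. Since $G$ is open and $(w_0,z_0)\in G$, choose $a,b>0$ with the closed polydisc $\Pi=\Db(w_0,b)\times\Db(z_0,a)\subset G$. Because $f$ is analytic on $G$ in the stated sense, it is in particular holomorphic in each variable separately and bounded on $\Pi$; set $M=\sup_\Pi|f|$ and, using the Cauchy estimate in the $w$-variable, let $\Lambda=\sup_\Pi|\partial f/\partial w|<\infty$, so that $f$ is $\Lambda$-Lipschitz in $w$ uniformly over $\Pi$. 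Put $\delta=\min(a,b/M)$ and $U=\D(z_0,\delta)$.

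Next I would set $w_0(z)\equiv w_0$ and $w_{k+1}(z)=w_0+\int_{z_0}^z f(w_k(\zeta),\zeta)\,\dd\zeta$, the integral along the straight segment from $z_0$ to $z$; since $U$ is a disk and each integrand will be analytic, this is path-independent and each $w_{k+1}$ is again analytic on $U$. An easy induction gives $|w_k(z)-w_0|\le M|z-z_0|<b$ on $U$, so every step makes sense, and a second induction (parametrising the segment and estimating) yields
\[
|w_{k+1}(z)-w_k(z)|\le \frac{M\,\Lambda^{k}\,|z-z_0|^{k+1}}{(k+1)!},\qquad z\in U.
\]
Hence $\sum_k(w_{k+1}-w_k)$ converges uniformly on $U$, so the limit $w$ is analytic on $U$ (Weierstrass), still satisfies $|w(z)-w_0|\le M|z-z_0|<b$ so that $(w(z),z)\in G$, and — letting $k\to\infty$ in the integral identity, using the uniform convergence together with the uniform continuity of $f$ on $\Pi$ — satisfies $w(z)=w_0+\int_{z_0}^z f(w(\zeta),\zeta)\,\dd\zeta$. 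Differentiating gives $w'=f(w,z)$ with $w(z_0)=w_0$, which is (1)--(3).

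For uniqueness I would take any solution $\tilde w$ analytic on a neighbourhood $U'$ of $z_0$; shrinking $U'$ it stays in $\Db(w_0,b)$ and therefore also solves the integral equation there, so, writing $h(r)=\sup_{|\zeta-z_0|\le r}|w(\zeta)-\tilde w(\zeta)|$ and integrating along radii, $h(r)\le\Lambda\int_0^r h(s)\,\dd s$; iterating this $k$ times gives $h(r)\le (\Lambda r)^k/k!\cdot\max_{[0,r]}h\to0$, so $w\equiv\tilde w$ near $z_0$. As an alternative to the whole argument, the ODE forces a unique formal power series $w(z)=w_0+\sum_{n\ge1}c_n(z-z_0)^n$ whose coefficients are polynomials with non-negative coefficients in the $c_m$, $m<n$, and in the Taylor coefficients of $f$; majorising $f$ by $M\bigl(1-(w-w_0)/b\bigr)^{-1}\bigl(1-(z-z_0)/a\bigr)^{-1}$, for which the equation is separable and solved in closed form with a positive radius of convergence, bounds this series term by term and proves convergence. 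The main obstacle is not the iteration but the two-variable bookkeeping: one must make sure that "$f$ analytic on $G\subset\C^2$" in the weak sense of the statement still delivers the Cauchy estimate for $\partial f/\partial w$ and the joint continuity needed to pass limits — this is exactly where Osgood's lemma (or Hartogs' theorem) is invoked — and one must keep the iterates inside the polydisc $\Pi$ where all these bounds hold, which is what forces the choice $\delta=\min(a,b/M)$.
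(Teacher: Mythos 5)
The paper does not prove Fact~\ref{th_cde}; it is stated as a classical result with pointers to the textbooks of Bieberbach, Hille and Ince. Your Picard-iteration proof is correct and is one of the two standard demonstrations (the primary one in those references is Cauchy's method of majorants, which you sketch as an alternative), so there is nothing to compare against on the paper's side. Two small observations. First, you do not need Osgood's lemma or Hartogs' theorem here: the hypothesis already grants continuity of $f$ and of both complex partials on $G$, so on the compact polydisc $\Pi$ you get the bound $\Lambda=\sup_\Pi|\partial f/\partial w|<\infty$ directly from continuity, and joint holomorphy of $f$ (hence holomorphy of $\zeta\mapsto f(w(\zeta),\zeta)$) is immediate without any subtle several-complex-variables input; the Cauchy estimate you invoke to get $\Lambda$ is likewise superfluous. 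Second, the one step worth stating explicitly is that the integrand $f(w_k(\zeta),\zeta)$ is holomorphic in $\zeta$ on $U$ (composition of holomorphic maps), which is what makes $w_{k+1}$ analytic and the segment integral path-independent on the disk $U$; you assert it but it deserves a sentence. With those cosmetic points aside, the localisation, the contraction estimates $|w_{k+1}-w_k|\le M\Lambda^k|z-z_0|^{k+1}/(k+1)!$, the Weierstrass passage to the limit, and the Gronwall-type uniqueness argument are all sound.
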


The following standard result is a special case of a theorem by Painlev\'e (\cite{Bie1965}, p.\ 11, \cite{Hil1976}, Theorem 3.2.1, p.\ 82 or \cite{Inc1944}, Section 12.3, p.\ 286f).

\begin{fact}
\label{fact_extend_border}
Let $H$ be a region in $\C$ and $w(z)$ analytic in $H$. Let $G$ be a region in $\C^2$, such that $(w(z),z)\in G$ for each $z\in H$ and suppose that there exists an analytic function $f:G\to \C$, such that $w'(z) = f(w(z),z)$ for each $z\in H$. Let $z_0\in\partial H$. Suppose that $w(z)$ is continuous at $z_0$ and that $(w(z_0),z_0)\in G$. Then $z_0$ is a regular point of $w(z)$, i.e.\ $w(z)$ admits an analytic extension at $z_0$.
\end{fact}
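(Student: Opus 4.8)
The plan is to reduce the statement to the local existence--uniqueness theorem (Fact~\ref{th_cde}) applied at the boundary value of $w$, and then to transport the resulting local solution back onto $H$. Since $w$ is continuous at $z_0$, put $w_0=\lim_{z\to z_0,\,z\in H}w(z)$; by hypothesis $(w_0,z_0)\in G$. Applying Fact~\ref{th_cde} at the point $(w_0,z_0)$ produces a neighbourhood $N$ of $z_0$ and a unique analytic function $\widetilde w\colon N\to\C$ with $\widetilde w(z_0)=w_0$, $(\widetilde w(z),z)\in G$ for all $z\in N$, and $\widetilde w'(z)=f(\widetilde w(z),z)$ on $N$. It then suffices to prove that $\widetilde w$ agrees with $w$ on $N\cap H$ (shrinking $N$ if necessary): $\widetilde w$ is then the desired analytic extension and $z_0$ is a regular point of $w$.

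To establish this agreement I would first note that $g:=w-\widetilde w$ is analytic on the open set $N\cap H$ and that $g(z)\to 0$ as $z\to z_0$, since both $w(z)$ and $\widetilde w(z)$ tend to $w_0$. Writing $g'=f(w,z)-f(\widetilde w,z)=h\,g$ with $h(z)=\int_0^1 \frac{\partial f}{\partial w}\big(\widetilde w(z)+t\,g(z),\,z\big)\,\dd t$, the function $h$ is analytic on $N\cap H$ and, by continuity of $\partial f/\partial w$ near $(w_0,z_0)$ together with $w,\widetilde w\to w_0$, bounded there once $N$ is small. Fixing any $z_1\in N\cap H$ and a rectifiable path in $N\cap H$ running from the limit point $z_0$ to $z_1$, integration of $g'=h\,g$ along the path combined with $g\to 0$ at $z_0$ gives, via Gr\"onwall's inequality, $g(z_1)=0$. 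Hence $w$ and $\widetilde w$ coincide at $z_1$, so by the uniqueness part of Fact~\ref{th_cde} they coincide in a full neighbourhood of $z_1$; since the set $E=\{z\in N\cap H: w\equiv\widetilde w\text{ on a neighbourhood of }z\}$ is then both open and closed in $N\cap H$, they coincide on the whole connected component containing $z_1$. Running this over all components that have $z_0$ in their closure yields $w=\widetilde w$ on $N\cap H$, as required.

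The one delicate point is the Gr\"onwall step, which requires joining $z_0$ to interior points by paths staying in $H$ of controlled length; the open--closed propagation of the identity $w=\widetilde w$ and the invocation of Fact~\ref{th_cde} are soft. For the domains actually used in the sequel, $H$ is near $z_0$ a sector of the form $G(\varphi,r)$ or $\Delta(\varphi,r)$, which is connected and in which $z_0$ is joined to every nearby point by a straight segment of length $<\operatorname{diam}N$, so the estimate is immediate; for an arbitrary region this is precisely the geometric content of Painlev\'e's theorem, and I would at that point simply refer to \cite{Bie1965,Hil1976,Inc1944}. Accordingly I would present the argument above for the concrete sectorial domains we need, noting that the general case is classical.
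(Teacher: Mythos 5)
The paper does not prove Fact~\ref{fact_extend_border}: it is stated as a known result, with citations to Bieberbach, Hille and Ince, as a special case of Painlev\'e's theorem, so there is no in-paper proof to compare against. Your argument is precisely the standard proof found in those references: apply Fact~\ref{th_cde} at $(w_0,z_0)$ to obtain a local analytic solution $\widetilde w$ on a neighbourhood $N$ of $z_0$, then identify $\widetilde w$ with $w$ on $N\cap H$ by writing $g'=hg$ for $g=w-\widetilde w$ with $h$ bounded once $N$ is small, running a Gr\"onwall estimate along paths in $N\cap H$ approaching $z_0$, and propagating the identity by the open--closed argument together with local uniqueness. This is correct, including the observation that the only genuinely non-soft step is furnishing rectifiable paths of controlled length from interior points to $z_0$. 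One inaccuracy worth flagging in your closing paragraph: at the place where Fact~\ref{fact_extend_border} is actually invoked in the paper (the proof of Lemma~\ref{lem_h}), the region $H$ is the open unit disk $\D$ and $z_0\in\partial\D$ with $z_0^\delta\ne1$, not one of the sectors $G(\varphi,r)$ or $\Delta(\varphi,r)$; but a disk near a boundary point also joins nearby interior points to $z_0$ by segments of length $O(|z-z_0|)$, so your Gr\"onwall step applies verbatim and the mislabelling of the domain is purely cosmetic.
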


Let $[z_1,\ldots,z_k]_n$ denote a power series of the variables $z_1,\ldots,z_k$, converging in a neighbourhood of $(0,\ldots,0)$ and which contains only terms of order $n$ or higher. The complex differential equation
\begin{equation}
\label{eq_briot_bouquet}
zw' = \lambda w + pz + [w,z]_2,\quad\lambda,p\in\C,
\end{equation}
was introduced in 1856 by Briot and Bouquet \cite{BB1856} as an example of a complex differential equation admitting analytic solutions at a singular point of the equation. More precisely, they obtained (\cite{Hil1976}, Theorem 11.1.1, p.\;402):
\begin{fact}
\label{fact_bb_holo}
If $\lambda$ is not a positive integer, then there exists a unique function $w(z)$ which is analytic in a neighbourhood of $z=0$ and which satisfies \eqref{eq_briot_bouquet}. Furthermore, $w(0)=0$.
\end{fact}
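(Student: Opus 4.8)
The plan is to prove Fact \ref{fact_bb_holo} by the classical method of \emph{formal power series plus majorants}. Since the statement is that the Briot--Bouquet equation \eqref{eq_briot_bouquet} has a unique analytic solution near $0$ vanishing at $0$ (when $\lambda\notin\N$), I would first establish existence and uniqueness of a \emph{formal} power series solution $w(z)=\sum_{n\ge1}c_nz^n$, and then prove that this formal series has positive radius of convergence by dominating it with the Taylor series of an explicit analytic function.

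First I would substitute $w(z)=\sum_{n\ge1}c_nz^n$ into $zw'=\lambda w+pz+[w,z]_2$. Writing the double power series $[w,z]_2=\sum_{j+k\ge2}a_{jk}w^jz^k$ and comparing coefficients of $z^n$ on both sides, the left side contributes $n c_n$ and the right side contributes $\lambda c_n$ plus (for $n=1$) the term $p$ plus a polynomial $P_n$ in $c_1,\dots,c_{n-1}$ and the $a_{jk}$ with $j+k\le n$. This yields the recursion $(n-\lambda)c_n = [\,n=1\,]\,p + P_n(c_1,\dots,c_{n-1})$. Here is exactly where the hypothesis $\lambda\notin\N$ enters: it guarantees $n-\lambda\neq0$ for every $n\ge1$, so each $c_n$ is uniquely determined, giving existence and uniqueness of the formal solution (and in particular $w(0)=c_0=0$ is forced, since we sought a solution with no constant term; one should also note $c_0$ cannot be nonzero because plugging a constant term in forces it to be a fixed point, handled by the same coefficient comparison at order $0$).

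The substantive step — and the main obstacle — is convergence. The standard device is the method of majorants: replace $[w,z]_2$ by a series $\Phi(w,z)$ with nonnegative coefficients dominating $|a_{jk}|$ termwise, chosen so that the associated majorant equation can be solved in closed form, e.g. $\Phi(w,z)=\dfrac{C(w+z)^2}{1-(w+z)/r}$ for suitable $C,r>0$ coming from the convergence of $[w,z]_2$ on a polydisk. One then shows the majorant equation $zW' = |\lambda| W + |p|z + \Phi(W,z)$ — or rather a scalar equation in which the awkward factor $(n-\lambda)^{-1}$ is replaced by the uniform bound $\sup_n |n-\lambda|^{-1}<\infty$ (valid precisely because $\lambda\notin\N$ keeps $n-\lambda$ bounded away from $0$) — admits an analytic solution $W(z)=\sum d_n z^n$ with $d_n\ge0$. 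The coefficients satisfy $d_n = \sup_m|m-\lambda|^{-1}\,(\,[n=1]|p| + Q_n(d_1,\dots,d_{n-1})\,)$ where $Q_n$ is $P_n$ with all coefficients replaced by their absolute values; an induction on $n$ then gives $|c_n|\le d_n$, so the radius of convergence of $\sum c_nz^n$ is at least that of $W$, which is positive. Solving the majorant equation in closed form reduces to an algebraic (quadratic-type) equation whose analyticity at $0$ is checked by the implicit function theorem, and one concludes that $w(z)$ is analytic in a neighbourhood of $0$.

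Finally, I would remark that once analyticity of the formal solution is established, the fact that it genuinely satisfies \eqref{eq_briot_bouquet} in a neighbourhood of $0$ is immediate from the coefficient identities, and uniqueness \emph{among analytic solutions vanishing at $0$} follows from the uniqueness of the formal solution (any analytic solution has a Taylor expansion satisfying the same recursion). This completes the proof; the only delicate point throughout is the quantitative majorant bound, and everything else is bookkeeping with power series.
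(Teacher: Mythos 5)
The paper does not actually supply a proof of Fact~\ref{fact_bb_holo}: it states it as a classical result and attributes it to Briot--Bouquet, with pointers to Hille (Theorem 11.1.1) and Ince as references. So there is no ``paper's own proof'' to compare against; what you have done is reconstruct the standard textbook argument.

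Your reconstruction is correct and is essentially the argument given in the cited sources. The two load-bearing observations are both present: (i) comparing the coefficient of $z^n$ on both sides of $zw'=\lambda w+pz+[w,z]_2$ yields $(n-\lambda)c_n = [\,n{=}1\,]p + P_n(c_1,\dots,c_{n-1})$, and $\lambda\notin\N$ is exactly what makes $n-\lambda\ne 0$ for every $n\ge 1$, hence the formal solution exists and is unique; (ii) convergence follows from a majorant, and the hypothesis $\lambda\notin\N$ is used a second time to get the uniform bound $M=\sup_{n\ge1}|n-\lambda|^{-1}<\infty$ that lets you replace the non-autonomous factor $(n-\lambda)^{-1}$ by a constant. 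One cosmetic remark: after that replacement the majorant relation $W = M\bigl(|p|z+\Phi(W,z)\bigr)$ is no longer a differential equation but a purely algebraic one, and the cleanest way to see it has an analytic solution $W(z)$ vanishing at $0$ with nonnegative Taylor coefficients is the implicit function theorem at $(0,0)$ (the partial derivative in $W$ is $1$ there because $\Phi$ is of order $\ge2$); your remark that a quadratic-type $\Phi$ can be inverted in closed form amounts to the same thing. With $|c_n|\le d_n$ established by induction, analyticity and uniqueness among analytic solutions with $w(0)=0$ both follow, as you say. The proof is complete.
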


The singular solutions to this equation were later investigated by Poincar\'e, Picard and others (for a full bibliography, see \cite{HKM1961}). We are going to need the following result (see \cite{HKM1961}, Paragraph III.9.$2^{\text{o}}$ or \cite{Hil1976}, Theorem 11.1.3, p.\;405, but note that the latter reference is without proof and the statement is slightly incomplete).
\begin{fact}
\label{fact_bb_general}
Assume $\lambda > 0$. There exists a function $\psi(z,u) = \sum_{jk\ge0} p_{jk} z^j u^k$, converging in a neighbourhood of $(0,0)$ and such that $p_{00} = 0$ and $p_{01} = 1$, such that the general solution of \eqref{eq_briot_bouquet} which vanishes at the origin is $w = \psi(z,u)$, with
\begin{itemize}[nolistsep, label=$-$]
\item $u = Cz^\lambda$, if $\lambda \notin \N$,
\item $u = z^\lambda(C+K\log z)$, if $\lambda \in \N$.
\end{itemize}
Here, $C\in\C$ is an arbitrary constant and $K\in\C$ is a fixed constant depending only on the right-hand side of \eqref{eq_briot_bouquet}.
\end{fact}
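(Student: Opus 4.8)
The plan is to prove this by the classical two-variable method of majorants: one rewrites the ordinary differential equation \eqref{eq_briot_bouquet} as a partial differential equation for the unknown series $\psi$, solves it formally by comparison of coefficients, and then checks convergence. To set up the PDE, in the non-resonant case $\lambda\notin\N$ one puts $u=Cz^\lambda$, so that $zu'=\lambda u$; in the resonant case $\lambda=N\in\N$ one puts $u=z^N(C+K\log z)$, so that $zu'=Nu+Kz^N$. Substituting $w=\psi(z,u)$ into \eqref{eq_briot_bouquet} and using $zw'=z\psi_z+(zu')\psi_u$ turns the equation into
\[
z\psi_z+\lambda u\,\psi_u=\lambda\psi+pz+[\psi,z]_2
\quad\text{resp.}\quad
z\psi_z+Nu\,\psi_u+Kz^N\psi_u=N\psi+pz+[\psi,z]_2,
\]
read as identities of formal power series in $(z,u)$; this makes sense because $\psi$ is required to have no constant term. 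It thus suffices to construct a convergent series $\psi(z,u)=\sum_{j,k\ge0}p_{jk}z^ju^k$ with $p_{00}=0$, $p_{01}=1$ solving the relevant PDE, together with the constant $K$ in the integer case, and then to argue that $w=\psi(z,u)$ with $u$ as above is the general solution vanishing at the origin.

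For the formal solution, comparing coefficients of $z^ju^k$ shows that $p_{jk}$ enters the left-hand side with the factor $j+\lambda(k-1)$, while the right-hand side at $(j,k)$ involves --- through the term $[\psi,z]_2$, which has order $\ge2$ --- only coefficients $p_{j'k'}$ with $j'+k'<j+k$, plus the forcing $p$ at $(1,0)$ and, in the integer case, $K(k+1)p_{j-N,k+1}$. Since $\lambda>0$, the factor $j+\lambda(k-1)$ vanishes only at $(0,1)$, and, when $\lambda=N\in\N$, additionally at $(N,0)$; elsewhere it is bounded below in modulus by a positive constant and grows linearly in $j+k$. Hence $p_{00}=0$ is forced and $p_{01}$ is free (normalise $p_{01}=1$); if $\lambda\notin\N$ all other $p_{jk}$ are obtained by dividing by $j+\lambda(k-1)\ne0$, the recursion being well founded thanks to the order drop; and if $\lambda=N$ the equation at $(N,0)$ reads $Kp_{01}=p\,\Ind_{\{N=1\}}+$ (the coefficient of $z^Nu^0$ in $[\psi,z]_2$), whose right-hand side involves only coefficients of order $<N$, so it \emph{defines} $K$ in terms of the data of \eqref{eq_briot_bouquet}, after which $p_{N0}$ is free (set to $0$, the remaining freedom being carried by $C$) and the other coefficients are determined as before.

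Convergence of $\psi$ near $(0,0)$ is then obtained by a two-variable majorant argument (following \cite{Hil1976}, \cite{HKM1961}): because the divisors $j+\lambda(k-1)$ are bounded away from $0$ off the single resonance, the recursion for $(|p_{jk}|)$ is dominated coefficientwise by that of an auxiliary equation possessing an explicit analytic solution, which confines $\sum|p_{jk}|z^ju^k$ to a polydisc about the origin; in the integer case $|z^N(C+K\log z)|\to0$ as $z\to0$ in a sector, so the substitution stays inside that polydisc. Finally, to see that one obtains \emph{all} solutions vanishing at the origin: since $p_{01}=1$, the map $(z,u)\mapsto(z,\psi(z,u))$ is biholomorphic near $(0,0)$, with analytic inverse $(z,w)\mapsto(z,\chi(z,w))$, $\chi(0,0)=0$; given a solution $w(z)\to0$ of \eqref{eq_briot_bouquet} on a sector, set $U(z)=\chi(z,w(z))\to0$, so that $w=\psi(z,U)$, and substitute back into \eqref{eq_briot_bouquet}, using the PDE and $\psi_u\ne0$ near $(0,0)$, to obtain $zU'=\lambda U$ (resp. $zU'=NU+Kz^N$); integrating yields $U=Cz^\lambda$ (resp. $U=z^N(C+K\log z)$), which is exactly the claimed form, and conversely any such $u$ produces a solution vanishing at $0$.

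I expect the main obstacle to be the majorant step: the algebra above --- identifying the two resonances and pinning down $K$ --- is routine bookkeeping, but constructing an explicit dominating equation with an analytic solution and verifying that it really majorises the recursion uniformly in both indices $j$ and $k$, while keeping track of the order drop in $[\psi,z]_2$ and of the extra $Kz^N\psi_u$ contribution so that the majorant closes, is where the care is needed.
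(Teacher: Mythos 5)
The paper does not actually prove this statement: it is labelled a \emph{Fact} and attributed to the references \cite{HKM1961} (Paragraph III.9.$2^{\text{o}}$) and \cite{Hil1976} (Theorem 11.1.3), with Remark \ref{rem_bb_general} only indicating the shape of the argument, namely that one constructs $\psi$ so that the change of variables $w = \psi(z,u)$ turns \eqref{eq_briot_bouquet} into $zu' = \lambda u$ (resp.\ $zu' = \lambda u + Kz^\lambda$), and that the ``general solution'' claim is obtained by inverting $\psi$ via the implicit function theorem. Your sketch follows exactly that classical route: derive the PDE for $\psi$ by the substitution, identify the resonances $(0,1)$ and $(N,0)$, build the coefficients recursively with the normalisation $p_{01}=1$, pin $K$ down from the obstruction at $(N,0)$, close by a majorant argument, and then use the biholomorphism $(z,u)\mapsto(z,\psi(z,u))$ so that every solution tending to $0$ carries an inner function $U$ solving $zU'=\lambda U$ (resp.\ $zU'=NU+Kz^N$), which integrates to the claimed form. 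This matches the cited references and the paper's Remark \ref{rem_bb_general}.

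Two caveats, at the technical level you yourself flagged. First, for $\lambda = N = 1$ the extra term $K(k+1)p_{j-1,k+1}$ does \emph{not} drop in total degree $j+k$, so ``well-founded thanks to the order drop'' is not quite sufficient there: within each degree $n=j+k$ one must process $p_{jk}$ in increasing $j$ (equivalently decreasing $k$), so that $p_{j-1,k+1}$ has already been produced before one divides by $j+(k-1)$; for $N\ge 2$ the drop is genuine and your phrasing is fine. Second, the majorant step has to absorb the growing factor $k+1$ from $Kz^N\psi_u$ against the linearly growing divisor $j+\lambda(k-1)$; this is classical and works, but it is precisely the part of the argument that would need to be written out, and you were right to identify it as the crux.
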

\begin{remark}
\label{rem_bb_general}
The above statement is slightly imprecise, in that the term \emph{solution} is not defined, i.e.\ what \emph{a priori} knowledge of $w(z)$ (regarding its domain of analyticity, smoothness, behaviour at $z=0$, \ldots) is required in order to guarantee that it admits the representation stated in Fact \ref{fact_bb_general}? Inspecting the proof (as in \cite{HKM1961}, for example) shows that it is actually enough to know that $w(z)$ satisfies \eqref{eq_briot_bouquet} on an interval $(0,\ep)$ of the real line and that $w(0+) = 0$. We briefly explain why:

In order to prove Fact \ref{fact_bb_general}, one shows that there exists a function $\psi$ of the form stated above, such that when changing variables by $w = \psi(z,u)$, the function $u(z)$ \emph{formally} satisfies one of the equations
\[zu' = \lambda u\quad\text{or}\quad zu' = \lambda u + K z^\lambda,\] according to whether $\lambda\notin\N$ or $\lambda \in \N$.

Now suppose that $w(z)$ satisfies the above conditions. By the implicit function theorem (\cite{Hor1973}, Theorem 2.1.2), we can invert $\psi$ to obtain a function $\varphi(w,z) = w + qz + [w,z]_2$, $q\in\C$, such that $\psi(z,\varphi(w,z)) = w$ in a neighbourhood of $(0,0)$. We may thus define $u(z) = \varphi(w(z),z)$ for all $z\in(0,\ep_1)$ for some $\ep_1>0$. Moreover, $u(z)$ now truly satisfies the above equations on $(0,\ep_1)$ and $u(0+) = 0$. Standard theory of ordinary differential equations on the real line now yields that $u$ is necessarily of the form stated in Fact \ref{fact_bb_general}.

We further remark that since $u(z)$ is analytic in the slit plane $\C\backslash(-\infty,0]$ and goes to $0$ as $z\to 0$ in $\C\backslash(-\infty,0]$, there exists an $r>0$, such that $(z,u(z))$ is in the domain of convergence of $\psi(z,u)$ for every $z\in H(\pi,r)$. Hence, every solution $w(z)$ can be analytically extended to $H(\pi,r)$.
\end{remark}

\subsection{Singularity analysis}
\label{sec_singanalysis}
We now summarise results about the singularity analysis of generating functions. The basic references are \cite{FO1990} and \cite{FS2009}, Chapter VI. The results are of two types: those that establish an asymptotic for the coefficients of functions that are explicitly  known, and those that estimate the coefficients of functions which are dominated by another function. We start with the results of the first type:

\begin{fact}
\label{th_singanalysis_explicit}
Let $d\in (1,\infty)\backslash\N$, $k\in\N$, $\gamma\in\Z\backslash\{0\}$, $\delta\in\Z$ and the functions $f_1$, $f_2$ defined by
\[
f_1(z) = (1-z)^d\quad\tand\quad f_2(z) = (1-z)^k \left(\log \frac 1 {1-z}\right)^\gamma \left(\log \log \frac 1 {1-z}\right)^\delta,
\]
for $z\in\C\backslash[1,+\infty)$. Let $(p^{(i)}_n)$ be the coefficients of the Taylor expansion of $f_i$ around the origin, $i=1,2$. Then $(p^{(i)}_n)$ satisfy the following asymptotics as $n\to\infty$:
\[
p^{(1)}_n \sim \frac{K_1}{n^{d+1}} \quad\tand\quad p^{(2)}_n \sim \frac{K_2 (\log n)^{\gamma - 1}(\log \log n)^\delta}{n^{k+1}},
\]
for some non-zero constants $K_1 = K_1(d),K_2 = K_2(k,\gamma,\delta)$. We have $K_2(1,-1,0) = 1$.
\end{fact}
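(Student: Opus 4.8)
The plan is to treat $f_1$ by an elementary manipulation of the generalized binomial series and $f_2$ by the Hankel-contour method underlying singularity analysis; both are classical (essentially Theorems~VI.1--VI.3 of \cite{FS2009}), so in the paper I would only record the computation of the constants. For $f_1$: expanding $(1-z)^d=\sum_{n\ge0}\binom dn(-z)^n$ gives
\[
p^{(1)}_n=(-1)^n\binom dn=(-1)^n\frac{d(d-1)\cdots(d-n+1)}{n!}=\frac{\Gamma(n-d)}{\Gamma(-d)\,\Gamma(n+1)},
\]
using the functional equation of $\Gamma$ together with $d\notin\N$. Since $\Gamma(n-d)/\Gamma(n+1)\sim n^{-d-1}$ by Stirling's formula, $p^{(1)}_n\sim n^{-d-1}/\Gamma(-d)$, i.e.\ $K_1(d)=1/\Gamma(-d)$, which is finite and nonzero precisely because $-d$ is not a nonpositive integer (recall $d\in(1,\infty)\setminus\N$).

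For $f_2$: read with the standard $1/z$ corrections inside the logarithms so that it is analytic in $\C\setminus[1,\infty)$ (this does not affect the behaviour at $z=1$), I start from Cauchy's formula $p^{(2)}_n=\tfrac1{2\pi\ii}\oint f_2(z)z^{-n-1}\,\dd z$ over a small circle and deform the contour to a Hankel contour $\H_n$ hugging $z=1$ — a circle of radius $1/n$ about $1$ joined to two horizontal segments — which is legitimate since $f_2$ is analytic in a domain $\Delta(\varphi,r)$. The substitution $z=1-t/n$ turns $1-z$ into $t/n$, so $\log\tfrac1{1-z}=\log n-\log t$ and $\log\log\tfrac1{1-z}=\log\log n+\log(1-\tfrac{\log t}{\log n})$, while $z^{-n-1}\to\e^t$ and $\dd z=-\dd t/n$. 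After factoring out $n^{-k-1}(\log n)^\gamma(\log\log n)^\delta$, the integrand tends to $t^k(1-\tfrac{\log t}{\log n})^\gamma\e^t(1+o(1))$, and the expansion $(1-\tfrac{\log t}{\log n})^\gamma=1-\gamma\tfrac{\log t}{\log n}+O((\tfrac{\log t}{\log n})^2)$ shows that the constant term contributes $\tfrac1{2\pi\ii}\int_{\H}t^k\e^t\,\dd t=1/\Gamma(-k)=0$ (since $k\in\N$), while the $\log$-term contributes $-\tfrac{\gamma}{\log n}\cdot\tfrac1{2\pi\ii}\int_{\H}t^k\log t\,\e^t\,\dd t$. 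Differentiating the Hankel representation $\tfrac1{2\pi\ii}\int_{\H}t^{-s}\e^t\,\dd t=1/\Gamma(s)$ in $s$ and using that $1/\Gamma$ has a simple zero at $s=-k$ with derivative $(-1)^k k!$ gives $\tfrac1{2\pi\ii}\int_{\H}t^k\log t\,\e^t\,\dd t=(-1)^{k+1}k!$. Hence $p^{(2)}_n\sim\gamma(-1)^k k!\,(\log n)^{\gamma-1}(\log\log n)^\delta n^{-k-1}$, so $K_2(k,\gamma,\delta)=\gamma(-1)^k k!$, and in particular $K_2(1,-1,0)=1$; the factor $(\log\log n)^\delta$ rides through unchanged because its correction $\log(1-\tfrac{\log t}{\log n})/\log\log n$ is of lower order.

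The main obstacle is the rigorous bookkeeping of the Hankel integral: justifying the contour deformation ($\Delta$-domain analyticity), bounding the contribution of the far part of $\H_n$ uniformly in $n$, and making the expansions of $(1-\tfrac{\log t}{\log n})^\gamma$ and of the $(\log\log\,\cdot\,)^\delta$ factor uniform on the near part with errors strictly below the target order $(\log n)^{\gamma-1}(\log\log n)^\delta n^{-k-1}$. For $\gamma>0$ and $\delta=0$ one can shortcut part of this by writing $f_2(z)=\partial_\epsilon^\gamma(1-z)^{k-\epsilon}\big|_{\epsilon=0}$ and differentiating the known coefficient asymptotics of $(1-z)^{k-\epsilon}$ in $\epsilon$ (the simple zero of $1/\Gamma(\epsilon-k)$ at $\epsilon=0$ is exactly what produces the drop from $(\log n)^\gamma$ to $(\log n)^{\gamma-1}$), but for $\gamma<0$ or $\delta\neq0$ the contour method seems unavoidable. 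Since all of this is standard, in the write-up I would cite \cite{FO1990} and \cite[Ch.~VI]{FS2009} and limit the argument to pinning down $K_1$ and $K_2$.
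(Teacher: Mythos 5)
Your proposal is correct but proceeds by a genuinely different route than the paper: the paper's proof of this Fact is a one-line citation (Proposition~1 and Remark~3 of \cite{FO1990}, plus the footnote in \cite{FS2009} taking care of the extra $1/z$ factors needed to make the logarithms analytic in $\C\setminus[1,\infty)$), whereas you actually rederive the asymptotics. Your computation of $K_1(d)=1/\Gamma(-d)$ via the binomial expansion and Stirling is exactly right (and nonzero precisely because $d\notin\N$). Your Hankel-contour computation of $K_2$ is also correct and consistent with the paper: because $k\in\N$ the naive leading term vanishes ($1/\Gamma(-k)=0$), the dominant contribution comes from the $-\gamma\frac{\log t}{\log n}$ correction, and differentiating the Hankel representation of $1/\Gamma$ at its simple zero $s=-k$ (with derivative $(-1)^kk!$) gives $K_2(k,\gamma,\delta)=\gamma(-1)^kk!$, which indeed yields $K_2(1,-1,0)=1$ and is nonzero for $\gamma\neq0$. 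What your argument buys over the paper's: it is self-contained and produces explicit values of $K_1$ and $K_2$ (the paper only records $K_2(1,-1,0)=1$, which is the single value it actually needs in \eqref{eq_asymptotics_a_1} and \eqref{eq_asymptotics_Ft_1}). What the paper's citation buys: brevity, and it delegates to \cite{FO1990} precisely the ``rigorous bookkeeping of the Hankel integral'' that you correctly identify as the technical burden of your route (contour deformation inside a $\Delta$-domain, far-arc tail bounds uniform in $n$, and error control in the expansions of $(1-\tfrac{\log t}{\log n})^\gamma$ and $(\log\log\tfrac1{1-z})^\delta$). Since Remark~3 of \cite{FO1990} already supplies exactly the statement required, the paper's choice to cite rather than reprove is reasonable; your derivation would be the natural thing to include if the explicit values of $K_1,K_2$ were needed downstream, which in this paper they are not.
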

\begin{proof}
For $f_1$, this is Proposition 1 from \cite{FO1990}. For $f_2$ this is Remark 3 at the end of Chapter 3 in the same paper. Note that the additional factors $\frac 1 z$ do not change the nature of the singularities, since $\frac 1 z$ is analytic at 1 (see the footnote on p.\ 385 in \cite{FS2009}). The last statement follows from Remark 3 as well.
\end{proof}

The results of the second type are contained in the next theorem. It is identical to Corollary 4 in \cite{FO1990}. Note that a potential difficulty here is that it requires analytical extension outside the unit disk.
\begin{fact}
\label{th_singanalysis_o}
Let $0<\varphi<\pi/2$, $r > 0$ and $f(z)$ be analytic in $\Delta(\varphi,r).$ Assume that as $z\to 1$ in $\Delta(\varphi,r)$,
\[f(z) = o\left((1-z)^\alpha L\left(\frac 1 {1-z}\right)\right),\quad\text{where }L(u) = (\log u)^\gamma (\log\log u)^\delta,\quad \alpha,\gamma,\delta\in\R.\]
Then the coefficients $(p_n)$ of the Taylor expansion of $f$ around 0 satisfy
\[p_n = o\left(\frac{L(n)}{n^{\alpha+1}}\right),\quad\tas n\to\infty.\]
\end{fact}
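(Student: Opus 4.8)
The plan is to recover $p_n$ by Cauchy's coefficient formula and to push the contour of integration onto a Hankel-type curve that runs at distance $1/n$ from the singular point $z=1$, where the hypothesis on $f$ can be fed in directly. Since $\Delta(\varphi,r)$ is a simply connected region on which $f$ is analytic, in particular near $0$, for any simple closed curve $\Gamma\subset\Delta(\varphi,r)$ winding once around $0$ but not around $1$ one has $p_n=\pm\tfrac{1}{2\pi\ii}\int_\Gamma f(z)\,z^{-n-1}\,\dd z$, hence $|p_n|\le\tfrac{1}{2\pi}\int_\Gamma|f(z)|\,|z|^{-n-1}\,|\dd z|$. Fix $\theta$ with $\varphi<\theta<\pi/2$ (possible since $\varphi<\pi/2$; note $\cos\theta>0$) and let $\eta>0$ satisfy $|1+\eta\e^{\ii\theta}|=1+r/2$. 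For $n$ large, take $\Gamma=\gamma_1\cup\gamma_2\cup\gamma_3\cup\gamma_4$, where $\gamma_1=\{z:|z-1|=\tfrac1n,\ \arg(z-1)\in[\theta,2\pi-\theta]\}$ is the \emph{major} arc of the circle of radius $1/n$ about $1$ (passing through $1-\tfrac1n$, i.e.\ on the side of $1$ facing the origin, so as to skirt the excised wedge), $\gamma_2=\{1+\tfrac tn\e^{\ii\theta}:1\le t\le\eta n\}$ and $\gamma_4=\{1+\tfrac tn\e^{-\ii\theta}:1\le t\le\eta n\}$ are the two rectilinear segments, and $\gamma_3$ is the arc of $\{|z|=1+r/2\}$ joining the outer endpoints of $\gamma_2,\gamma_4$ while avoiding the excised wedge near $z=1+r/2$. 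The condition $\theta>\varphi$ guarantees $\Gamma\subset\Delta(\varphi,r)$, and a winding-number check confirms $\Gamma$ encircles $0$ once but not $1$.

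Two of the four contributions are disposed of immediately. On $\gamma_3$, $|z|=1+r/2$, so $|z|^{-n-1}$ is exponentially small while $f$ is bounded on the compact arc $\gamma_3$; this piece is $o(n^{-\alpha-1}L(n))$. On $\gamma_1$, $|1-z|=1/n$ exactly, so $(1-z)^\alpha L\!\bigl(\tfrac1{1-z}\bigr)$ has modulus $n^{-\alpha}L(n)(1+o(1))$ uniformly on $\gamma_1$ (using that $L$, being a product of powers of $\log$ and $\log\log$, is slowly varying); thus $|f(z)|=o(n^{-\alpha}L(n))$ uniformly on $\gamma_1$, and since $|z|^{-n-1}$ is bounded there ($|z|\ge1-\tfrac1n$) and $\gamma_1$ has length $O(1/n)$, this piece is also $o(n^{-\alpha-1}L(n))$.

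The only delicate piece is $\gamma_2$ (and, symmetrically, $\gamma_4$). Writing $z=1+\tfrac tn\e^{\ii\theta}$ one has $|1-z|=t/n$ and $|z|^2\ge1+\tfrac{2t}n\cos\theta$ with $t/n\le\eta$ bounded, so $|z|^{-n-1}\le\e^{-ct}$ for a fixed $c>0$; since $|\dd z|=\dd t/n$,
\[
\int_{\gamma_2}|f(z)|\,|z|^{-n-1}\,|\dd z|\ \le\ \frac1n\int_1^{\eta n}\bigl|f\bigl(1+\tfrac tn\e^{\ii\theta}\bigr)\bigr|\,\e^{-ct}\,\dd t .
\]
The difficulty: the hypothesis supplies $|f(1+\tfrac tn\e^{\ii\theta})|=o\bigl((t/n)^\alpha L(n/t)\bigr)$ only in the regime $t/n\to0$, whereas the integral runs to $t\asymp n$, and $L(n/t)$ is not uniformly comparable to $L(n)$.

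I would resolve this by splitting $[1,\eta n]$ at $t=\delta n$ and then $[1,\delta n]$ at $t=\log n$: given $\ep>0$, pick $\delta\in(0,\eta)$ so small that the definition of $o$ yields $|f(1+\tfrac tn\e^{\ii\theta})|\le\ep(t/n)^\alpha L(n/t)$ whenever $t/n\le\delta$. On $[\delta n,\eta n]$ the factor $\e^{-ct}\le\e^{-c\delta n}$ is super-polynomially small and $f$ is bounded there, so that range is negligible. On $[1,\delta n]$ the integral is at most $\ep\,n^{-\alpha-1}\int_1^{\delta n}t^\alpha L(n/t)\,\e^{-ct}\,\dd t$; on $[1,\log n]$ the slow variation of $L$ gives $L(n/t)=L(n)(1+o(1))$ uniformly (since $\log t/\log n\to0$) and $\int_1^{\log n}t^\alpha\e^{-ct}\,\dd t\to\int_1^\infty t^\alpha\e^{-ct}\,\dd t<\infty$, while on $[\log n,\delta n]$ the bound $\e^{-ct}\le n^{-c}$ forces that contribution to be $O(n^{-c/2})=o(L(n))$; hence $\int_1^{\delta n}t^\alpha L(n/t)\e^{-ct}\,\dd t=O(L(n))$ and the $\gamma_2$-contribution is $O(\ep)\,n^{-\alpha-1}L(n)$. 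Since $\ep>0$ is arbitrary, this piece is $o(n^{-\alpha-1}L(n))$ too, and adding the four estimates gives $p_n=o\bigl(n^{-\alpha-1}L(n)\bigr)$. (This is essentially the proof of Corollary 4 in \cite{FO1990}; the one point requiring genuine care is the interplay just described, along the rectilinear parts of the contour, between the pointwise little-$o$ hypothesis and the slowly varying $L$.)
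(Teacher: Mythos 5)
The paper does not prove this fact itself; it states that it is identical to Corollary~4 of \cite{FO1990} and cites that reference. Your argument is a correct and careful reconstruction of the Flajolet--Odlyzko Hankel-contour method used there, including the two-stage splitting of the rectilinear integral at $t=\delta n$ and $t=\log n$ that is genuinely needed to reconcile the pointwise little-$o$ hypothesis near $z=1$ with the slowly varying factor $L$, so it matches the cited proof in approach and in the point where care is required.
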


\subsection{An equation for continuous-time Galton--Watson processes}
\label{sec_GW}
In this section, let $(Y_t)_{t\ge0}$ be a homogeneous continuous-time Galton--Watson process starting at 1. Let $a(s)$ be its infinitesimal generating function and $F_t(s) = E[s^{Y_t}]$. Assume $a(1) = 0$ and $a'(1) = \lambda \in (0,\infty),$ such that $a(s) = 0$ has a unique root $q$ in $[0,1)$.

The following proposition establishes a relation between the infinitesimal generating function of a Galton--Watson process and its generating function at time $t$. For real $s$, the formulae stated in the proposition are well known, but we will need to use them for complex $s$, which is why we have to include some (complicated) hypotheses to be sure that the functions and integrals appearing in the formulae are well defined.

\begin{proposition}
\label{prop_exp_Ft_a}
Suppose that $a$ and $F_t$ have analytic extensions to some regions $D_a$ and $D_F$. Let $Z_a = \{s\in D_a: a(s) = 0\}$. Let there be simply connected regions $G\subset D_a\backslash Z_a$ and $D\subset G\cap D_F$ with $F_t(D) \subset G$ and $D\cap (0,1) \ne \empt$. Then the following equations hold for all $s\in D$:
\begin{equation}
\label{eq_int_Ft_a}
\int_s^{F_t(s)} \frac{1}{a(r)} \,\dd r = t,
\end{equation}
and
\begin{equation}
\label{eq_exp_Ft_a}
1-F_t(s) = \e^{\lambda t}(1-s)\exp\left(-\int_s^{F_t(s)} f^*(r) \,\dd r\right),
\end{equation}
where $f^*(s)$ is defined for all $s\in D_a\backslash Z_a$ as
\begin{equation}
\label{eq_fstar_def}
f^*(s) = \frac{\lambda}{a(s)} + \frac{1}{1-s},
\end{equation}
and the integrals may be evaluated along any path from $s$ to $F_t(s)$ in $G$.
\end{proposition}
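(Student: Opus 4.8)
The plan is to prove the two identities \eqref{eq_int_Ft_a} and \eqref{eq_exp_Ft_a} first for real $s$ in the interval $D\cap(0,1)$, where everything is classical, and then extend them to all of $D$ by analytic continuation, using the hypotheses on the regions $D_a$, $D_F$, $G$, $D$ precisely to guarantee that the integrands and paths of integration stay in domains where the relevant functions are holomorphic and single-valued.

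First I would treat the real case. Fix $s\in D\cap(0,1)$. The backward Kolmogorov equation \eqref{eq_bwd}, i.e.\ $\frac{\partial}{\partial t}F_t(s) = a(F_t(s))$, together with $F_0(s) = s$, says that $t\mapsto F_t(s)$ is the solution of an autonomous ODE $\dot r = a(r)$. Since $s\in(q,1)$ or $s\in(0,q)$ (note $s\ne q$ because $F_t$ moves $s$ strictly, by Lemma \ref{lem_GW_psi}), the trajectory stays in $(q,1)$ or $(0,q)$ for all $t\ge0$, where $a$ does not vanish. Separating variables gives $\int_s^{F_t(s)} \frac{\dd r}{a(r)} = t$, which is \eqref{eq_int_Ft_a} on the real segment. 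For \eqref{eq_exp_Ft_a}, write $\frac{d}{dt}\log(1-F_t(s)) = \frac{-a(F_t(s))}{1-F_t(s)}$ and integrate; using the partial-fraction identity $\frac{1}{1-r} = f^*(r) - \frac{\lambda}{a(r)}$ from \eqref{eq_fstar_def} and then \eqref{eq_int_Ft_a} to replace $\int \frac{\lambda\,\dd r}{a(r)}$ by $\lambda t$, one obtains $\log\frac{1-F_t(s)}{1-s} = \lambda t - \int_s^{F_t(s)} f^*(r)\,\dd r$, which is \eqref{eq_exp_Ft_a}. Here one should observe that $f^*$ is analytic at $s=1$: near $1$, $a(r) = \lambda(r-1) + O((r-1)^2)$ by $a(1)=0$, $a'(1)=\lambda$, so the two polar parts cancel and $f^*$ extends holomorphically across $1$; hence $f^*$ is analytic on $D_a\setminus Z_a$ as claimed.

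The second stage is the analytic continuation. Define $\Phi(s) = \int_s^{F_t(s)} \frac{\dd r}{a(r)}$, where the path from $s$ to $F_t(s)$ is taken inside $G$. Since $G$ is simply connected and $1/a$ is analytic on $G$ (as $G\subset D_a\setminus Z_a$), the value of the integral is independent of the chosen path in $G$ — this is where simple connectedness of $G$ is used — so $\Phi$ is a well-defined function on $D$. Moreover $s\mapsto \Phi(s)$ is analytic on $D$: one can write $\Phi(s) = A(F_t(s)) - A(s)$ where $A$ is any fixed primitive of $1/a$ along a fixed base path in $G$ (again using simple connectedness to make $A$ single-valued, after fixing a base point), and $F_t$ is analytic on $D$ with $F_t(D)\subset G$, so the composition is analytic. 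We have shown $\Phi(s) = t$ on the nonempty open real subinterval $D\cap(0,1)$ of the region $D$; since $D$ is connected and $\Phi$ is analytic, the identity theorem gives $\Phi\equiv t$ on all of $D$, proving \eqref{eq_int_Ft_a}. The same argument applies to \eqref{eq_exp_Ft_a}: both sides are analytic functions of $s$ on $D$ (for the right-hand side, the integral $\int_s^{F_t(s)} f^*(r)\,\dd r$ is a well-defined analytic function on $D$ by exactly the same path-independence reasoning, now using that $f^*$ is analytic on $G\subset D_a\setminus Z_a$, and $\exp$ and multiplication preserve analyticity), and they agree on $D\cap(0,1)$ by the real computation, hence everywhere on $D$.

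The main obstacle, and the reason for the elaborate hypotheses, is bookkeeping about single-valuedness: one must be careful that "$\int_s^{F_t(s)}$" genuinely denotes an unambiguous analytic function of $s$ and not something that depends on a choice of path or picks up periods of $1/a$ around zeros of $a$. The conditions $G\subset D_a\setminus Z_a$ simply connected and $F_t(D)\subset G$ are exactly what is needed; once these are in place the argument is a routine application of the identity theorem, with the only genuinely substantive computation being the real-variable ODE manipulation and the partial-fraction cancellation making $f^*$ regular at $1$. One small point to verify along the way is that $D\cap(0,1)$ really lands in the regime $(0,q)\cup(q,1)$ covered by the real computation, which follows because $a\ne0$ on $G\supset D$ forces $q\notin D$.
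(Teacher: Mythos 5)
Your proof is correct and follows essentially the same route as the paper: establish the real-variable identities from the backward Kolmogorov equation, then extend to all of $D$ by expressing the integrals as differences of single-valued antiderivatives on the simply connected region $G$ and invoking the identity theorem. The only cosmetic difference is that the paper deduces \eqref{eq_exp_Ft_a} directly on all of $D$ by substituting the already-continued \eqref{eq_int_Ft_a} into the right-hand side (using a branch of $-\log(1-s)$ as an antiderivative of $1/(1-s)$ on $G$), whereas you run the analytic continuation separately for each identity; both work, and your side remark about $f^*$ extending across $s=1$ is true but not needed here since $1\in Z_a$ and so $1\notin G$.
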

\begin{proof}
For $s\in(0,1)\backslash\{q\}$, equation \eqref{eq_int_Ft_a} follows readily from Kolmogorov's backward equation \eqref{eq_bwd}, when the integral is interpreted as the usual Riemann integral (\cite{AN1972}, p.\;106). Now note that by definition of $G$, both $\frac{1}{a(s)}$ and $f^*$ are analytic in the simply connected region $G$ and therefore possess antiderivatives $g$ and $h$ in $G$. Thus, the functions
\[s\mapsto \int_s^{F_t(s)} \frac{1}{a(r)} \,\dd r = g(F_t(s))-g(s)\quad\tand\quad s\mapsto \int_s^{F_t(s)} f^*(r) \,\dd r = h(F_t(s))-h(s)\]
are analytic in $D$. By the analytic continuation principle, \eqref{eq_int_Ft_a} then holds for every $s\in D$, since $D\cap(0,1)\ne \empt$ by hypothesis. This proves the first equation. For the second equation, note that $-\log (1-s)$ is an antiderivative of $\frac{1}{1-s}$ in $G$, whence the right-hand side of \eqref{eq_exp_Ft_a} equals
\[\e^{\lambda t}(1-s)\exp\left(\log(1-F_t(s)) - \log(1-s) - \lambda \int_s^{F_t(s)} \frac{1}{a(r)}\,\dd r \right) = 1-F_t(s),\]
for all $s\in D$, by \eqref{eq_int_Ft_a}. This gives \eqref{eq_exp_Ft_a}.
\end{proof}

\begin{corollary}
\label{cor_1regular}
If 1 is a regular point of $a(s)$, then it is a regular point for $F_t(s)$ for every $t\ge0$.
\end{corollary}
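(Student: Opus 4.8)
The plan is to deduce Corollary~\ref{cor_1regular} directly from Proposition~\ref{prop_exp_Ft_a} together with Fact~\ref{fact_extend_border} (Painlev\'e's theorem applied to Kolmogorov's backward equation). Suppose $1$ is a regular point of $a(s)$, so that $a$ extends analytically to a neighbourhood $U$ of $1$. Since $a(1)=0$ and $a'(1)=\lambda>0$, by continuity $a(s)\ne 0$ for $s$ in a punctured neighbourhood of $1$, and $a(1)=0$; but what we actually need is a region on which $a$ is non-vanishing. The key observation is that $a$ is strictly convex on $[0,1]$ with $a<0$ on $(q,1)$ and $a(1)=0$, so along the real axis just below $1$ the function $a$ is negative and bounded away from $0$ except near $1$ itself. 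To handle the zero at $s=1$ I would instead work with $f^*(s)=\lambda/a(s)+1/(1-s)$, which by \eqref{eq_fstar_def} is the natural object: near $s=1$, since $a(s)=\lambda(s-1)+O((s-1)^2)$, the two summands have cancelling simple poles, so $f^*$ is in fact \emph{analytic at $1$}. Thus $f^*$ extends holomorphically to a genuine (unpunctured) neighbourhood $V$ of $1$.

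Next I would use \eqref{eq_exp_Ft_a} to get an analytic extension of $F_t$ past $1$. Fix $t\ge 0$. On $(0,1)$ we know $F_t(s)\in(q,1)$ and $F_t(s)\to 1$ as $s\to 1-$, and $F_t$ is real-analytic on $(0,1)$. I want to apply Proposition~\ref{prop_exp_Ft_a} with suitable regions: take $G$ to be a simply connected region contained in $V\cap\{a\ne 0\}\cup\{1\}$ --- more carefully, since $1/a$ has a simple pole at $1$ but $f^*$ does not, I would choose $G$ to be a small disc $\D(1,\eta)$ and work with the identity \eqref{eq_exp_Ft_a} rather than \eqref{eq_int_Ft_a}, so that only $f^*$ (analytic on all of $G$) and the elementary factor $(1-s)$ appear. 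Pick $D\subset G\cap D_F$ a neighbourhood of a point $s_0\in(0,1)$ close to $1$ with $F_t(D)\subset G$; such $D$ exists because $F_t$ is continuous and $F_t(s_0)$ is close to $1$. Then for $s\in D$,
\[
1-F_t(s) = \e^{\lambda t}(1-s)\exp\!\left(-\int_{s}^{F_t(s)} f^*(r)\,\dd r\right).
\]
Now I would argue that the right-hand side, viewed as a function of $s$, extends analytically to a full neighbourhood of $1$: write $F_t(s)=1-\e^{\lambda t}(1-s)\exp(-\Phi(s))$ where $\Phi(s)=\int_{s}^{F_t(s)} f^*(r)\,\dd r$, and note this is a fixed-point--type relation. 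Alternatively, and more cleanly, I would differentiate: from the backward equation $\partial_x F_t = a(F_t)$ is an ODE in $t$; instead use that $F_t$ as a function of $s$ solves (by \eqref{eq_int_Ft_a}, differentiating in $s$) $\,a(s)\,\partial_s F_t(s) = a(F_t(s))$, i.e. $\partial_s F_t(s) = a(F_t(s))/a(s)$. This is of the form $w'(s)=\mathcal{F}(w(s),s)$ with $\mathcal{F}(w,s)=a(w)/a(s)$, which is analytic wherever $a(s)\ne 0$; on a punctured neighbourhood of $1$, with $w$ near $1$, the right-hand side is analytic, and $F_t(s)\to 1$ as $s\to 1-$.

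The main obstacle, and the heart of the argument, is exactly that $s=1$ is a singular point of the ODE $w'=a(w)/a(s)$ because $a(1)=0$, so Fact~\ref{fact_extend_border} does not apply verbatim at $s=1$. This is where I would exploit the structure: rewrite the relation in terms of $v=1-w$ and $\sigma=1-s$. Since $a(1-\sigma)=-\lambda\sigma+O(\sigma^2)$, the equation becomes $\sigma v' = \lambda v + [v,\sigma]_2$ --- precisely a Briot--Bouquet equation \eqref{eq_briot_bouquet} with $\lambda>0$ a positive integer or not. If $\lambda\notin\N$, Fact~\ref{fact_bb_holo} gives a unique analytic-at-$0$ solution, and since our $v(\sigma)=1-F_t(1-\sigma)$ vanishes at $\sigma=0$ and satisfies the equation on $(0,\ep)$, Remark~\ref{rem_bb_general} (applied with the trivial $\psi$, or directly) forces it to be this analytic solution; hence $F_t$ is regular at $1$. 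If $\lambda\in\N$, Fact~\ref{fact_bb_general} a priori allows a logarithmic term $u=\sigma^\lambda(C+K\log\sigma)$; here I would invoke \eqref{eq_exp_Ft_a} directly to rule out the log: since $f^*$ is analytic at $1$, the integral $\int_s^{F_t(s)}f^*(r)\,\dd r$ is a convergent quantity tending to $0$, so $1-F_t(s) = \e^{\lambda t}(1-s)(1+o(1))$ with no logarithmic correction, which pins down $K=0$ and leaves only the analytic branch. Either way one concludes $F_t$ extends analytically to a neighbourhood of $1$, i.e. $1$ is a regular point of $F_t$, for every $t\ge0$. I would present the $f^*$-analyticity observation and the reduction to Briot--Bouquet form as the two load-bearing steps, and expect the bookkeeping around which of \eqref{eq_int_Ft_a} and \eqref{eq_exp_Ft_a} to use near the pole of $1/a$ to be the only delicate point.
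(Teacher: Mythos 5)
Your first load-bearing observation --- that $f^*(s)=\lambda/a(s)+1/(1-s)$ is genuinely analytic at $1$ because the two simple poles cancel --- is exactly the key step the paper uses, and you would be well placed to finish cleanly from there. The paper does so without any Briot--Bouquet machinery: let $F^*$ be an antiderivative of $f^*$ on $H\cup U$ (where $U$ is a neighbourhood of $1$), set $g(s)=(1-s)\exp(F^*(s))$, and observe that \eqref{eq_exp_Ft_a} is precisely the functional equation $g(F_t(s))=\e^{\lambda t}g(s)$. Since $g'(1)=-\exp(F^*(1))\ne 0$, $g$ is a local biholomorphism near $1$, so $\widetilde F_t:=g^{-1}(\e^{\lambda t}g)$ is analytic on a neighbourhood of $1$ and agrees with $F_t$ on the overlap with the unit disk by \eqref{eq_exp_Ft_a}. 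That is the whole proof; the ``fixed-point--type relation'' you gestured at and then abandoned is exactly this, and it is strictly simpler than what you carry out instead.

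Your Briot--Bouquet detour contains a concrete computational error. Writing $v=1-w$, $\sigma=1-s$, the ODE $\partial_s F_t=a(F_t)/a(s)$ becomes $v'(\sigma)=a(1-v)/a(1-\sigma)$ with \emph{both} numerator and denominator of the form $-\lambda(\cdot)+[\cdot]_2$. The $\lambda$'s cancel: $\sigma v' = v + [v,\sigma]_2$, so the Briot--Bouquet parameter is always $1$, not $\lambda=a'(1)$. Your case split ``$\lambda\notin\N$: apply Fact~\ref{fact_bb_holo}; $\lambda\in\N$: rule out the log'' is therefore misplaced --- Fact~\ref{fact_bb_holo} never applies, and you are always in the potentially logarithmic case with BB-parameter $1\in\N$. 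Your argument for ruling out the log (from $1-F_t(1-\sigma)\sim\e^{\lambda t}\sigma$ via $f^*$ analyticity, or even more directly from the fact that $v=\sigma$ is an analytic solution, forcing $K=0$ in Fact~\ref{fact_bb_general}) is sound and salvages the conclusion, but as written the reduction is wrong and the case logic would mislead a reader. Independently of this, the detour is unnecessary: once $f^*$ is known to be analytic at $1$, Proposition~\ref{prop_exp_Ft_a} already hands you a local conjugacy to multiplication by $\e^{\lambda t}$ that can be inverted directly.
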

\begin{proof}
Define $G = \{s\in\D: \Re s > q\}$. Then $G \cap Z_a = \empt$, since $q$ is the only zero of $a$ in $\D$ (every probability generating function $g$ with $g'(1) > 1$ has exactly one fixed point $q$ in $\D$; this can easily be seen by applying Schwarz's lemma to $\tau^{-1}\circ g \circ\tau$, where $\tau$ is the M\"obius transformation of the unit disk that maps $0$ to $q$). Let $s_1\in (q,1)$ be such that $F_t(s) \in G$ for every $s\in H = \{s\in\D:\Re s > s_1\}$. We can then apply Proposition \ref{prop_exp_Ft_a} to conclude that \eqref{eq_exp_Ft_a} holds for every $s\in H$.

Since $a(s)$ is analytic in a neighbourhood $U$ of 1 by hypothesis, it is easy to show that $f^*$ is analytic in $U$ as well. Thus, $f^*$ has an antiderivative $F^*$ in $H \cup U$. We define the function $g(s) = (1-s)\exp(F^*(s))$ on $H\cup U$. Since $g'(1) = -\exp(F^*(1)) \ne 0$, there exists an inverse $g^{-1}$ of $g$ in a neighbourhood $U_1$ of $g(1) = 0$. Let $U_2\subset U$ be a neighbourhood of 1, such that $\e^{\lambda t} g(s)\in U_1$ for every $s\in U_2$. Define the analytic function $\widetilde{F}_t(s) = g^{-1}(\e^{\lambda t} g(s))$ for $s\in U_2$. Then by \eqref{eq_exp_Ft_a}, we have $F_t(s) = \widetilde{F}_t(s)$ for every $s\in H\cap U_2$, hence $\widetilde{F}_t$ is an analytic extension of $F_t$ at 1.
\end{proof}

\begin{corollary}
\label{cor_analyticity_Ft}
Suppose that $a(s)$ has an analytic extension to $G(\varphi_0,r_0)$ for some $0 < \varphi_0 < \pi$ and $r_0 > 0$. Suppose further that there exist $c\in\R$, $\gamma > 1$, such that $a(1-s) = -\lambda s + \lambda c s/\log s + O(s/|\log s|^\gamma)$ as $s\to0$. Then for every $\varphi_0 < \varphi < \pi$ there exists $r>0$, such that $F_t(s)$ can be analytically extended to $G(\varphi,r)$, mapping $G(\varphi,r)$ into $G(\varphi_0,r_0)$.
\end{corollary}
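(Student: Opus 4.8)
The plan is to follow the pattern of the proof of Corollary~\ref{cor_1regular}: use the exponential formula \eqref{eq_exp_Ft_a} to write down an explicit candidate $\widetilde F_t(s)=g^{-1}(\e^{\lambda t}g(s))$ for the extension, where $g(s)=(1-s)\exp(F^*(s))$ and $F^*$ is an antiderivative of $f^*=\lambda/a+1/(1-\cdot\,)$, then check that it is analytic, maps into $G(\varphi_0,r_0)$, and agrees with $F_t$. The crucial difference with Corollary~\ref{cor_1regular} is that here $a$ — hence $g$ — is \emph{not} analytic at $1$, so $g$ cannot be inverted via the inverse function theorem at $1$; one must instead prove that $g$ is injective on a whole small subsector of $G(\varphi_0,r_0)$ with vertex $1$, and carefully track the loss of angular aperture.

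First I would set up $g$. After shrinking $r_0$ (which only strengthens the conclusion), the expansion $a(1-u)=-\lambda u(1+o(1))$ shows $a$ is zero-free on the simply connected region $G(\varphi_0,r_0)$, so $f^*$ is analytic there and possesses an antiderivative $F^*$. From the hypothesis $a(1-s)=-\lambda s+\lambda c s/\log s+O(s/|\log s|^\gamma)$ a short computation gives $f^*(s)=\frac{c}{(1-s)\log\frac{1}{1-s}}+O(|1-s|^{-1}|\log(1-s)|^{-\gamma'})$ as $s\to1$ for some $\gamma'>1$; since the leading term is $c\frac{\dd}{\dd s}\log\log\frac{1}{1-s}$ and $\gamma'>1$, integrating and normalising the constant yields $F^*(s)=c\log\log\frac{1}{1-s}+O(|\log(1-s)|^{1-\gamma'})$, hence $g(s)\sim(1-s)(\log\frac1{1-s})^c\to0$ as $s\to1$. (The principal branch $\log\log\frac{1}{1-s}$ is legitimate because $\log\frac{1}{1-s}$ has large positive real part near $1$ in the region.) I would also record the identity $g'/g=\lambda/a$.

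Next I would prove injectivity and the mapping property. In the variable $\sigma=\log\frac{1}{1-s}$, which maps $G(\varphi_0,r_0)$ biholomorphically onto the half-strip $\Sigma=\{\Re\sigma>\log\frac{1}{r_0},\ |\Im\sigma|<\pi-\varphi_0\}$, put $h(\sigma)=-\sigma+F^*(1-\e^{-\sigma})$, so that $\e^{h(\sigma)}=g(1-\e^{-\sigma})$, $h(\sigma)=-\sigma+c\log\sigma+O(|\sigma|^{1-\gamma'})$, and $h'(\sigma)=\lambda\e^{-\sigma}/a(1-\e^{-\sigma})=-1+\ep(\sigma)$ with $\ep(\sigma)\to0$. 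Fixing $R\ge\log\frac1{r_0}$ large enough that $\sup_{\Re\sigma>R}|\ep|<\tfrac12$, convexity of the half-strip gives $|h(\sigma_1)-h(\sigma_2)|\ge\tfrac12|\sigma_1-\sigma_2|$ for $\Re\sigma_j>R$; since also $|\Im(h(\sigma_1)-h(\sigma_2))|<2(\pi-\varphi_0)+o(1)<2\pi$ (here $\varphi_0>0$ is used), $\e^{h}$ — and thus $g$ — is injective on $G(\varphi_0,r')$, $r'=\e^{-R}\le r_0$. Now fix $\varphi\in(\varphi_0,\pi)$, and for $s\in G(\varphi,r)$ (with $r$ to be chosen small) let $\sigma$ be its coordinate and solve $h(\sigma^*)=h(\sigma)+\lambda t$ for $\sigma^*$ near $\sigma-\lambda t$: since $h'\approx-1$ and $h(\sigma-\lambda t)=h(\sigma)+\lambda t+o(1)$, a quantitative inverse-function (contraction) argument produces $\sigma^*=\sigma-\lambda t+o(1)$, lying in $\Sigma$ with $\Re\sigma^*>R$ once $r$ is small enough that $\Re\sigma=\log\frac1{|1-s|}$ is large, and satisfying $|\Im\sigma^*|\le|\Im\sigma|+o(1)<(\pi-\varphi)+(\varphi-\varphi_0)=\pi-\varphi_0$ for $r$ small. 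Hence $\e^{\lambda t}g(s)=\e^{h(\sigma^*)}\in g(G(\varphi_0,r'))$, so $\widetilde F_t(s):=g^{-1}(\e^{\lambda t}g(s))=1-\e^{-\sigma^*}$ is well defined, analytic on $G(\varphi,r)$, and lies in $G(\varphi_0,r')\subset G(\varphi_0,r_0)$.

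Finally I would identify $\widetilde F_t$ with $F_t$. For real $s\in(1-r,1)\subset(q,1)$ one has $g(F_t(s))=\e^{\lambda t}g(s)$ — this is \eqref{eq_exp_Ft_a}, or follows directly from Kolmogorov's backward equation together with $g'/g=\lambda/a$ — and $F_t(s)\in(s,1)\subset G(\varphi_0,r')$, so $\widetilde F_t=F_t$ on $(1-r,1)$; as $F_t$ is analytic in $\D$ and this interval accumulates at interior points of the open connected set $\D\cap G(\varphi,r)$, the identity theorem forces $\widetilde F_t\equiv F_t$ on $\D\cap G(\varphi,r)$, so $\widetilde F_t$ is the desired analytic extension. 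The main obstacle is this injectivity-and-mapping step: proving global injectivity of $g$ on a sector abutting the singular point $1$, and controlling the image $g(G(\varphi_0,r'))$ and the distortion of $g^{-1}(\e^{\lambda t}g(\cdot))$ finely enough that the extension still lands in $G(\varphi_0,r_0)$ while only the aperture shrinks, from $\pi-\varphi_0$ to $\pi-\varphi$. What makes the angular bookkeeping close is that $1-\widetilde F_t(s)\sim\e^{\lambda t}(1-s)$, so $\arg(1-\widetilde F_t(s))-\arg(1-s)=o(1)$; and the assumption $\gamma>1$ is used exactly to make the error term in $F^*$ converge, giving $g$ its clean $\sim(1-s)(\log\frac1{1-s})^c$ behaviour at $1$.
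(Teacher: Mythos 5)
Your proof is correct and follows essentially the same path as the paper: your $g$ satisfies $\log g = A + \text{const}$, where $A$ is the paper's antiderivative of $\lambda/a$, so $g^{-1}(\e^{\lambda t}g(\cdot))$ is exactly the paper's extension $A^{-1}(A(\cdot)+\lambda t)$, and your analysis in the strip variable $\sigma=\log\frac1{1-s}$ reproves inline the content of Lemma~\ref{lem_inversion_A} (together with its supporting Lemma~\ref{lem_injective_log}). The only differences are stylistic: you phrase the functional equation multiplicatively via \eqref{eq_exp_Ft_a} rather than additively via \eqref{eq_int_Ft_a}, and you do not factor out the inversion step as a separate lemma.
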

\begin{proof}
Recall that $\lambda > 0$. By hypothesis, we can then assume that $a(s) \ne 0$ in $G(\varphi_0,r_0)$ by choosing $r_0$ small enough. Then $\lambda/a$ has an antiderivative $A$ on $G(\varphi_0,r_0)$. Define $B(s) = A(1-s)$ for $s\in H(\pi-\varphi_0,r_0)$, such that \[B'(s) = \frac{1}{s(1-c/\log s + O(|\log s|^{-\gamma}))} = \frac 1 s + \frac c {s\log s} + O\left(\frac 1 {s |\log s|^{-\min(\gamma,2)}}\right).\] We can therefore apply Lemma \ref{lem_inversion_A} to $B$ and deduce that there exist $\varphi_1\in(\varphi_0,\varphi)$ and $r_1,r \in (0,r_0)$, such that $A$ is injective on $G(\varphi_1,r_1)$ and such that $A(s) + \lambda t\in A(G(\varphi_1,r_1))$ for every $s\in G(\varphi,r)$. Hence, $\widetilde{F}_t(s) = A^{-1}(A(s) + \lambda t)$ is defined and analytic on $G(\varphi,r)$. By \eqref{eq_int_Ft_a}, $\widetilde{F}_t(s) = F_t(s)$ on $G(\varphi,r) \cap \D$, hence $\widetilde{F}_t$ is an analytic extension of $F_t$, mapping $G(\varphi,r)$ into $G(\varphi_1,r_1)\subset G(\varphi_0,r_0)$ by definition.
\end{proof}

\section{Proof of Theorem \ref{th_density}}
\label{sec_proofdensity}
We turn back to branching Brownian motion and to our Galton--Watson process $Z=(Z_x)_{x\ge 0}$ of the number of individuals absorbed at the point $x$. Throughout this section, we place ourselves under the hypotheses of Theorem \ref{th_density}, i.e.\ we assume that $c\ge c_0 = \sqrt{2 m}$ and that the radius of convergence of $f(s) = E[s^L]$ is greater than $1$. The equation $\lambda^2 - 2c\lambda + c_0^2 = 0$ then has the solutions $\lc = c - \sqrt{c^2-c_0^2}$ and $\lcb = c + \sqrt{c^2-c_0^2}$, hence $\lc = \lcb = c_0$ if $c = c_0$ and $\lc < c_0 < \lcb$ otherwise. The ratio $d = \lcb/\lc$ is therefore greater than or equal to one, according to whether $c > c_0$ or $c = c_0$, respectively. Recall further that $\delta\in\N$ denotes the span of $L-1$.

Let $a(s) = \alpha(\sum_{k\ge0} p_ks^k - s)$ be the infinitesimal generating function of $Z$ and let $F_x(s)=E[s^{Z_x}]$. We recall the equation \eqref{eq_a} from Section \ref{sec_FKPP}: For $s\in[0,1]$,
\begin{equation}
\label{eq_a_recalled}
a'(s)a(s) = 2ca(s) + 2(s-f(s)).
\end{equation}
By the analytic continuation principle, this equation is satisfied on the domain of analyticity of $a(s)$, in particular, on $\D$.

We now give a quick overview of the proof. Starting point is the equation \eqref{eq_a_recalled}. We are going to see that this equation is closely related to the Briot--Bouquet equation \eqref{eq_briot_bouquet} with $\lambda = d$. The representation of the solution to this equation given by Fact\;\ref{fact_bb_general} will therefore enable us to derive asymptotics for $a(s)$ near its singular point $s=1$ (Theorem\;\ref{th_asymptotics}). Via the results in Section\;\ref{sec_GW}, we will be able to transfer these to the functions $F_x(s)$ (Corollary\;\ref{cor_asymptotics_Ft}). Finally, the theorems of Flajolet and Odlyzko in Section \ref{sec_singanalysis} yield the asymptotics for $q_n$ and $P(Z_x = n)$.

More specifically, we will see that the main singular term in the expansion of $a(1-s)$ or $F_x(1-s)$ near $s=0$ is $s^d$, if $d\notin\N$ and $s^d \log s$, if $d\in\N$. At first sight, this dichotomy might seem strange, but it becomes evident if one remembers that we expect the coefficients of $F_x(s)$ (i.e.\ the probabilities $P(Z_x = n)$, assume $\delta = 1$) to behave like $1/n^{d+1}$, if $d > 1$ (see Proposition \ref{prop_heavytail}). In light of Fact \ref{th_singanalysis_explicit}, a logarithmic factor must therefore appear if $d$ is a natural number, otherwise $F_x(s)$ would be analytic at 1, in which case its coefficients would decrease at least exponentially.

We start by determining the singular points of $a(s)$ and $F_x(s)$ on the boundary of the unit disk, which is the content of the next three lemmas.

\begin{lemma}
\label{lem_delta}
Let $X$ be a random variable with law $(p_k)_{k\in\N_0}$ and let $x>0$. Then the spans of $X-1$ and of $Z_x-1$ are equal to $\delta$.
\end{lemma}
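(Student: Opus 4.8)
The plan is to show that $\delta$ divides each of the two spans and that, conversely, each span divides $\delta$; write $\delta_X=\operatorname{span}(X-1)$ and $\delta_Z=\operatorname{span}(Z_x-1)$, the latter possibly depending on $x$ for now.

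For the divisibility by $\delta$ I would use a pathwise modular invariant. Since $L-1\in\delta\Z$ almost surely, the integer $|N_t|+A_t$, where $N_t$ is the set of particles alive at time $t$ and $A_t$ the number of particles already absorbed at the barrier, changes only at branching events, where it increases by $L_u-1\in\delta\Z$, and not at absorption events, where a particle merely passes from $N_t$ into the absorbed count; hence $|N_t|+A_t\equiv|N_0|+A_0=1\pmod\delta$ for all $t\ge0$. As $c\ge c_0$ the process is almost surely extinct, so on a full-measure event $|N_t|=0$ and $A_t=Z_x$ for all large $t$, and therefore $Z_x\equiv1\pmod\delta$ almost surely; in particular $\P(Z_x=n)=0$ whenever $n\not\equiv1\pmod\delta$. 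Letting $x\downarrow0$ in $q_n=\lim_{x\downarrow0}x^{-1}\P(Z_x=n)=\alpha p_n$ gives $p_n=0$ for $n\not\equiv1\pmod\delta$ as well, so $\delta\mid\delta_X$ and $\delta\mid\delta_Z$.

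For the reverse divisibility I would exhibit, for each $j$ in the support of $L-1$, a positive-probability configuration of the branching Brownian motion with $Z_x=1+j$. Since $L\in\Nn\setminus\{1\}$, either $j\ge1$ or $j=-1$. If $j\ge1$, take the event that the initial particle branches before hitting the barrier, produces exactly $1+j$ children at its death site $Y$, and that each of these children then hits the barrier before its own branching time; on this event $Z_x=1+j$. Using $\E^{y}[\e^{-\mu T_x}]=\e^{(x-y)(c-\sqrt{c^2+2\mu})}$ for Brownian motion with drift $c>0$ ($T_x$ being the hitting time of $x$), the first event has probability $1-\e^{x(c-\sqrt{c^2+2})}\sim(\sqrt{c^2+2}-c)\,x$, the second probability $\P(L=1+j)>0$ independently of the rest, and the third, conditionally on $Y$, probability $\e^{-(1+j)(x-Y)(\sqrt{c^2+2}-c)}$, which is bounded below uniformly on $\{Y>-1\}$, an event whose probability tends to $1$ as $x\downarrow0$. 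Hence $\P(Z_x=1+j)\ge c_1x$ for small $x$ with $c_1>0$, so in particular $q_{1+j}=\alpha p_{1+j}>0$. If $j=-1$, the event that the initial particle dies childless before reaching the barrier has probability $\sim\P(L=0)(\sqrt{c^2+2}-c)\,x$ and forces $Z_x=0=1+j$, so again $\P(Z_x=1+j)\gtrsim x$ and $q_0>0$. In every case $1+j$ belongs to the supports of $Z_x$ and of $X$, so $\operatorname{supp}(L-1)\subseteq\operatorname{supp}(Z_x-1)\cap\operatorname{supp}(X-1)$, and passing to greatest common divisors yields $\delta_Z\mid\delta$ and $\delta_X\mid\delta$. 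Together with the previous paragraph this gives $\delta_X=\delta_Z=\delta$.

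The main obstacle is the hitting-time estimate in the last step, more precisely obtaining it of exact linear order in $x$ rather than merely positive; the linear order is what is needed for $q_{1+j}>0$, i.e.\ for the statement about $\operatorname{span}(X-1)$ as opposed to $\operatorname{span}(Z_x-1)$ alone. The Brownian formulas above make this routine, if a little delicate. One could avoid it: since $(Z_x)_{x\ge0}$ is the continuous-time Galton--Watson process of \eqref{eq_def_a}, non-explosive because $\E[Z_x]=\e^{\lc x}<\infty$, the set $\{n:\P(Z_x=n)>0\}$ equals the set of states reachable from $1$ by finite admissible jump chains and is hence independent of $x$, and a short argument then shows $\operatorname{span}(X-1)=\operatorname{span}(Z_x-1)$, so that the crude fact $\P(Z_x=1+j)>0$ would suffice. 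Alternatively, $\operatorname{span}(X-1)\mid\delta$ follows algebraically from \eqref{eq_a_recalled}: when $\P(L=0)=0$ the series $a(s)/s$, and hence $a'(s)$ and $(s-f(s))/s$, are power series in $s^{\operatorname{span}(X-1)}$, which forces $L-1\in\operatorname{span}(X-1)\cdot\Z$, while the case $\P(L=0)>0$ is immediate since then $\delta=\operatorname{span}(X-1)=1$.
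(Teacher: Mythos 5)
Your proof is correct and rests on the same observation the paper's one-line argument gestures at: the quantity $|N_t|+A_t$ changes only by increments $L_u-1\in\delta\Z$ at branching events, so $Z_x\equiv1\pmod\delta$ almost surely (using that $c\ge c_0$ forces almost sure extinction, so that $|N_t|\to0$ and $A_t\to Z_x<\infty$), and letting $x\downarrow0$ transfers this to the jump law $(p_k)$. What you add is the converse direction, which the paper does not spell out: exhibiting, for each $j\in\operatorname{supp}(L-1)$, a positive-probability scenario giving $Z_x=1+j$ and showing it occurs with probability of order $x$, so that $q_{1+j}>0$ and hence each span divides $\delta$. You correctly identify the linear-in-$x$ lower bound as the one delicate point, and both of your workarounds are sound and arguably cleaner than the hitting-time computation: the accessible-state set of the non-explosive continuous-time Galton--Watson process $(Z_x)_{x\ge0}$ is indeed independent of $x>0$, which reduces matters to $P(Z_x=1+j)>0$, and the algebraic reading of \eqref{eq_a_recalled} (factoring $a(s)=s\,h(s^{d})$ with $d=\operatorname{span}(X-1)$ and concluding that $(s-f(s))/s$ is a power series in $s^{d}$) directly forces $d\mid\delta$. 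In short, this is a fully worked-out version of what the paper compresses into a single sentence, covering both directions of divisibility where the paper only makes one explicit.
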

\begin{proof}
This follows from the fact that the BBM starts with one individual and the number of individuals increases by $l-1$ when an individual gives birth to $l$ children.
\end{proof}

\begin{lemma}
\label{lem_h}
If $\delta = 1$, then $a(s)$ and $(F_x(s))_{x>0}$ are analytic at every $s_0\in\partial\D\backslash\{1\}$. If $\delta \ge 2$, then there exist a function $h(s)$ and a family of functions $(h_x(s))_{x>0}$, all analytic on $\D$, such that
\[a(s) = sh(s^\delta)\quad\tand\quad F_x(s) = sh_x(s^\delta),\]
for every $s\in\D$. Furthermore, $h$ and $(h_x)_{x>0}$ are analytic at every $s_0\in\partial\D\backslash\{1\}$.
\end{lemma}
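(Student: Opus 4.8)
\textbf{Proof plan for Lemma \ref{lem_h}.}

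The plan is to exploit the fact that the span of $Z_x - 1$ (and of $X-1$, where $X$ has law $(p_k)$) equals $\delta$, as established in Lemma \ref{lem_delta}, and to translate this arithmetic structure of the supports into multiplicative structure of the generating functions. Since $L - 1$ is concentrated on $\delta\Z$, the generating function $f(s) = E[s^L]$ has the form $f(s) = s\,g(s^\delta)$ for some power series $g$ analytic on the disk of convergence of $f$, which by hypothesis strictly contains $\overline{\D}$; in particular $g$ is analytic on a neighbourhood of $\overline{\D}$. Likewise, because $Z_x - 1$ is concentrated on $\delta\Z$ and $Z_x \ge 1$ a.s.\ (the root individual is always counted, or more precisely $Z_x$ takes values in $1 + \delta\Nn$ by Lemma \ref{lem_delta}), each coefficient $P(Z_x = \delta n + 1)$ is the only nonzero one in its residue class, so $F_x(s) = s\,h_x(s^\delta)$ with $h_x(s) = \sum_{n\ge 0} P(Z_x = \delta n+1)\,s^n$ a power series with radius of convergence at least $1$, hence analytic on $\D$. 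The same reasoning applied to $(p_k)$ — whose support lies in $1 + \delta\Nn$ by Lemma \ref{lem_delta} — shows $\sum_k p_k s^k = s\,\tilde h(s^\delta)$, and combining with $f(s) = s g(s^\delta)$ in the definition $a(s) = \alpha(\sum_k p_k s^k - s)$ gives $a(s) = s\,h(s^\delta)$ with $h(s) = \alpha(\tilde h(s) - 1)$, analytic on $\D$.

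It remains to prove the analyticity of $a(s)$, $F_x(s)$ (when $\delta=1$) and of $h$, $h_x$ (when $\delta\ge2$) at points $s_0$ of $\partial\D$ other than those on the ``$\delta$-ray'' mapping to $1$. First I would handle $a(s)$. Since $a$ is analytic on $\D$, it suffices to produce an analytic extension across each boundary point $s_0 \ne 1$ (for $\delta=1$) or each $s_0$ with $s_0^\delta \ne 1$ (for $\delta\ge2$). For this I use the differential equation \eqref{eq_a_recalled}, rewritten near a boundary point: wherever $a(s)\ne0$ one has $a'(s) = 2c + 2(s - f(s))/a(s)$, which is of the form $a' = \Phi(a,s)$ with $\Phi$ analytic in a region of $\C^2$ avoiding $\{w=0\}$; Fact \ref{fact_extend_border} (Painlevé) then extends $a$ analytically across $s_0$ as soon as $a$ is continuous at $s_0$ with $a(s_0) \ne 0$. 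So the crux is the dichotomy: either $a$ stays bounded away from $0$ near $s_0$, in which case Painlevé applies directly; or $a(s)\to0$ along some sequence approaching $s_0$. I would rule the latter out by a modulus argument: the only zero of $a$ in $\overline{\D}$ is $q < 1$ on the real axis (this is where the hypothesis $s_0\ne1$, resp.\ $s_0^\delta\ne1$, is used, since near $s=1$ one genuinely has $a\to0$), and on $\partial\D$ strict inequalities among coefficients force $|a(s)|$ to be minimised on the circle only at the relevant root of unity — essentially the strong triangle inequality $|\sum p_k s^k| \le \sum p_k |s|^k$ with equality only when all the $s^k$, $k$ in the support, have the same argument, which for $|s|=1$ pins $s$ down modulo the span. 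Thus $a$ extends analytically across every such $s_0$, and when $\delta\ge2$ the identity $a(s) = s h(s^\delta)$ transfers this to $h$ at $s_0^\delta$ via the local biholomorphism $s\mapsto s^\delta$ away from $0$.

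For $F_x(s)$ (resp.\ $h_x$), I would invoke the machinery of Section \ref{sec_GW}: by Proposition \ref{prop_FKPP}, case 1, we are in the situation $a(1)=0$, $a'(1)=\lc>0$, with $q<1$ the unique root of $a$ in $[0,1)$, so Corollaries \ref{cor_1regular} and \ref{cor_analyticity_Ft} apply. Concretely, since $a$ has just been shown to be analytic (hence in particular analytic with nonvanishing value, or with a controlled simple zero) in a neighbourhood of each boundary point $s_0\ne1$ of $\partial\D$, and since $F_x$ is obtained from $a$ by the flow relation \eqref{eq_int_Ft_a}–\eqref{eq_exp_Ft_a}, the argument of Corollary \ref{cor_1regular} — constructing $g(s) = (1-s)\exp(F^*(s))$ with $F^*$ an antiderivative of $f^*$ near $s_0$ and inverting — shows $F_x$ extends analytically across $s_0$. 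One must check that the simply-connected regions $G$, $D$ required in Proposition \ref{prop_exp_Ft_a} can be chosen to straddle $s_0$, which follows because $a$ has no zero near $s_0$ and $F_x$ maps a small one-sided neighbourhood of $s_0$ into the region where $1/a$ is analytic; this is routine once the zero-set of $a$ is understood. Finally, for $\delta\ge2$, writing $F_x(s) = s h_x(s^\delta)$ and using that $s\mapsto s^\delta$ is a local biholomorphism at any $s_0$ with $s_0 \ne 0$ transfers analyticity of $F_x$ at $s_0$ to analyticity of $h_x$ at $s_0^\delta$, and every point of $\partial\D\setminus\{1\}$ is of the form $s_0^\delta$ for such an $s_0$.

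\textbf{Main obstacle.} I expect the delicate point to be not the algebra (the factorisations $f(s)=sg(s^\delta)$, $F_x(s)=sh_x(s^\delta)$, $a(s)=sh(s^\delta)$ are immediate from the span) but the clean verification that $a(s)$ does not vanish near any boundary point other than the obvious ray, together with assembling the simply-connected neighbourhoods so that Fact \ref{fact_extend_border} and Proposition \ref{prop_exp_Ft_a} genuinely apply. In particular one needs the strict convexity of $a$ on $[0,1]$ to pin down $q$ as the \emph{unique} zero in the closed disk off the boundary, and a careful equality-case analysis in the triangle inequality to exclude boundary zeros away from the span-ray; this is where the hypothesis on $s_0$ is really consumed.
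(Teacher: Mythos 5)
Your algebraic factorisation and your treatment of $a(s)$ match the paper: the decomposition $a(s)=s\,h(s^\delta)$, $F_x(s)=s\,h_x(s^\delta)$ follows immediately from Lemma~\ref{lem_delta}, and the extension of $a$ across $s_0$ via Painlev\'e's theorem (Fact~\ref{fact_extend_border}) applied to $a' = (2ca+2(s-f(s)))/a$ is exactly the paper's move. Your ``strict inequality in the triangle inequality, with equality forcing the $s^k$ to align'' is precisely what the paper imports as Feller's Lemma~XV.2.3, so that part is sound, though you should note that one also needs continuity of $a$ at $s_0$ (absolute summability of $(p_k)$), which the paper states explicitly.

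Where you depart from the paper — and where there is a genuine gap — is the $F_x$ part. You propose to ``invoke'' Corollaries~\ref{cor_1regular} and~\ref{cor_analyticity_Ft}. Neither applies here: Corollary~\ref{cor_1regular} assumes $1$ is a \emph{regular} point of $a$, which is false in our setting (Lemma~\ref{lem_1_is_singularity}), and Corollary~\ref{cor_analyticity_Ft} is an assertion about $\Delta$-domains at $s=1$, not about arbitrary boundary points. What you actually want is to \emph{adapt} the argument of Proposition~\ref{prop_exp_Ft_a}, i.e.\ use $\int_s^{F_x(s)} a(r)^{-1}\,\dd r = x$, take an antiderivative $A$ of $1/a$ near $s_0$ and near $F_x(s_0)$, and write $F_x = A^{-1}(A(\cdot)+x)$. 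But for $A$ to be locally invertible at $F_x(s_0)$ you need $a(F_x(s_0)) \ne 0$, i.e.\ $F_x(s_0) \ne q$. You never address this, and it is not obviously true: all you get from the span argument is $|F_x(s_0)|<1$, and nothing prevents $F_x(s_0)$ from landing on the real zero $q\in[0,1)$ for some $s_0\in\partial\D$ with $s_0^\delta\ne1$. In addition, setting up the simply-connected regions $G$, $D$ of Proposition~\ref{prop_exp_Ft_a} so that $D$ straddles $s_0$, $D\cap(0,1)\ne\varnothing$, $F_x(D)\subset G$ and $G$ avoids $Z_a$ is not ``routine''; it requires real work that you leave unspecified.

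The paper sidesteps both issues entirely: it differentiates the forward/backward equations to obtain $a(s)F_x'(s) = a(F_x(s))$ on $\D$, and then applies Painlev\'e (Fact~\ref{fact_extend_border}) to the differential equation $F_x' = a(F_x)/a(s)$. The right-hand side $\Phi(w,s)=a(w)/a(s)$ is analytic at $(F_x(s_0),s_0)$ as soon as $a(s_0)\ne0$ (already established) and $a$ is analytic at $F_x(s_0)$ — which is automatic from $|F_x(s_0)|<1$, obtained by applying the same Feller lemma to the coefficients of $F_x$ via Lemma~\ref{lem_delta}. Note this requires no information about whether $a$ \emph{vanishes} at $F_x(s_0)$, and no global path bookkeeping. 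You should replace your integral-equation route for $F_x$ with this local differential-equation argument; otherwise you must additionally prove $F_x(s_0)\ne q$, and you must state explicitly that $|F_x(s_0)|<1$ follows from the span of $Z_x-1$ being $\delta$.
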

\begin{proof}
Assume first that $\delta \ge 2$. Define
\[h(s) = \alpha(\sum_n p_{1+\delta n} s^n - 1)\quad\tand\quad h_x(s) = \sum_n P(Z_x = 1+\delta n)s^n.\] By Lemma \ref{lem_delta}, $p_{k+\delta n} = P(Z_x = k+\delta n) = 0$ for every $k\in\{2,\ldots,\delta\}$ and $n\in\Z$, whence $a(s) = sh(s^\delta)$ and $F_x(s) = sh_x(s^\delta)$ for every $s\in\D$.

We now claim that $a$ and $F_x$ are analytic at every $s_0\in\partial\D$ with $s_0^\delta \ne 1$. Note that if $\delta \ge 2$, this implies that $h$ and $h_x$ are analytic at every $s_0\in\partial\D\backslash\{1\}$, since the function $s\mapsto s^\delta$ has an analytic inverse in a neighbourhood of any $s\ne0$.

First note that by \cite{Fel1971}, Lemma\;XV.2.3, p.\;475, we have $|\sum_n p_n s_0^n| < 1$ for every $s_0\in\partial\D$, such that $s_0^\delta \ne 1$, whence $a(s_0)\ne 0$. Now write the differential equation \eqref{eq_a_recalled} in the form
\begin{equation*}
\label{eq_a_with g}
a' = \frac{2ca + 2(s-f(s))}{a} =: g(a,s).
\end{equation*}
Since the radius of convergence of $f$ is greater than 1 by hypothesis, $g$ is analytic at $(a(s_0),s_0)$. Furthermore, $a$ is continuous at $s_0$, since $\sum_np_ns^n$ converges absolutely for every $s\in\Db$. Fact\;\ref{fact_extend_border} now shows that $a$ is analytic at $s_0$.

It remains to show that $F_x$ is analytic at $s_0$. Kolmogorov's forward and backward equations \eqref{eq_fwd} and \eqref{eq_bwd} imply that $a(s)F_x'(s) = a(F_x(s))$ on $[0,1]$, and the analytic continuation principle implies that this holds on $\D$. Now, let $s_0\in\partial\D$, such that $s_0^\delta \ne 1$. Then we have just shown that $a$ is analytic and non-zero at $s_0$. Furthermore, $|F_x(s_0)|<1$, by the above stated lemma in \cite{Fel1971} and Lemma \ref{lem_delta}. Thus, the function $f(w,s) = a(w)/a(s)$ is analytic at $(F_x(s_0),s_0)$, hence we can apply Fact\;\ref{fact_extend_border} again to conclude that $F_x$ is analytic at $s_0$ as well.
\end{proof}

The next lemma ensures that we can ignore certain degenerate cases appearing in the course of the analysis of \eqref{eq_a}. It is the analytic interpretation of the probabilistic results in Section \ref{sec_probability}.

\begin{lemma}
\label{lem_1_is_singularity}
1 is a singular point of $a(s)$. If $c = c_0$, then $a''(1) = +\infty$.
\end{lemma}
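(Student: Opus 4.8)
The plan is to establish the singularity claim by contradiction, playing the analytic structure of $a$ against the probabilistic tail bounds already available, and to read off $a''(1)=+\infty$ in the critical case directly from the estimate obtained inside the proof of Theorem~\ref{th_tail}. The first observation I would record is that the standing hypothesis of Theorem~\ref{th_density} — that $f(s)=E[s^L]$ has radius of convergence greater than $1$ — forces $L$ to have moments of all orders; in particular $E[L^2]<\infty$ and $E[L(\log L)^{2+\ep}]<\infty$ for every $\ep>0$, so that Proposition~\ref{prop_heavytail}, Theorem~\ref{th_tail}, and the intermediate asymptotic \eqref{eq_aprimeprime} all apply here.

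Consider first the critical case $c=c_0$. Then \eqref{eq_aprimeprime} gives $a''(1-s)\sim c_0/(s(\log\tfrac1s)^2)$ as $s\downarrow 0$, so $a''(1-s)\to+\infty$, which is exactly the assertion $a''(1)=+\infty$. In particular $a$ has unbounded second derivative as $s\uparrow 1$, so it cannot be analytic at $1$, and hence $1$ is a singular point of $a(s)$ in this case.

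Now take $c>c_0$ and suppose, for contradiction, that $1$ is a regular point of $a(s)$. Since $a(1)=0$ and $a'(1)=\lc\in(0,\infty)$ by Proposition~\ref{prop_FKPP}, Corollary~\ref{cor_1regular} applies and shows that $1$ is a regular point of $F_x(s)$ for every $x>0$. I would then combine this with Lemma~\ref{lem_h}. If $\delta=1$, Lemma~\ref{lem_h} makes $F_x$ analytic at every point of $\partial\D\setminus\{1\}$, so together with regularity at $1$ and analyticity on $\D$ it is analytic on an open disc $\D(0,1+\eta)$; hence its Taylor coefficients $P(Z_x=n)$ decay geometrically. If $\delta\ge 2$, write $F_x(s)=sh_x(s^\delta)$ with $h_x$ analytic on $\D$; Lemma~\ref{lem_h} gives $h_x$ analytic at every point of $\partial\D\setminus\{1\}$, and near $s=1$ the map $s\mapsto s^\delta$ is biholomorphic, so $h_x(u)=F_x(u^{1/\delta})/u^{1/\delta}$ is analytic at $u=1$ as well; thus $h_x$ is analytic on some $\D(0,1+\eta)$ and the coefficients $P(Z_x=\delta n+1)=[u^n]h_x(u)$ decay geometrically. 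In either case $P(Z_x>n)$ decays geometrically in $n$, contradicting Proposition~\ref{prop_heavytail}, which gives $P(Z_x>n)\ge C/n^d$ for large $n$. Therefore $1$ is a singular point of $a(s)$, which completes the argument.

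The only genuinely substantial input is \eqref{eq_aprimeprime}, but that is already proven in Section~\ref{sec_tail}; granting it and Proposition~\ref{prop_heavytail}, the present lemma is essentially bookkeeping. The one step that deserves a little care is the passage, when $\delta\ge 2$, from ``$F_x$ regular at $1$'' to ``$h_x$ analytic on a full neighbourhood of $\Db$'', which is exactly what the local change of variables $u=s^\delta$ together with the boundary regularity recorded in Lemma~\ref{lem_h} provides.
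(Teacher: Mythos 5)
Your proof is correct, and it follows essentially the same route as the paper: the critical case is read off from the asymptotic $a''(1-s)\sim c_0/(s(\log\tfrac1s)^2)$ established in the proof of Theorem~\ref{th_tail}, and the subcritical case combines Proposition~\ref{prop_heavytail} with Corollary~\ref{cor_1regular}. The one cosmetic difference is in the final step of the subcritical argument: the paper observes directly that Proposition~\ref{prop_heavytail} forces $E[s^{Z_x}]=\infty$ for $s>1$, so Pringsheim's theorem (non-negative Taylor coefficients) makes $1$ a singular point of $F_x$ without any appeal to Lemma~\ref{lem_h} or to the compactness argument you use to cover the rest of the circle. Both routes are valid; Pringsheim is marginally shorter and avoids the small extra care you correctly flag about passing from $F_x$ to $h_x$ when $\delta\ge2$.
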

\begin{proof}
If $c=c_0$, the second assertion follows from Theorem \ref{th_tail} or from Neveu's result that $E[Z_x\log^+ Z_x] = \infty$ for $x > 0$ (see the remark before Theorem \ref{th_tail}). This implies that 1 is a singular point of $a(s)$. If $c > c_0$, Proposition \ref{prop_heavytail} implies that $E[s^{Z_x}] = \infty$ for every $s > 1$, whence $1$ is a singular point of the generating function $F_x(s)$ by Pringsheim's theorem (\cite{FS2009}, Theorem\;IV.6, p.\ 240). By Corollary \ref{cor_1regular}, it follows that 1 is a singular point of $a(s)$ as well.
\end{proof}

The next theorem is the core of the proof of Theorem \ref{th_density}.

\begin{theorem}
\label{th_asymptotics}
Under the assumptions of Theorem \ref{th_density}, for every $\varphi\in (0,\pi)$ there exists $r>0$, such that $a(s)$ possesses an analytical extension (denoted by $a(s)$ as well) to $G(\varphi,r)$. Moreover, as $1-s\to 1$ in $G(\varphi,r)$, the following holds.
\begin{itemize}[label=$-$]
\item If $d=1$, then
\begin{equation}
\label{eq_asymptotics_a_1}
a(1-s) = -c_0 s + c_0 \frac{s}{\log\frac{1}{s}} - c_0s \frac{\log\log\frac{1}{s}}{(\log\frac{1}{s})^2} + \Otilde\left(\frac{s}{(\log\frac{1}{s})^2}\right).
\end{equation}
\item If $d > 1$, then there is a $K = K(c,f)\in\C\backslash\{0\}$ and a polynomial $P(s)=\sum_{n=2}^{\lfloor d \rfloor} c_n s^n$, such that 
\begin{align}
\label{eq_asymptotics_a_not_N}
&\tif d\notin\N:\quad a(1-s) = -\lc s + P(s) + K s^d + o(s^d),\\
\label{eq_asymptotics_a_N}
&\tif d\in\N:\quad a(1-s) = -\lc s + P(s) + K s^d \log s + o(s^d).
\end{align}
\end{itemize}
\end{theorem}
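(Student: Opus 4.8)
\emph{Overall strategy.} Everything flows from the autonomous equation \eqref{eq_a_recalled}, which I would treat as a complex ODE near its singular point $s=1$. Put $t=1-s$ and $u(t)=a(1-t)$, so that \eqref{eq_a_recalled} becomes $u\,u'=-2cu-2G(t)$ with $G(t):=(1-t)-f(1-t)=mt+[t]_2$; equivalently, this is the planar system $\dot u=-2cu-2G(t)$, $\dot t=u$, whose linearisation at the origin has eigenvalues $-\lc$ and $-\lcb$. Since $a'(1)=\lc$ by Proposition \ref{prop_FKPP}, the orbit of interest is the one tangent to the $-\lc$-direction, so $u(t)=-\lc t+o(t)$. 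The plan is to introduce the rescaled deviation $w(t):=\bigl(a(1-t)+\lc t\bigr)/t$ as a new unknown (a function of $t$, with $w(0+)=0$), to recognise the equation it satisfies as a Briot--Bouquet equation \eqref{eq_briot_bouquet}, and to read off the singular expansion of $a$ from the description of the general solution given by Fact \ref{fact_bb_general}.

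\emph{The case $d>1$.} Writing $u=-\lc t+tw$ and using $\lc^2-2c\lc+c_0^2=0$, equation \eqref{eq_a_recalled} collapses, after division by $t$, to $t\,w'=\bigl((\lcb-\lc)w+w^2+H(t)\bigr)/(\lc-w)$, where $H(t):=2[t]_2/t$ is analytic with $H(0)=0$. The right-hand side is analytic at $(w,t)=(0,0)$ and vanishes there, so this is \eqref{eq_briot_bouquet} with $\lambda=(\lcb-\lc)/\lc=d-1>0$ (equivalently, the deviation $a(1-t)+\lc t=tw$ scales with exponent $d$, as announced in the overview). By Fact \ref{fact_bb_general} and the last paragraph of Remark \ref{rem_bb_general}, $w$ — hence $a$ — then continues analytically to the slit disc $1-H(\pi,r)=G(0,r)$ about $1$, a fortiori to $G(\varphi,r)$ for every $\varphi\in(0,\pi)$, and admits the representation $w=\psi(t,v)$ with $\psi$ analytic, $\psi(0,0)=0$, $\partial_v\psi(0,0)=1$, and $v=Ct^{d-1}$ if $d\notin\N$, $v=t^{d-1}(C+K\log t)$ if $d\in\N$. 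Consequently $a(1-t)=-\lc t+t\,\psi(t,v)$: here $t\,\psi(t,0)$ is analytic in $t$ and vanishes to order $\ge2$, so together with $-\lc t$ it produces $-\lc t$ plus the polynomial $P(s)=\sum_{n=2}^{\lfloor d\rfloor}c_ns^n$ (its tail of order $>\lfloor d\rfloor$ being $o(t^d)$); the term $t\cdot v$ produces $K\,t^d$ if $d\notin\N$ and $K\,t^d\log t$ if $d\in\N$; and every remaining monomial of $t\,\psi(t,v)$ has exponent $>d$, hence is $o(t^d)$. This gives \eqref{eq_asymptotics_a_not_N}--\eqref{eq_asymptotics_a_N}. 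Finally $K\neq0$, for otherwise $v$ would be either $0$ or the pure integer power $Ct^{d-1}$, making $\psi(t,v)$ — and with it $a$ — analytic at $1$, contrary to Lemma \ref{lem_1_is_singularity}; this is where Proposition \ref{prop_heavytail} is needed.

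\emph{The case $d=1$ ($c=c_0$), and the main difficulty.} Now $\lc=\lcb=c_0$, the linearisation is a genuine Jordan block (cf.\ \eqref{eq_Jordan_critical}), and the Briot--Bouquet equation above degenerates to $t\,w'=c_0^{-1}w^2+p\,t+[w,t]_2$, i.e.\ \eqref{eq_briot_bouquet} with $\lambda=0$ — a degenerate case not covered by Fact \ref{fact_bb_general}, and this is where the real work lies. Since $H(t)=O(t)=o(w^2)$, the dominant balance is $t\,w'=c_0^{-1}w^2(1+o(1))$; setting $\theta:=1/w$ gives $t\,\theta'=-c_0^{-1}(1+o(1))$, so $\theta=c_0^{-1}L+O(1)$ with $L:=\log(1/t)$, hence $w=c_0/L+\cdots$, and one more iteration produces $w(t)=c_0/L-c_0\log L/L^2+\Otilde(1/L^2)$. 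I would make this rigorous by bootstrapping — either directly in the degenerate equation, or by refining the travelling-wave asymptotics behind \eqref{eq_phiprime_phi} (available here under the stronger exponential-moment hypothesis) and inserting them into $a(1-s)=\phi'(\phi^{-1}(s))$ — and I would secure the analytic continuation into $G(\varphi,r)$ by an inversion argument of the kind used for Corollary \ref{cor_analyticity_Ft}, i.e.\ the logarithmic analogue of the construction in Remark \ref{rem_bb_general}. Multiplying by $t$ then gives \eqref{eq_asymptotics_a_1}.
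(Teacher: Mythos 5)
Your treatment of the subcritical case $d>1$ is correct and is in fact an appreciable simplification of the paper's argument. Where the paper first parametrises the orbit $(b,s)=(a(1-s),s)$ by an artificial time variable, Jordan-decomposes the linear part \eqref{eq_Jordan_subcritical}, reduces $B$ as a function of $S$ to a Briot--Bouquet equation with parameter $\lambda=d$ only after the auxiliary Proposition~\ref{prop_reduction}, and finally must invert $\sfr\mapsto s$ via Lemmas~\ref{lem_inversion} and~\ref{lem_inversion_formula_d_ge_1}, your substitution $w(t)=\bigl(a(1-t)+\lc t\bigr)/t$ works directly in the original variable $t=1-s$: after clearing the benign denominator $\lc-w$ one has a genuine Briot--Bouquet equation in $t$ with $\lambda=d-1>0$, so no parametrisation, no Proposition~\ref{prop_reduction}, and no inversion step are needed. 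Since $d-1\in\N$ exactly when $d\in\N$ (for $d>1$), the dichotomy of Fact~\ref{fact_bb_general} gives the right logarithmic factor, the slit-disc extension follows from the last paragraph of Remark~\ref{rem_bb_general}, and your argument that $K\ne0$ via Lemma~\ref{lem_1_is_singularity} is the same as the paper's.

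The critical case $d=1$, however, is a genuine gap. Your substitution pushes the Briot--Bouquet parameter to $\lambda=d-1=0$, which Fact~\ref{fact_bb_general} explicitly excludes ($\lambda>0$ is required) and for which the representation $w=\psi(t,u)$ simply is not available; you note this yourself. The dominant-balance heuristic you carry out ($\theta=1/w$, $t\theta'=-c_0^{-1}+O(1/\theta)$, one more iteration) does land on the correct expansion, but the bootstrap that would turn it into a proof — control of the error terms, identification of the $\Otilde$ constant, and above all the analytic extension of $w$ to $G(\varphi,r)$ — is only announced, not performed. The pointer to Corollary~\ref{cor_analyticity_Ft} does not help, because that corollary takes an already-extended $a$ as input and only propagates the extension to $F_t$; and ``the logarithmic analogue of Remark~\ref{rem_bb_general}'' is precisely the missing piece, not a reference one can cite. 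The paper sidesteps this entirely by choosing coordinates adapted to the Jordan block \eqref{eq_Jordan_critical}: in its variables the reduced equation \eqref{eq_bfr_sfr} has $\lambda=1>0$ even when $d=1$, so the same Briot--Bouquet machinery and the same continuation argument of Remark~\ref{rem_bb_general} cover both cases uniformly, with only the inversion Lemmas~\ref{lem_inversion} and~\ref{lem_inversion_formula_d_eq_1} added to return to the variable $s$. If you want to keep your simpler substitution, you would either need to supply a self-contained analysis of the degenerate $\lambda=0$ Briot--Bouquet singularity (including its complex continuation), or, at $d=1$, switch to a substitution that respects the Jordan structure; as written, the critical case is not proved.
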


\begin{proof}[Proof of Theorem \ref{th_asymptotics}]
We set $b(s) = a(1-s)$. By \eqref{eq_a_recalled},
\begin{equation}
\label{eq_b_first}
-b'(s)b(s) = 2cb(s) + 2 (1-s - f(1-s))\quad\ton\D(1,1).
\end{equation} Since $f$ is analytic at 1 by hypothesis, there exists $0 < \ep_1 < 1-q$ and a function $g$ analytic on $\D(0,\ep_1)$ with $g(0) = g'(0) = 0$, such that $f(1-s) = 1-(m+1)s+g(s)$ for $s\in \D(0,\ep_1)$.

As a first step, we analyse \eqref{eq_b_first} for real non-negative $s$. Since $\ep_1 < 1-q$, $b(s) < 0$ on $(0,\ep_1)$, whence we can divide both sides by $b(s)$ to obtain
\begin{equation}
\label{eq_b}
\frac{\dd b}{\dd s} = \frac{-2cb -c_0^2 s + 2 g(s)}{b}\quad\ton(0,\ep_1).
\end{equation}

Introduce the parameter $t(s) = \int_s^{\ep_1} \frac{\dd r}{-b(r)}$, $s\in(0,\ep_1]$, such that $t(\ep_1) = 0$, $t(0+) = +\infty$ and $t(s)$ is strictly decreasing on $(0,\ep_1]$. There exists then an inverse $s(t)$ on $[0,\infty)$, which satisfies $s'(t) = b(s(t))$. Hence, we have
\[\frac{\dd b}{\dd t} = \frac{\dd b}{\dd s}\frac{\dd s}{\dd t} = -2cb(t) - c_0^2 s(t) + 2 g(s(t))\quad\ton(0,\infty),\]
In matrix form, this becomes
\begin{equation}
\label{eq_bs}
\frac{\dd}{\dd t} \begin{pmatrix} b\\s \end{pmatrix} = M \begin{pmatrix} b\\s \end{pmatrix} + \begin{pmatrix} 2 g(s)\\0 \end{pmatrix},\quad M = \begin{pmatrix} -2c & -c_0^2\\ 1 & 0\end{pmatrix},
\end{equation} for $t\in(0,\infty)$. Note that this extends \eqref{eq_system_critical} to the subcritical case. This time, the Jordan decomposition of $M$ is given by
\begin{equation}
\label{eq_Jordan_subcritical}
A^{-1}MA = \begin{pmatrix}-\lcb & 0 \\ 0 & -\lc\end{pmatrix},\quad A = \begin{pmatrix}-\lcb & -\lc \\ 1 & 1\end{pmatrix},\quad\tif c > c_0,
\end{equation}
and by \eqref{eq_Jordan_critical}, if $c = c_0$. Setting 
\begin{equation}
\label{eq_bs_BS}
\begin{pmatrix} b \\ s \end{pmatrix} = A \begin{pmatrix}B\\S\end{pmatrix},
\end{equation}
transforms \eqref{eq_bs} into
\begin{alignat}{3}
\label{eq_BS_subcritical}
\frac{\dd B}{\dd t} &= -\lcb B + [B,S]_2, & \frac{\dd S}{\dd t} &= -\lc S + [B,S]_2, &\quad  \tif c &> c_0,\\
\label{eq_BS_critical}
\frac{\dd B}{\dd t} &= -c_0 B + S + [B,S]_2, &\quad \frac{\dd S}{\dd t} &= -c_0 S + [B,S]_2, & \tif c &= c_0,
\end{alignat}
for $t\in(0,\infty)$. Furthermore, by \eqref{eq_bs_BS}, we have 
\begin{align}
\label{eq_BSs}
s &= B + S,\\
\label{eq_S}
S &= \begin{cases}
(\lcb-\lc)^{-1} (b+\lcb s), & \tif d>1,\\
b + c_0 s, & \tif d=1,
\end{cases}\\
\label{eq_B}
B &= \begin{cases}
(\lc - \lcb)^{-1}(b+\lc s), & \tif d>1,\\
-b + (1 - c_0) s, & \tif d=1.
\end{cases}
\end{align}

From now on, let $\ep_2,\ep_3,\ldots$ be positive numbers that are as small as necessary. By the strict convexity of $b$ and the fact that $b'(0) = -\lc$ by Lemma \ref{lem_expectation_Z}, equation \eqref{eq_S} implies that $S$ is a strictly convex non-negative function of $s$ on $[0,\ep_2)$. This implies that the inverse $s = s(S)$ exists and is non-negative and strictly concave on $[0,\ep_3)$. It follows that $t(S)=t(s(S))$ exists on $[0,\ep_4)$. Equations \eqref{eq_BS_subcritical} and \eqref{eq_BS_critical} then yield for $S\in(0,\ep_4)$,
\begin{alignat}{2}
\label{eq_BS_subcritical_2}
\frac{\dd B}{\dd S} &= \frac{d B + [B,S]_2}{S + [B,S]_2}, &\quad & \tif c > c_0,\\
\label{eq_BS_critical_2}
\frac{\dd B}{\dd S} &= \frac{B - c_0^{-1}S + [B,S]_2}{S + [B,S]_2}, && \tif c = c_0.
\end{alignat}

By \eqref{eq_BSs} and the fact that $s(S)$ is strictly concave, $B$ is a strictly concave function of $S$ as well, hence strictly monotone on $(0,\ep_5)$. We claim that $B(S)^2=o(S)$ as $S\to 0$. For $d>1$, one checks by \eqref{eq_S} that $S(s) \sim s$, as $s\to0$, whence $B(S) = o(S)$, as $S\to 0$, by \eqref{eq_BSs}. If $d=1$, then $b'(0) = -c_0$ by Lemma \ref{lem_expectation_Z} and $b''(0) = +\infty$ by Lemma \ref{lem_1_is_singularity}. Equation \eqref{eq_S} then implies that $S(s)/s^2 \to +\infty$ as $s\to 0$, whence $s(S) = o(\sqrt S)$. The claim now follows by \eqref{eq_BSs}.

Proposition \ref{prop_reduction} now tells us that there exists a function $h(z) = [z]_2$, such that the function $\sfr(S) = S-h(B(S))$ has an inverse $S(\sfr)$ on $(0,\ep_6)$ and $\bfr(\sfr) = B(S(\sfr))$ satisfies the Briot--Bouquet equation
\begin{equation}
\label{eq_bfr_sfr}
\sfr \bfr' = \begin{cases}
d \bfr + [\bfr,\sfr]_2, &\tif d > 1,\\
\bfr - c_0^{-1} \sfr + [\bfr,\sfr]_2, &\tif d=1,
\end{cases}
\end{equation}
on $(0,\ep_6)$. By Fact \ref{fact_bb_general} and Remark \ref{rem_bb_general}, there exists then a function $\psi(z,u) = u+rz+[z,u]_2$, $r\in\C$, such that
$\bfr(\sfr) = \psi(\sfr,u(\sfr))$, where 
\[u(z) = Cz^d, \tif d\notin \N\quad\tand\quad u(z) = Cz^d\log z, \tif d\in\N,\] for some constant $C = C(c,f) \in\C$ (the form of $u$ in the case $d\in\N$ can be obtained from the one in Fact \ref{fact_bb_general} by changing $\psi$, $C$ and $K$). Moreover, comparing the coefficient of $\sfr$ on both sides of \eqref{eq_bfr_sfr}, we get, if $d>1$, $r = dr$, whence $r=0$ and if $d = 1$: $r + C = r - c_0^{-1}$, whence $C = -c_0^{-1}$.

Assume now $d>1$. Then $\bfr = u(\sfr) + [\sfr,u(\sfr)]_2$. Recall that $B = \bfr$ and $S = \sfr + h(\bfr)$. By \eqref{eq_BSs},
\[s = B+S = \bfr + \sfr + h(\bfr) = \sfr + u(\sfr) + [\sfr,u(\sfr)]_2,\] such that $s'(\sfr) = 1 + o(1)$ and $s(\sfr) = \sfr + [\sfr]_2 + o(\sfr^\gamma)$, as $\sfr\to0$, where $\gamma=(d+\lfloor d \rfloor)/2$, if $d\notin \N$ and $\gamma = d-1/2$, if $d\in\N$. By Lemmas \ref{lem_inversion} and \ref{lem_inversion_formula_d_ge_1}, for every $\varphi_0\in(0,\pi)$ there exists $r_0>0$, such that the inverse $\sfr(s)$ exists and is analytic on $H(\varphi_0,r_0)$ and satisfies
\[\sfr(s) = s + [s]_2 + o(s^\gamma),\quad\tas s\to0.\] This entails that 
\begin{alignat*}{2}
u(\sfr) &= C\sfr^d = C(s+o(s))^d = Cs^d + o(s^d),&&\quad\tif d\notin\N,\\
u(\sfr) &= C\sfr^d\log \sfr = C(s + o(s^{3/2}))^d \log(s+o(s)) = Cs^d\log s + o(s^d),&&\quad\tif d\in\N\backslash\{1\},\\
\sfr^n &= [s]_2 + o(s^{\gamma+1})+o(s^{\gamma^2}) = [s]_2 + o(s^d),&&\quad\text{ for all $n\ge2$}.
\end{alignat*}
It follows that
\[\bfr(s) = \bfr(\sfr(s)) = u(s) + [s]_2 + o(s^d),\quad\tas s\to0.\]We finally get by \eqref{eq_bs_BS}, \[b = -\lcb B - \lc S = -\lc s + (\lc - \lcb)\bfr = -\lc s + (\lc-\lcb) u(s) + [s]_2 + o(s^d),\] which proves \eqref{eq_asymptotics_a_not_N} and \eqref{eq_asymptotics_a_N}.

If $d=1$, recall that $u(z) = c_0^{-1} z\log \frac 1 z$ and $\bfr = u(\sfr) + r\sfr + [\sfr,u(\sfr)]_2$ for some $r\in\C$. By \eqref{eq_BSs}, \[s = B+S = \bfr + \sfr + h(\bfr) = u(\sfr) + (r+1)\sfr + [\sfr,u(\sfr)]_2,\] such that $s'(\sfr) = c_0^{-1}\log(\tfrac 1 \sfr) + O(1)$ and $s(\sfr) = c_0^{-1} \sfr \log \tfrac 1 \sfr + (r+1)\sfr + o(\sfr)$. Lemma \ref{lem_inversion} now implies that for every $\varphi_0\in(0,\pi)$ there exists $r_0>0$, such that the inverse $\sfr(s)$ exists and is analytic on $H(\varphi_0,r_0)$. Now, by \eqref{eq_bs_BS}, \[b = -c_0 s + S = -c_0 s + \sfr + h(\bfr) = -c_0 s + \sfr + O(\sfr^{3/2}).\] Lemma \ref{lem_inversion_formula_d_eq_1} now yields \eqref{eq_asymptotics_a_1}.
\end{proof}

\begin{remark}
The reason why we cannot explicitly determine the constant $K$ in Theorem \ref{th_asymptotics} is that we are analysing \eqref{eq_a} only {\em locally} around the point 1. Since the solution of \eqref{eq_a} with boundary conditions $a(q) = a(1) = 0$ is unique (this follows from the uniqueness of the travelling wave solutions to the FKPP equation), a {\em global} analysis of this equation should be able to exhibit the value of $K$. But it is probably easier to refine the probabilistic arguments of Section \ref{sec_probability}, which already give a lower bound that can be easily made explicit.
\end{remark}

The asymptotics established in Theorem \ref{th_asymptotics} for the infinitesimal generating function can now be readily transferred to the generating functions $F_x(s)$.

\begin{corollary}
\label{cor_asymptotics_Ft}
Under the assumptions of Theorem \ref{th_density}, for every $x>0$ and $\varphi \in (0,\pi)$ there exists $r>0$, such that $F_x(s) = E[s^{Z_x}]$ can be analytically extended to $G(\varphi,r)$. Furthermore, the following holds as $1-s\to1$ in $G(\varphi,r)$.
\begin{itemize}[label=$-$]
\item If $d=1$, then
\begin{equation}
\label{eq_asymptotics_Ft_1}
F_x(1-s) = 1-\e^{c_0x} s + c_0 x \e^{c_0 x} \left(\frac{s}{\log \frac 1 s} - \frac{s \log\log\frac 1 s}{(\log\frac 1 s)^2}\right) + \Otilde\left(\frac{s}{(\log\frac 1 s)^2}\right).
\end{equation}
\item If $d > 1$, then there is a polynomial $P_x(s)=\sum_{n=2}^{\lfloor d \rfloor} c_n s^n$, such that 
\begin{align}
\label{eq_asymptotics_Ft_not_N}
&\tif d \notin \N:\quad F_x(1-s) = 1-\e^{\lc x} s + P_x(s) + K_x d s^d + o(s^d),\\
\label{eq_asymptotics_Ft_N}
&\tif d\in\N:\quad F_x(1-s) = 1-\e^{\lc x} s + P_x(s) + K_x s^d \log s + o(s^d),
\end{align}
where $K_x = K(\e^{\lcb x} - \e^{\lc x})/(\lcb - \lc)$, with $K$ being the constant from Theorem \ref{th_asymptotics}.
\end{itemize}
\end{corollary}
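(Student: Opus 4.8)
The plan is to transfer the expansions of Theorem \ref{th_asymptotics} to $F_x$ through the functional relation between $a$ and $F_x$ given by Proposition \ref{prop_exp_Ft_a}. First I would produce the analytic extension of $F_x$ by invoking Corollary \ref{cor_analyticity_Ft} with $\lambda = a'(1) = \lc$, which is positive (and equals $c_0$ when $d=1$) by Proposition \ref{prop_FKPP}. Its hypotheses hold: Theorem \ref{th_asymptotics} already supplies the analytic extension of $a$ to $G(\varphi_0,r_0)$ for any $\varphi_0\in(0,\pi)$, and after shrinking $r_0$ so that the simple zero at $1$ is the only zero of $a$ there, $a$ is zero-free on $G(\varphi_0,r_0)$; moreover the required form $a(1-s) = -\lc s + \lc c\,s/\log s + O(s/|\log s|^\gamma)$ follows from \eqref{eq_asymptotics_a_1} when $d=1$ (take $\lc c = -c_0$, i.e.\ $c=-1$, and any $\gamma\in(1,2)$, since $s^2$, $s\log\log\tfrac1s/(\log\tfrac1s)^2$ and the $\Otilde(s/(\log\tfrac1s)^2)$ term are all $O(s/|\log s|^\gamma)$), and from \eqref{eq_asymptotics_a_not_N}--\eqref{eq_asymptotics_a_N} when $d>1$ (take $c=0$ and any $\gamma>1$, since $s^n = o(s/|\log s|^\gamma)$ for integer $n\ge2$ and $s^d$, $s^d\log s = o(s/|\log s|^\gamma)$ for $d>1$). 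Choosing $\varphi_0<\varphi$, Corollary \ref{cor_analyticity_Ft} then yields, for suitable $r>0$, an analytic extension of $F_x$ to $G(\varphi,r)$ mapping $G(\varphi,r)$ into $G(\varphi_0,r_0)$.

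Next, with $G = G(\varphi_0,r_0)$ and $D = G(\varphi,r)$ the hypotheses of Proposition \ref{prop_exp_Ft_a} are met ($D$ is simply connected, $F_x(D)\subset G$, $D\cap(0,1)\ne\empt$, and $G$ avoids the zeros of $a$), so \eqref{eq_exp_Ft_a} holds on $G(\varphi,r)$ with $\lambda=\lc$. Writing $\sigma = 1-s$, $G_x(\sigma) = 1-F_x(1-\sigma)$, $b = a(1-\cdot)$ and substituting $r = 1-\rho$, equation \eqref{eq_exp_Ft_a} becomes the implicit relation
\[
G_x(\sigma) = \e^{\lc x}\,\sigma\,\exp\Bigl(-\!\int_{G_x(\sigma)}^{\sigma} f^*(1-\rho)\,\dd\rho\Bigr),\qquad f^*(1-\rho) = \frac{\lc}{b(\rho)}+\frac1\rho,
\]
to be solved for $G_x(\sigma)$ as $\sigma\to0$ in $H(\pi-\varphi,r)$; equivalently, setting $\Phi(s) = (1-s)\exp\int^s f^*$ one has $\Phi(F_x(s)) = \e^{\lc x}\Phi(s)$, so $F_x = \Phi^{-1}\!\circ(\e^{\lc x}\Phi)$.

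Finally I would extract the expansion. Since the integral tends to $0$, $G_x(\sigma) = \e^{\lc x}\sigma + o(\sigma)$. Feeding the expansion of $a(1-\rho)$ from Theorem \ref{th_asymptotics} into $f^*(1-\rho) = \lc/b(\rho)+1/\rho$ and expanding $1/b(\rho)$ as a geometric series, one finds that near $\rho=0$ the function $f^*(1-\rho)$ is bounded with leading non-analytic term $-(K/\lc)\rho^{d-2}$ if $d\notin\N$, $-(K/\lc)\rho^{d-2}\log\rho$ if $d\in\N\setminus\{1\}$, and $f^*(1-\rho)\sim -1/(\rho\log\tfrac1\rho)$ if $d=1$. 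Integrating from $G_x(\sigma)=\e^{\lc x}\sigma+o(\sigma)$ to $\sigma$ and exponentiating, a single substitution already yields the result: for $d>1$ the non-analytic part of the integral is $\tfrac{K(\e^{(\lcb-\lc)x}-1)}{\lc(d-1)}\,\sigma^{d-1}$ (times $\log\sigma$ when $d\in\N$), which, using $d\lc=\lcb$ and $\lc(d-1)=\lcb-\lc$, produces for $F_x(1-s)$ the stated singular term of order $s^d$ (resp.\ $s^d\log s$) with coefficient $K_x = K(\e^{\lcb x}-\e^{\lc x})/(\lcb-\lc)$, the analytic terms of order $s^2,\dots,s^{\lfloor d\rfloor}$ being gathered into $P_x$; for $d=1$ one has $\int_{G_x(\sigma)}^{\sigma}f^*(1-\rho)\,\dd\rho = \log\log\tfrac1\sigma - \log\log\tfrac1{G_x(\sigma)} + \cdots = c_0x/\log\tfrac1\sigma + \cdots$, giving the first correction $c_0x\e^{c_0x}s/\log\tfrac1s$, while the second correction $-c_0x\e^{c_0x}s\log\log\tfrac1s/(\log\tfrac1s)^2$ and the error $\Otilde(s/(\log\tfrac1s)^2)$ come from one further round of substitution in which the $\log\log$ term of \eqref{eq_asymptotics_a_1} is carried along.

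The genuine work is this last step, and it is careful bookkeeping rather than a conceptual difficulty: one must verify that the sectorial $o(\cdot)$, $O(\cdot)$, $\Otilde(\cdot)$ estimates of Section \ref{sec_notation} survive substitution of $G_x(\sigma)$---itself known only up to a sectorial error---into the maps $\rho\mapsto\rho^{d-1}$, $\log\rho$, $\log\log\tfrac1\rho$, and one must, in the integer and $d=1$ cases, separate cleanly the genuinely singular monomial from the analytic corrections absorbed into $P_x$. The value of $K_x$, by contrast, is a one-line consequence of the identities $d\lc=\lcb$, $\lc(d-1)=\lcb-\lc$ once the leading non-analytic term of $f^*$ is identified; this computation mirrors, and slightly extends, the elementary arguments behind \eqref{eq_aprimeprime}.
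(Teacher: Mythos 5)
Your proposal follows essentially the same route as the paper's proof: invoke Corollary~\ref{cor_analyticity_Ft} (with $\lambda=\lc$, after checking via Theorem~\ref{th_asymptotics} that $a(1-s)=-\lc s+\lc c\, s/\log s+O(s/|\log s|^\gamma)$ with $c=-1$, $\gamma\in(1,2)$ when $d=1$ and $c=0$ when $d>1$) to get the analytic extension of $F_x$ to $G(\varphi,r)$ mapping into a sector where $a\ne0$; then apply Proposition~\ref{prop_exp_Ft_a} to obtain $1-F_x(1-s)=\e^{\lc x}s\exp(I(s))$ and transfer the singular behaviour of $a$ through $f^*$ by integration and iteration. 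The ``careful bookkeeping'' you defer --- re-substituting $w(s)=\e^{\lc x}s+o(s)$ into the integral, gathering the analytic monomials into $P_x$, and for $d=1$ performing one further round with the $\log\log$ term to pin down the $\Otilde$ error --- is exactly what fills the paper's displays \eqref{eq_w}--\eqref{eq_int_sing} and the passage through \eqref{eq_w_first}, so your argument is the paper's argument modulo those computations.
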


\begin{proof}
Let $0 < \varphi_0 < \varphi$. By Theorem \ref{th_asymptotics}, there exists $r_0>0$, such that $a(s)$ can be analytically extended to $G(\varphi_0,r_0)$ and satisfies the hypothesis of Corollary \ref{cor_analyticity_Ft}. It follows that there exists $r > 0$, such that $F_x(s)$ can be analytically extended to $G(\varphi,r)$ and maps $G(\varphi,r)$ into $G(\varphi_0,r_0)$. Hence, the functions
\[w(s) = 1-F_x(1-s)\quad\tand\quad I(s) = \int_s^{w(s)} f^*(1-r)\,\dd r,\]
where $f^*(s)$ is defined as in \eqref{eq_fstar_def}, are analytic in $H(\pi-\varphi,r)$. In what follows, we always assume that $s\in H(\pi-\varphi,r)$. Appearance of the symbols $\sim,O,\Otilde,o$ means that we let $s$ go to $0$ in $H(\pi-\varphi,r)$.

First of all, we note that by Proposition \ref{prop_exp_Ft_a}, we have
\begin{equation}
\label{eq_w}
w(s) = s\e^{\lc x}\exp(I(s)) = s\e^{\lc x}\Big(1 + I(s) + \sum_{k=2}^\infty \frac{I(s)^k}{k!}\Big).
\end{equation}
Now assume $d > 1$. By Theorem \ref{th_asymptotics}, $a(1-s) = -\lc s + [s]_2 + u(s) + o(s^d)$, where $u(s) = K s^d$ or $u(s) = K s^d \log s$, according to whether $d\notin \N$ or $d\in\N$, respectively. It follows that
\[
\begin{split}
f^*(1-s) &= \frac{\lc}{a(1-s)} + \frac{1}{s} = -\frac{1}{s}\left(1-[s]_1-\tfrac{u(s)}{\lc s} + o(s^{d-1})\right)^{-1} + \frac 1 s\\
&= [s]_0 - \tfrac{u(s)}{\lc s^2} + o(s^{d-2}).
\end{split}
\]
Now, $\int_s^{w(s)} o(r^{d-2})\,\dd r = o(s^{d-1})$, since $w(s) \sim s\e^{\lc x}$ by Lemma \ref{lem_expectation_Z}. Thus,
\begin{equation}
\label{eq_I}
I(s) = \int_s^{w(s)} f^*(1-r)\,\dd r = [w(s),s]_1 - \int_s^{w(s)}\frac{u(r)}{\lc r^2} \,\dd r + o(s^{d-1}).
\end{equation}
Since $\int_s^{w(s)}r^{-2} u(r) \,\dd r = O(s^{d-1}\log s),$ equations \eqref{eq_w} and \eqref{eq_I} now give
\begin{equation}
\label{eq_w_dg1}
w(s) = s\e^{\lc x}\left(1 + [w(s),s]_1 - \int_s^{w(s)}\frac{u(r)}{\lc r^2} \,\dd r + o(s^{d-1})\right).
\end{equation}
If $d\ge2$, we deduce that $w(s) = s\e^{\lc x} + o(s^{3/2})$. Straightforward calculus now shows that
\begin{equation}
\label{eq_int_sing}
\int_s^{w(s)}\frac{u(r)}{\lc r^2} \,\dd r = \frac{K_x}{\e^{\lc x}}\frac{u(s)}{s} + [w(s),s]_1 + o(s^{d-1}),
\end{equation}
and \eqref{eq_w_dg1} and \eqref{eq_int_sing} now yield
\[w(s) = s\e^{\lc x} + [w(s),s]_2 - K_x u(s) + o(s^d).\]
Repeated application of this equation shows that $w(s) = s\e^{\lc x} + [s]_2 - K_x u(s) + o(s^d)$, which yields \eqref{eq_asymptotics_Ft_not_N} and \eqref{eq_asymptotics_Ft_N}.

In the critical case $d=1$, Theorem \ref{th_asymptotics} tells us that
\begin{equation}
\label{eq_fstar_asymptotics_d1}
f^*(1-s) = \frac{1}{s}\left(-\frac{1}{\log\frac 1 s} + \frac{\log\log\frac 1 s}{(\log\frac 1 s)^2} + \Otilde\left(\frac 1 {(\log\frac 1 s)^2}\right)\right).
\end{equation}
Write $\lambda = \lc = c_0$. For our first approximation of $w(s)$, we note that
\[
I(s) \sim - \int_s^{s\e^{\lambda x}} \frac 1 {r\log \frac 1 r} \,\dd r \sim - \frac{1}{\log \frac 1 s}\int_s^{s\e^{\lambda x}} \frac 1 r\,\dd r = -\frac{\lambda x}{\log \frac 1 s},
\]hence, by \eqref{eq_w},
\begin{equation}
\label{eq_w_first}
w(s) = s\e^{\lambda x}\left(1-\frac{\lambda x}{\log \frac 1 s} + o\left(\frac 1 {\log \frac 1 s}\right)\right).
\end{equation}

To obtain a finer approximation, we decompose $I(s)$ into
\[I(s) = \int_s^{s\e^{\lambda x}} f^*(1-r)\,\dd r + \int_{s\e^{\lambda x}}^{w(s)} f^*(1-r)\,\dd r =: I_1(s) + I_2(s).\]
We then have
\[
I_1(s) = -\frac{\lambda x}{\log \frac 1 s} + \lambda x\frac{\log\log \frac 1 s}{(\log \frac 1 s)^2} + \Otilde\left(\frac 1 {(\log \frac 1 s)^2}\right),\]
and, because of \eqref{eq_w_first},
\[
-I_2(s) \sim \int_{s\e^{\lambda x}}^{s\e^{\lambda x}(1+\frac{\lambda x}{\log s})} \frac 1 {r\log \frac 1 r} \,\dd r \sim \frac{\lambda x}{(\log\frac 1 s)^2}.
\]
Plugging this back into \eqref{eq_w} finishes the proof.
\end{proof}

\begin{proof}[Proof of Theorem \ref{th_density}]
Let $x>0$. We want to apply the methods from singularity analysis reviewed in Section \ref{sec_singanalysis} to the functions $a$ and $F_x$, if $\delta = 1$, or the functions $h$ and $h_x$ from Lemma \ref{lem_h}, if $\delta \ge 2$. Let $\varphi\in(0,\pi/2)$. By Theorem \ref{th_asymptotics} and Corollary \ref{cor_asymptotics_Ft}, there exists $r_0>0$, such that $a$ and $F_x$ can be analytically extended to $G(\varphi,r_0)$, which implies that for some $\varphi_1\in(\varphi,\pi/2)$ and $r_1\in(0,r)$, $h$ and $h_x$ can be extended to $G(\varphi_1,r_1)$, as well. Moreover, by Lemma \ref{lem_h}, each of these functions is analytic in a neighbourhood of every point of $C = \{s\in\partial\D: |1-s| \ge r_1/2\}$, which is a compact set. Hence, there exists a finite number of neighbourhoods which cover $C$. It is then easy to show that there exists $r>0$, such that the functions are analytic in $\Delta(\varphi_1,r)$.

If $\delta = 1$, we can then immediately apply Facts \ref{th_singanalysis_explicit} and \ref{th_singanalysis_o}, together with the asymptotics on $a$ and $F_x$ established in Theorem \ref{th_asymptotics} and Corollary \ref{cor_asymptotics_Ft}, to prove Theorem \ref{th_density}.

If $\delta \ge 2$, let $q(s)$ be the inverse of $s\mapsto s^\delta$ in a neighbourhood of 1, then $h(s) = a(q(s))/q(s)$ near 1, by Lemma \ref{lem_h}. But since $q'(1) = 1/\delta$, we have
\[
h(1-s) = a(1-(\frac 1 \delta s + c_2s^2 + c_3s^3 + \cdots))(1+c_1's+c_2's+\cdots),
\]
for some constants $c_n,c_n'$, and so equations \eqref{eq_asymptotics_a_1}, \eqref{eq_asymptotics_a_not_N} and \eqref{eq_asymptotics_a_N} transfer to $h$ with the coefficient of the main singular term divided by $\delta^d$. We can therefore use Facts \ref{th_singanalysis_explicit} and \ref{th_singanalysis_o} for the function $h$ to obtain the asymptotic for $(p_{\delta n+1})_{n\in\N}$ in Theorem \ref{th_density}. In the same way, equations \eqref{eq_asymptotics_Ft_1}, \eqref{eq_asymptotics_Ft_not_N} and \eqref{eq_asymptotics_Ft_N} yield asymptotics for $h_x$, such that we can use again Facts \ref{th_singanalysis_explicit} and \ref{th_singanalysis_o} to prove the second part of Theorem \ref{th_density}.
\end{proof}

\appendix
\section{Appendix}

\subsection{A renewal argument for branching diffusions}
Let $W =(W_t)_{t\ge0}$ be a diffusion on an interval with endpoints $a'\le 0 < a$, such that $\lim_{x\downarrow 0} \P^x[T_0 < t] = 1$ for every $t>0$, where $T_0 = \inf\{t \ge 0: W_t = 0\}$ and $W_0 = x$, $\P^x$-almost surely. For $x\in (0,a)$, and only in the scope of this section, we define $P^x$ to be the law of the branching diffusion starting with a single particle at position $x$ where the particles move according to the diffusion $W$ and branch with rate $\beta$ according to the reproduction law with generating function $f(s)$. Moreover, particles hitting the point $0$ are absorbed at that point. Denote by $Z$ the number of particles absorbed during the lifetime of the process and define $u_s(x) = P^x[s^Z]$ for $s\in[0,1)$ and $x\in (0,a)$.

\begin{lemma}
\label{lem_difeq_branching_diffusion}
Let $s\in[0,1)$ and $\G$ be the generator of the diffusion $W$. Then
\[\G u_s = \beta(u_s-f\circ u_s)\quad\ton (0,a), \quad\text{with }u_s(0+) = s.\]
\end{lemma}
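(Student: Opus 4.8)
The plan is to carry out a first-step (renewal) decomposition on the initial particle, rewrite the resulting identity as a Feynman--Kac formula, and then invoke the standard probabilistic solution theory for one-dimensional diffusions to recognise $u_s$ as the announced boundary value problem.

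Fix $s\in[0,1)$ and $x\in(0,a)$. Under $P^x$ the initial particle moves as $W$ started at $x$ for an exponential lifetime $\zeta$ of parameter $\beta$, independent of its motion; write $T_0$ for the first time this motion hits $0$. Conditioning on the trajectory of the initial particle up to time $\zeta$, there are two cases. On $\{T_0\le\zeta\}$ the initial particle is absorbed at $0$ before it ever branches, so exactly one particle is ever absorbed and $s^Z=s$. On $\{\zeta<T_0\}$ the initial particle sits at the interior point $W_\zeta\in(0,a)$ when it splits into $L$ offspring (with $E[s^L]=f(s)$), and the branching property together with the strong Markov property of $W$ shows that each offspring starts afresh an independent branching diffusion from $W_\zeta$, so that the conditional expectation of $s^Z$ is $f(u_s(W_\zeta))$. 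Since $0\le s^Z\le1$, taking expectations yields the renewal identity
\begin{equation*}
u_s(x)=P^x\!\left[\,s\,\Ind_{\{T_0\le\zeta\}}+f\big(u_s(W_\zeta)\big)\,\Ind_{\{\zeta<T_0\}}\,\right],
\end{equation*}
and integrating out $\zeta$, using its independence of the motion and $\int_\tau^\infty\beta\e^{-\beta t}\,\dd t=\e^{-\beta\tau}$, the Feynman--Kac representation
\begin{equation*}
u_s(x)=P^x\!\left[\,s\,\e^{-\beta T_0}+\int_0^{T_0}\beta\,\e^{-\beta t}\,f\big(u_s(W_t)\big)\,\dd t\,\right],\qquad x\in(0,a).
\end{equation*}

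From these identities I would extract both statements of the lemma. For the boundary condition, note that on $\{\zeta<T_0\}$ the first displayed integrand differs from $s$ by at most $1$, so $|u_s(x)-s|\le P^x(\zeta<T_0)=\int_0^\infty\beta\e^{-\beta t}\,\P^x(T_0>t)\,\dd t$ (the initial particle's motion being a copy of $W$); by the hypothesis $\lim_{x\downarrow0}\P^x[T_0<t]=1$ and dominated convergence this tends to $0$ as $x\downarrow0$, giving $u_s(0+)=s$. For the differential equation, set $g:=\beta\,(f\circ u_s)$, a bounded function; the Feynman--Kac representation combined with the Markov property of $W$ shows that
\[
M_t=\e^{-\beta(t\wedge T_0)}\,u_s\big(W_{t\wedge T_0}\big)+\int_0^{t\wedge T_0}\beta\,\e^{-\beta r}\,f\big(u_s(W_r)\big)\,\dd r
\]
is a bounded $\P^x$-martingale for every $x\in(0,a)$. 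This is precisely the assertion that $u_s$ is the bounded solution on $(0,a)$ of $\G v-\beta v=-g$ with boundary value $s$ at $0$; by the standard resolvent / Feynman--Kac correspondence for regular one-dimensional diffusions, $u_s$ then lies in the domain of $\G$ and satisfies $\G u_s-\beta u_s=-g$ on $(0,a)$, i.e.\ $\G u_s=\beta(u_s-f\circ u_s)$.

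The measurability and integrability bookkeeping is trivial because $0\le s^Z\le1$, and the continuity and $C^2$-regularity of $u_s$ on $(0,a)$ needed to turn the martingale identity into a classical solution of the equation follow from standard one-dimensional diffusion theory (resolvent estimates, or regularity of Feynman--Kac solutions; continuity can also be read off from the monotone Picard iteration attached to the renewal identity). That last point --- legitimately upgrading the martingale / Feynman--Kac identity to the stated equation for the generator $\G$ --- is the only genuinely delicate step, and I would deal with it by quoting this theory rather than reproving it.
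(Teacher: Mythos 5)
Your first-step decomposition is identical to the paper's, and the Feynman--Kac identity you obtain by integrating out the exponential lifetime,
\[
u_s(x)=\E^x\!\left[s\,\e^{-\beta T_0}+\int_0^{T_0}\beta\,\e^{-\beta t}\,f\bigl(u_s(W_t)\bigr)\,\dd t\right],
\]
is precisely the paper's representation $u_s=C_{s,\beta}v+\beta R_\beta(f\circ u_s)$ written out as a kernel, with $v(x)=\E^x[\e^{-\beta T_0}]$ and $R_\beta$ the $\beta$-resolvent. Where you diverge is in how this identity is turned into the ODE. The paper keeps the resolvent form explicit: since $\G v=\beta v$ (a tabulated fact) and $\G R_\beta g=\beta R_\beta g-g$, applying $\G$ termwise yields $\G u_s=\beta(u_s-f\circ u_s)$ immediately, and the representation itself guarantees that $u_s$ is in the domain of $\G$. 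You instead recast the identity as the martingale property of $M_t$ and then appeal to the general correspondence between such martingale problems and classical solutions of $\G v-\beta v=-g$. That route is legitimate and standard, but the "delicate step" you yourself flag --- upgrading the martingale identity to a generator identity, which requires knowing $u_s$ lies in the domain of $\G$ --- is exactly the point the paper's two-line resolvent manipulation resolves for free. So your plan is correct and close in spirit; the trade-off is that you outsource the one non-routine regularity point to black-box theory, while the paper dispatches it concretely via $v$ and $R_\beta$.
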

\begin{proof}
The proof proceeds by a renewal argument similar to the one in \cite{McK1975}. As for the BBM, for an individual $u$, we denote by $\zeta_u$ its time of death, $X_u(t)$ its position at time $t$ and $L_u$ the number of $u$'s children. Define the event $A = \{\exists t\in [0,\zeta_\empt): X_\empt(t) = 0\}$. For $s\in[0,1)$ we have by the strong branching property
\[
\begin{split}
u_s(x) = E^x[s^Z] &= s P^x(A) + E^x\left[\left(E^{X_\empt(\zeta_\empt-)}[s^Z]\right)^{L_\empt}, A^c \right]\\
&= s \P^x(T_0 < \xi) + \E^x[f(u_s(W_\xi)), \xi \le T_0],
\end{split}
\] where $W=(W_t)_{t\ge0}$ is a diffusion with generator $\G$ starting at $x$ under $\P^x$, $T_0 = \inf\{t\ge0: W_t = 0\}$ and the random variable $\xi$ is exponentially distributed with rate $\beta$ and independent from $W$. Setting $v(x) = \P^x(T_0 < \xi)$ we get by integration by parts
\[v(x) = \int_0^\infty \beta \e^{-\beta t} \P^x(T_0 < t) \,\dd t = \int_0^\infty \e^{-\beta t} \P^x(T_0 \in \dd t) = \E^x\left[\e^{-\beta T_0}\right],\]
and therefore $\G v = \beta v$ on $(0,a)$ (\cite{BS2002}, Paragraph II.1.10, p.\ 18).

Denote the $\beta$-resolvent of the diffusion by $R_\beta$. By the strong Markov property,
\[\begin{split}
\E^x[f(u_s(W_\xi)), \xi \le T_0] &= \E^x\Big[\int_0^\infty \beta \e^{-\beta t} f(u_s(W_t)) \,\dd t\Big] - \E^x\Big[\int_{T_0}^\infty \beta \e^{-\beta t} f(u_s(W_t)) \,\dd t\Big]\\
 &= \beta R_\beta(f\circ u_s)(x) - \beta \E^x[\e^{-\beta T_0}]R_\beta(f\circ u_s)(0),
 \end{split}\]
hence $u_s = C_{s,\beta}v + \beta R_\beta(f\circ u_s)$, with $C_{s,\beta} = s - \beta R_\beta(f\circ u_s)(0)$. It follows that
\[\G u_s = \beta C_{s,\beta}v + \beta^2 R_\beta(f\circ u_s) - \beta(f\circ u_s) = \beta(u_s - f\circ u_s)\quad\ton (0,a).\]
By the above hypothesis on $W$, $\P^x(T_0<\xi) \to 1$ as $x\downarrow 0$, whence $u_s(0+) = s$.
\end{proof}

\subsection{Addendum to the proof of Theorem \ref{th_tail}}
\label{sec_appendix_tail}
With the notation used in the proof of Theorem \ref{th_tail}, recall that for some constant $K>0$ we have
\[
 (\phi' + c_0 \phi)(x) \sim \phi(x)/x \sim K \e^{-c_0 x},\quad \text{ as }x\to\infty.
\]
In what follows, formulae containing the symbols $\sim$ and $o()$ are meant to hold as $s\downarrow 0$. The above equation yields
\begin{equation}
\label{eq_a_asymptotic}
a(1-s) = \phi'(\phi^{-1}(s)) = -c_0s + (\phi' + c_0\phi)(\phi^{-1}(s)) = -c_0s + \frac{(c_0+o(1))s}{\log \frac 1 s}.
\end{equation}
Now, by \eqref{eq_a}, we have
\[
 a'(1-s)a(1-s) = 2c_0a(1-s) + c_0^2s - g(s),
\]
where we recall that $g(s)$ was defined as $g(s) = 2(f(1-s) - 1 + f'(1)s)$. From the above equation, one gets
\[
 a''(1-s) = -(a(1-s))^{-3}\Big(\big(c_0a(1-s) + c_0^2s-g(s)\big)^2 - g'(s) a(1-s)^2\Big),
\]
and an application of Lemma \ref{lem_XlogX} and \eqref{eq_a_asymptotic} yields \eqref{eq_aprimeprime}.

Kolmogorov's forward and backward equations \eqref{eq_fwd} and \eqref{eq_bwd} give
\[F_x'(s) = \frac{a(F_x(s))}{a(s)},\]
and taking the derivative on both sides of this equation gives
\begin{equation}
\label{eq_Fxpp_asymp}
F_x''(s) = \frac{a(F_x(s))}{a(s)^2}\left(a'(F_x(s))-a'(s)\right).
\end{equation}
By \eqref{eq_aprimeprime} and $F_x'(1) = E[Z_x] = \e^{c_0x}$, we get
\[
a'(F_x(1-s))-a'(1-s) = -\int_s^{1-F_x(1-s)} a''(1-r)\,\dd r \sim -\frac{c_0^2x}{(\log s)^2}.
\]
This equation, together with \eqref{eq_Fxpp_asymp} now yields
\[F_x''(1-s) \sim \frac{-c_0\e^{c_0x}s}{c_0^2s^2} \left(-\frac{c_0^2x}{(\log s)^2}\right),\] which is \eqref{eq_Fxprimeprime}.

\subsection{Reduction to Briot--Bouquet equations}
\def\wfrak{\mathfrak{w}}
\def\zfrak{\mathfrak{z}}
In this section, we show how one can reduce differential equations as those obtained in the proof of Theorem \ref{th_density} to the canonical form \eqref{eq_briot_bouquet}. It is mostly based on pp.\;64 and 65 of \cite{Bie1965}.

\begin{lemma}
\label{lem_holo_solution}
Let $\lambda \in (0,1]$ and $p\in\C$. Then the equation
\begin{equation}
\label{eq_w_holo_solution}
w' = \frac{\lambda w + [w,z]_2}{z + pw + [w,z]_2}.
\end{equation}
has an analytic solution $w(z) = [z]_2$ in a neighbourhood of the origin.
\end{lemma}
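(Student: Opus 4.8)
The plan is to seek the solution as a formal power series $w(z)=\sum_{n\ge2}a_nz^n$, show that the recursion for the coefficients $a_n$ determines them uniquely, and then invoke Fact \ref{th_cde} (or rather an appropriate variant) to guarantee that this formal series actually converges and is analytic near $0$. First I would rewrite \eqref{eq_w_holo_solution} in the cleared form
\begin{equation}
\label{eq_cleared}
\bigl(z + pw + [w,z]_2\bigr)\,w' = \lambda w + [w,z]_2,
\end{equation}
which makes sense as an identity of formal power series once we substitute $w=[z]_2$. Plugging in $w=\sum_{n\ge2}a_nz^n$, the left-hand side has lowest-order term $z\cdot w' = \sum_{n\ge2}na_nz^n$ (the contributions of $pw\cdot w'$ and of the $[w,z]_2$ factor times $w'$ start at order $\ge3$ and $\ge4$ respectively), while the right-hand side has lowest-order term $\lambda\sum_{n\ge2}a_nz^n$. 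Matching the coefficient of $z^n$ gives
\[
n a_n = \lambda a_n + (\text{polynomial in }a_2,\dots,a_{n-1}\text{ and the coefficients of the }[\,\cdot\,]_2\text{ terms}),
\]
so that $(n-\lambda)a_n$ is a universal polynomial in the previously determined coefficients. Since $\lambda\in(0,1]$, we have $n-\lambda\ge 2-\lambda>0$ for every $n\ge2$, hence $a_n$ is uniquely determined for all $n$: the formal solution exists and is unique.

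It remains to establish convergence. The cleanest route is to observe that \eqref{eq_w_holo_solution} is, after the substitution $w=zv$, equivalent to a differential equation to which the Briot--Bouquet existence theorem (Fact \ref{fact_bb_holo}) applies. Writing $w=zv$, so $w'=v+zv'$, and substituting into \eqref{eq_cleared}, the factor of $z$ that appears in every term of the numerator and denominator (note $z+pw+[w,z]_2 = z(1+pv+[v,z]_1\cdot z\text{-type terms})$ and $\lambda w+[w,z]_2 = z(\lambda v+[v,z]_1)$, both divisible by $z$, while also $w'=v+zv'$) lets one cancel one power of $z$ and arrive at an equation of the shape
\[
z v' = (\lambda-1) v + [v,z]_2 \quad\text{up to an invertible analytic prefactor near }(0,0),
\]
i.e. a Briot--Bouquet equation \eqref{eq_briot_bouquet} with exponent $\lambda-1$. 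Since $\lambda\in(0,1]$, the exponent $\lambda-1\in(-1,0]$ is not a positive integer, so Fact \ref{fact_bb_holo} yields a unique analytic solution $v(z)$ with $v(0)=0$; then $w(z)=zv(z)=[z]_2$ is the desired analytic solution of \eqref{eq_w_holo_solution}.

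I expect the main obstacle to be purely bookkeeping: carefully checking that after the substitution $w=zv$ exactly one factor of $z$ divides both the numerator and the denominator of the right-hand side (so that the resulting equation has the Briot--Bouquet normal form with no spurious singularity and with an invertible analytic prefactor that can be absorbed), and that the prefactor is indeed nonvanishing at the origin so that the equation can be put in the form $zv' = (\lambda-1)v + [v,z]_2$. Alternatively, if one prefers to avoid the substitution, one can prove convergence directly by the classical majorant method: bound the universal polynomials governing the recursion by those of an explicitly solvable majorant equation, using $n-\lambda\ge 2-\lambda$ to control the denominators uniformly. Either way the analyticity follows; the uniqueness of $w(z)=[z]_2$ is already forced by the coefficient recursion above.
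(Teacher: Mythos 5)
Your proposal is correct and uses essentially the same approach as the paper: substitute $w = zv$ to reduce the equation to the Briot--Bouquet form $zv' = (\lambda-1)v + rz + [v,z]_2$ and then invoke Fact~\ref{fact_bb_holo}, using that $\lambda-1\in(-1,0]$ is not a positive integer. The ``invertible prefactor'' you flagged does disappear cleanly once one expands $(1+pv+z[v,z]_0)^{-1}$ as a power series and absorbs it into the $[v,z]_2$ remainder, so the bookkeeping goes through exactly as you anticipated.
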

\begin{proof}
We choose the {\em ansatz} $w = z\cdot w_1$. This transforms \eqref{eq_w_holo_solution} into
\[zw_1' + w_1 = \frac{\lambda z w_1+z^2[w_1,z]_0}{z+pzw_1+z^2[w_1,z]_0} = \frac{\lambda w_1+z[w_1,z]_0}{1+[w_1,z]_1}.\] Writing the inverse of the denominator as a power series in $w_1$ and $z$, this equals
\[(\lambda w_1+z[w_1,z]_0)(1+[w_1,z]_1) = \lambda w_1 + rz + [w_1,z]_2,\] for some $r\in\C$. This finally yields \[zw_1' = (\lambda-1) w_1 + rz + [w_1,z]_2.\]
Since $\lambda-1$ is not a positive integer, this equation now has an analytic solution $w_1(z) = [z]_1$ by Fact \ref{fact_bb_holo}, whence $w(z) = zw_1(z) = [z]_2$ solves \eqref{eq_w_holo_solution}.
\end{proof}
\begin{remark}
The important point in Lemma \ref{lem_holo_solution} is that the coefficient of $z$ in the numerator of \eqref{eq_w_holo_solution} is 0, which is why $w'(z) = 0$.
\end{remark}

\begin{proposition}
\label{prop_reduction}
Let $\lambda \ge 1$ and $p\in\R$. Suppose $w(z)$ is a strictly monotone real-valued function on $(0,\ep)$, $\ep>0$, with $w(z)^2 = o(z)$ as $z\to 0$ and satisfying
\begin{equation}
\label{eq_nonreduced}
w' = \frac{\lambda w + pz + [w,z]_2}{z + [w,z]_2}\quad \ton (0,\ep).
\end{equation}
Then there exists $h(z) = [z]_2$ and $\ep_1>0$, such that $\zfrak = z- h(w)$ has an inverse $z = z(\zfrak)$ on $(0,\ep_1)$ and such that
\begin{equation}
\label{eq_reduced}
\zfrak \frac{\dd w}{\dd\zfrak} = \lambda w + p\zfrak + [w,\zfrak]_2\quad\ton(0,\ep_1).
\end{equation}
\end{proposition}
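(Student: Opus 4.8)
The plan is to perform a change of variables $z \mapsto \zfrak = z - h(w)$ that kills the higher-order terms in the denominator of \eqref{eq_nonreduced} and reduces the equation to the canonical Briot--Bouquet form \eqref{eq_reduced}. The idea (following Bieberbach, pp.\ 64--65 of \cite{Bie1965}) is that we are free to choose the analytic function $h(z) = [z]_2$ so as to absorb the unwanted correction terms $[w,z]_2$ that sit next to $z$ in the denominator of the right-hand side of \eqref{eq_nonreduced}.

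First I would write \eqref{eq_nonreduced} as $(z + [w,z]_2)\,w' = \lambda w + p z + [w,z]_2$ and look for a substitution $\zfrak = z - h(w)$ with $h(z) = [z]_2$ to be determined. Since $\zfrak$ depends on $z$ both explicitly and through $w(z)$, we have $\frac{\dd\zfrak}{\dd z} = 1 - h'(w)w'$, so that $\frac{\dd w}{\dd\zfrak} = \frac{w'}{1 - h'(w)w'}$. Substituting $z = \zfrak + h(w)$ everywhere and using \eqref{eq_nonreduced} to express $w'$, one obtains after clearing denominators an equation of the form
\[
\zfrak\,\frac{\dd w}{\dd\zfrak} = \lambda w + p\zfrak + N(w,\zfrak),
\]
where $N(w,\zfrak)$ is a power series in $w$ and $\zfrak$ whose coefficients depend (analytically) on the coefficients of $h$. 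The task is then to choose the Taylor coefficients of $h$ recursively, order by order, so that $N(w,\zfrak) = [w,\zfrak]_2$, i.e.\ so that $N$ has no constant term, no term linear in $w$ other than $\lambda w$, and no term linear in $\zfrak$ other than $p\zfrak$. Because $h$ starts at order $2$, the lowest-order obstruction sits at order $2$ and at each subsequent order the new coefficient of $h$ enters linearly with a nonzero coefficient (this is where $\lambda \ge 1$, hence $\lambda \ne 0$, and more precisely the non-resonance of $\lambda$ with the relevant integer combinations, is used — exactly as in Fact \ref{fact_bb_holo} and Lemma \ref{lem_holo_solution}); so the recursion is solvable and produces a formal series $h$. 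A majorant/dominant-series argument, identical in spirit to the classical proof that \eqref{eq_briot_bouquet} has an analytic solution, shows that this formal $h$ converges in a neighbourhood of $0$, giving a genuine analytic $h(z) = [z]_2$.

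With such an $h$ in hand, I would then check that $\zfrak = z - h(w(z))$ is a legitimate change of variable on a one-sided interval $(0,\ep_1)$: since $w(z)^2 = o(z)$ and $h(w) = [w]_2 = O(w^2) = o(z)$, we get $\zfrak = z(1 + o(1))$ as $z \to 0$, and $\frac{\dd\zfrak}{\dd z} = 1 - h'(w)w' \to 1$ because $h'(w) = O(w) \to 0$ while $w'$ stays bounded (indeed $w' \to \lambda w/z \cdot(1+o(1))$ which is $o(1)$ when $\lambda > 1$, and for $\lambda = 1$ the ratio $w'/$ anything dangerous is still controlled by $w(z)^2 = o(z)$); hence $\zfrak$ is strictly increasing near $0$ and admits an analytic — or at least a smooth, monotone — inverse $z = z(\zfrak)$ on some $(0,\ep_1)$ with $z(0+) = 0$. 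Transporting \eqref{eq_nonreduced} through this inverse yields precisely \eqref{eq_reduced} on $(0,\ep_1)$.

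The main obstacle I expect is the bookkeeping in the recursive determination of $h$ together with the convergence estimate: one must verify that at every order the coefficient multiplying the unknown Taylor coefficient of $h$ is nonzero (the non-resonance condition coming from $\lambda \ge 1$), and then set up a suitable majorant series to conclude analyticity of $h$ in a full neighbourhood of the origin. Everything else — the asymptotic checks that $\zfrak \sim z$ and that $\zfrak$ is invertible, and the final substitution — is routine calculus once $h$ is available. I would lean on the treatment in \cite{Bie1965} for the convergence step rather than reproduce it in detail.
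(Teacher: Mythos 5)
Your proposal lands on the same underlying idea as the paper---the function $h$ one needs is exactly an analytic invariant curve of the field near the singular point, and once $h$ is found the change of variable $\zfrak = z - h(w)$ removes the obstruction---but the paper reaches it more economically. It inverts the roles of $z$ and $w$: since $w$ is monotone near $0$, one rewrites the equation as $\dd z/\dd w = \bigl(\lambda^{-1}z + [w,z]_2\bigr)\big/\bigl(w + p\lambda^{-1}z + [w,z]_2\bigr)$, which is of the form treated by Lemma~\ref{lem_holo_solution} with the admissible exponent $\lambda^{-1}\in(0,1]$, and so has an analytic solution $z = g(w) = [w]_2$; one then takes $h=g$. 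Your plan of determining $h$ order by order and invoking a majorant-series argument for convergence amounts to reproving Lemma~\ref{lem_holo_solution} (equivalently Fact~\ref{fact_bb_holo}) from scratch in a special case. That works, but it is considerably heavier than the paper's two-step reduction, and the convergence estimate is exactly the part you leave to ``a majorant argument identical in spirit to \ldots''.

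Two mechanics of your sketch also need correcting. First, the quantity that has to be tuned is not the remainder in your target \emph{numerator}: once you substitute $z = \zfrak + h(w)$ the numerator $\lambda w + pz + [w,z]_2$ is automatically of the form $\lambda w + p\zfrak + [w,\zfrak]_2$. What must be arranged is that the new \emph{denominator} $z + [w,z]_2 - h'(w)\bigl(\lambda w + pz + [w,z]_2\bigr)$, written in the variables $(w,\zfrak)$, vanishes identically at $\zfrak = 0$, i.e.\ that $z = h(w)$ is an invariant curve; only then is it a multiple of $\zfrak$ and can \eqref{eq_reduced} be cleared. This is precisely the recursion $(1-\lambda k)h_k = (\text{lower order})$, solvable for every $k\ge2$ because $\lambda\ge1$ forces $\lambda k\ne1$. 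Second, when checking that $\zfrak$ is a legitimate change of variable you claim that ``$w'$ stays bounded'' and that ``$w'\to \lambda w/z\cdot(1+o(1))$ which is $o(1)$ when $\lambda>1$''; neither follows from the stated hypotheses, since $w^2 = o(z)$ by itself still allows $w/z\to\infty$. The estimate that actually closes this step, and the one the paper uses, is $h'(w)w' = O(w\,w') = O(w^2/z + w) = o(1)$, read off directly from \eqref{eq_nonreduced} together with $w^2 = o(z)$; this gives $\dd\zfrak/\dd z\to1$ and hence invertibility on some $(0,\ep_1)$ without ever needing $w'$ itself to be bounded.
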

\begin{proof}
By hypothesis, $w(z)$ is monotone on $(0,\ep)$ and therefore possesses an inverse $z = z(w)$ on $(0,\delta)$, $\delta>0$, which satisfies
\begin{equation}
\label{eq_inverse_z_w}
\frac{\dd z}{\dd w} = \frac{\lambda^{-1} z + [w,z]_2}{w + p\lambda^{-1} z + [w,z]_2}.
\end{equation}
By Lemma \ref{lem_holo_solution}, there exists then an analytic solution $z = g(w) = [w]_2$ to \eqref{eq_inverse_z_w}, since $\lambda^{-1}\in (0,1]$ by hypothesis.
Setting $\zfrak = z-g(w)$ transforms \eqref{eq_inverse_z_w} into a differential equation, which has $\zfrak = 0$ as a solution, hence it is of the form
\[\frac{\dd \zfrak}{\dd w} = \frac{\lambda^{-1} \zfrak + \zfrak[w,\zfrak]_1}{w + p\lambda^{-1} \zfrak + [w,\zfrak]_2}.\]
We have $\dd\zfrak/\dd z = 1 + g'(w(z))w'(z) = 1 + O(w(z)w'(z))$. By \eqref{eq_nonreduced},
\[w(z)w'(z) = O(w(z)^2/z + w(z)) = o(1),\] by hypothesis. Hence, there exists $\ep_1>0$, such that  $\zfrak(z)$ is strictly increasing on $(0,\ep_1)$ and therefore has an inverse. Thus, $w(\zfrak) = w(z(\zfrak))$ satisfies
\[\frac{\dd w}{\dd \zfrak} = \frac{w + p\lambda^{-1} \zfrak + [w,\zfrak]_2}{\lambda^{-1} \zfrak (1 + [w,\zfrak]_1)}\quad\ton (0,\ep_2),\] for some $\ep_2 > 0$. Expanding $(1+[w,\zfrak]_1)^{-1}$ as a power series at $(w,\zfrak) = (0,0)$ gives \eqref{eq_reduced}.
\end{proof}

\subsection{Inversion of some analytic functions}
The results in this section are needed in the proofs of Corollary \ref{cor_analyticity_Ft} and Theorem \ref{th_density}.

\begin{lemma}
\label{lem_injective_log}
Let $\varphi\in(0,\pi)$, $r>0$ and $h$ be an analytic function on $H(\varphi,r)$ with $h(z) = o(z)$ as $z\to 0$. Then there exists $r_1>0$, such that for all $z_1,z_2\in H(\varphi,r_1)$,
\[\log z_1 - \log z_2 + \int_{z_2}^{z_1} h(z)\,\dd z \ne 0.\]
\end{lemma}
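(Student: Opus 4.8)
The plan is to recast the statement as an injectivity assertion and, via the substitution $\zeta=\log z$, reduce it to a contraction estimate on a half-strip. Since $\varphi<\pi$, the principal logarithm maps $H(\varphi,r)$ biholomorphically onto the convex half-strip $\Sigma=\{\zeta\in\C:\Re\zeta<\log r,\ |\Im\zeta|<\varphi\}$, with inverse $\zeta\mapsto\e^{\zeta}$; in particular $H(\varphi,r)$ is simply connected, so $h$ admits a primitive there and $\int_{z_2}^{z_1}h(z)\,\dd z$ does not depend on the path chosen in $H(\varphi,r)$. Fixing a base point $z_0\in H(\varphi,r)$ and writing $G(z)=\log z+\int_{z_0}^{z}h(w)\,\dd w$, the quantity in the statement equals $G(z_1)-G(z_2)$, so it is enough to produce $r_1>0$ for which $G$ is injective on $H(\varphi,r_1)$. (For $z_1=z_2$ the expression is trivially $0$, so the inequality is meant for $z_1\neq z_2$.)

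I would then transport $G$ through the substitution: put $\widetilde G(\zeta)=G(\e^{\zeta})=\zeta+K(\zeta)$, where $K(\zeta)=\int_{z_0}^{\e^{\zeta}}h(w)\,\dd w$ is holomorphic on $\Sigma$ with $K'(\zeta)=h(\e^{\zeta})\,\e^{\zeta}$. The crucial observation is that $K'$ is uniformly small near the thin end of the strip: the hypothesis $h(z)=o(z)$, applied with $\ep=1$, furnishes $\delta\in(0,r]$ such that $|h(z)|\le|z|$ for all $z\in H(\varphi,r)$ with $|z|<\delta$, and hence $|K'(\zeta)|=|h(\e^{\zeta})|\,\e^{\Re\zeta}\le\e^{2\Re\zeta}$ whenever $\Re\zeta<\log\delta$. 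Choose $r_1$ with $0<r_1<\min\{\delta,1\}$ and set $\Sigma_1=\{\zeta:\Re\zeta<\log r_1,\ |\Im\zeta|<\varphi\}$, the image of $H(\varphi,r_1)$ under $\log$; then $|K'(\zeta)|<r_1^{2}<1$ throughout $\Sigma_1$.

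It remains to conclude. Suppose $\zeta_1,\zeta_2\in\Sigma_1$ with $\widetilde G(\zeta_1)=\widetilde G(\zeta_2)$; then $\zeta_1-\zeta_2=-\bigl(K(\zeta_1)-K(\zeta_2)\bigr)=-\int_{\zeta_2}^{\zeta_1}K'(\zeta)\,\dd\zeta$ along the segment from $\zeta_2$ to $\zeta_1$, which lies in $\Sigma_1$ by convexity, and taking absolute values gives $|\zeta_1-\zeta_2|\le r_1^{2}\,|\zeta_1-\zeta_2|$, forcing $\zeta_1=\zeta_2$ and thus $z_1=\e^{\zeta_1}=\e^{\zeta_2}=z_2$. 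This shows $G$ is injective on $H(\varphi,r_1)$, which is the claim. I expect the only point needing care — routine rather than genuinely hard — to be the domain bookkeeping of the first paragraph: checking that $H(\varphi,r)$ is simply connected and that $\log$ and the primitive of $h$ are single-valued on it (this is where $\varphi<\pi$ enters), and that convexity of the strip makes the fundamental theorem of calculus applicable along the segment $[\zeta_2,\zeta_1]$.
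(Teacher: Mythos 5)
Your proof is correct for the hypothesis as literally printed, and it is essentially the same argument as the paper's: both reduce the claim to showing that $\int_{z_2}^{z_1}h$ is a small multiple of $\log z_1-\log z_2$. The paper gets this by integrating along an explicit L-shaped path in polar coordinates (one circular arc plus one radial segment), whereas you pass to $\zeta=\log z$ and bound $K'$ along the straight segment in the convex half-strip; under the logarithm the paper's L-shape is just the vertical-then-horizontal path in that same strip, so the two estimates are the same idea with different bookkeeping.

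One thing you should be aware of: the hypothesis $h(z)=o(z)$ in the printed statement is a typo for $h(z)=o(1/z)$. The paper's own proof writes $\int_{\gamma_1}h=|\varphi_1-\varphi_2|\cdot a_2\,o(1/a_2)$, which requires $|h(s)|=o(1/|s|)$ on the arc $|s|=a_2$, and the lemma is invoked twice downstream (in Lemmas \ref{lem_inversion} and \ref{lem_inversion_A}) precisely with integrands of the form $g'(z)/g(z)-1/z=o(1/z)$. Taken at face value with $h=o(z)$, the lemma is much weaker than what the paper needs. Your argument, however, survives the correction almost unchanged: under $h(z)=o(1/z)$ the line $|K'(\zeta)|=|h(\e^{\zeta})|\e^{\Re\zeta}\le\e^{2\Re\zeta}$ becomes $|K'(\zeta)|=|h(\e^{\zeta})|\e^{\Re\zeta}=o(1)$ as $\Re\zeta\to-\infty$, so one still has $|K'|<1$ on a suitably thin $\Sigma_1$ and the contraction goes through. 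I would adjust that single estimate so that the lemma you prove is the one the paper actually uses.
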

\begin{proof}
Let $z_1,z_2\in H(\varphi,r)$. Write $z_i = a_i \e^{i\varphi_i}$, with $a_i>0$, $\varphi_i\in(-\varphi,\varphi)$, $i=1,2$. Define the paths
\[\gamma_1(t) = a_2 \e^{i(t\varphi_1 + (1-t)\varphi_2)}\quad\tand\quad \gamma_2(t) = (ta_1+(1-t)a_2)\e^{i\varphi_1},\quad t\in[0,1],\] such that their concatenation forms a path from $z_2$ to $z_1$ in $H(\varphi,r)$. Then
\[\int_{\gamma_1} h(s)\,\dd s = |\varphi_1 - \varphi_2|\cdot a_2o(1/a_2)\quad\tand\quad \int_{\gamma_2} h(s)\,\dd s = |\log a_1 - \log a_2| o(1).\] As a consequence,
\[\left|\int_{z_2}^{z_1} h(s)\,\dd s\right| = (|\varphi_1-\varphi_2|+|\log a_1 - \log a_2|)o(1) \le \sqrt 2 |\log z_1 - \log z_2| o(1).\]
This proves the statement.
\end{proof}

\begin{lemma}
\label{lem_inversion}
Let $r>0$ and $\varphi\in(0,\pi]$. Let $g$ and $h$ be analytic functions on $H(\varphi,r)$ with
$g'(z) = 1+o(1)$, $h'(z) = \log \frac 1 z +O(1)$, $g(z) \to 0$ and $h(z) \to 0$ as $z\to 0$ in $H(\varphi,r)$. Then for each $\varphi_0 \in (0,\varphi)$ and $\varphi_1\in(\varphi_0,\varphi)$ there exist $r_0,r_1>0$, such that $g$ and $h$ are injective on $H(\varphi_1,r_1)$ and the images of $H(\varphi_1,r_1)$ by $g$ and $h$ contain $H(\varphi_0,r_0)$.
\end{lemma}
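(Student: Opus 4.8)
\emph{Overview and first reductions.} The plan is to prove injectivity and the image statement separately, after converting the hypotheses on $g'$ and $h'$ into the asymptotics
\[
g(z)=z\,(1+o(1)),\qquad h(z)=z\log\tfrac1z\,(1+o(1)),\qquad z\to0\text{ in }H(\varphi,r),
\]
obtained by integrating along the segment from $0$ to $z$, using $\int_0^z\log\frac1w\,\dd w=z(\log\frac1z+1)$ and the boundedness of $h'(w)-\log\frac1w$ near $0$ (the relevant bounds depend only on $|w|$, hence are uniform in $\arg w$).

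\emph{Injectivity.} The real difficulty is that $H(\varphi_1,r_1)$ is not convex once $\varphi_1\ge\pi/2$, so the standard univalence criterion ``$\Re F'>0$ on a convex domain $\Rightarrow F$ injective'' (immediate by integrating $F'$ along segments) is not directly available. I would remove this obstruction with the substitution $\sigma(\zeta)=\zeta^2$: since $\varphi_1<\pi$, the principal square root is a biholomorphism $H(\varphi_1/2,\sqrt{r_1})\to H(\varphi_1,r_1)$, so it suffices to show $g\circ\sigma$ and $h\circ\sigma$ are injective on the \emph{convex} truncated sector $H(\varphi_1/2,\sqrt{r_1})$ (half-angle $<\pi/2$). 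One computes $(g\circ\sigma)'(\zeta)=2\zeta\,(1+o(1))$ and $(h\circ\sigma)'(\zeta)=2\zeta\,(\log\tfrac1{\zeta^2}+O(1))$; in both cases the second factor has argument tending to $0$ as $\zeta\to0$, its real part being $\ge\log\tfrac1{r_1}-O(1)\to\infty$ while its imaginary part stays bounded by $\pi+O(1)$. Hence for $r_1$ small the argument of $(F\circ\sigma)'$ lies in $(-\varphi_1/2-\ep,\varphi_1/2+\ep)\subset(-\pi/2,\pi/2)$, so $\Re(F\circ\sigma)'>0$ on the convex sector, whence $F\circ\sigma$ — and therefore $F$ — is injective on $H(\varphi_1,r_1)$.

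\emph{Image.} For $w\in H(\varphi_0,r_0)$ I would solve $F(z)=w$ by a Banach fixed point argument on a small disc centred at an explicit approximate solution: $z_w:=w$ for $F=g$, and $z_w:=w/\log\tfrac1w$ for $F=h$ (for which $h(z_w)=w(1+o(1))$ by the asymptotics above, a Lambert-$W$–type first guess). Setting $T(z)=z-\lambda^{-1}(F(z)-w)$ with $\lambda=1$ for $g$ and $\lambda=\log\tfrac1{z_w}$ for $h$, one has $T'(z)=1-\lambda^{-1}F'(z)=o(1)$ on $\overline{\D(z_w,\eta|z_w|)}$ once $r_0$ is small, since $F'$ is close to $\lambda$ there; fixing $\eta$ small so that $\arcsin\eta<\varphi_1-\varphi_0$ and then taking $r_0$ small enough, this disc lies in $H(\varphi_1,r_1)$ and $T$ maps it into itself, so $T$ has a fixed point $z^{\ast}$ with $F(z^{\ast})=w$; thus $H(\varphi_0,r_0)\subset F(H(\varphi_1,r_1))$. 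The three angles enter exactly here: $\varphi_1<\varphi$ keeps all the relevant discs and sectors inside the domain of analyticity of $g$ and $h$, while $\varphi_0<\varphi_1$ supplies the angular room for $\overline{\D(z_w,\eta|z_w|)}$. I expect the only genuinely delicate point to be the non-convexity issue in the injectivity step, which the square-root substitution settles cleanly; the remaining estimates are routine.
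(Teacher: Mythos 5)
Your proof is correct, but it follows a genuinely different route from the paper's on both halves of the statement. For \emph{injectivity}, the paper argues by contradiction: if $g(z_1)=g(z_2)$, join $z_2$ to $z_1$ by a path $\gamma$ in $H(\varphi_1,r_1)$; since $\arg g(z)=\arg z+o(1)$, the loop $g\circ\gamma$ lies in $\C\setminus(-\infty,0]$, so $0=\int_{g\circ\gamma}\tfrac{\dd z}{z}=\int_\gamma\tfrac{g'(z)}{g(z)}\,\dd z=\log z_1-\log z_2+\int_\gamma o(1/z)\,\dd z$, which is impossible by Lemma~\ref{lem_injective_log}. You instead convexify via the pull-back $\sigma(\zeta)=\zeta^2$ (a biholomorphism onto $H(\varphi_1,r_1)$ because $\varphi_1<\pi$), check that $(F\circ\sigma)'$ has argument within $(-\pi/2,\pi/2)$, and invoke the Noshiro--Warschawski criterion on the convex sector $H(\varphi_1/2,\sqrt{r_1})$; this correctly circumvents the non-convexity obstruction you identify, and the estimates on the ``second factor'' are right. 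For the \emph{image}, the paper takes a topological route: since $\arg g(z)=\arg z+o(1)$, the curve $g(\partial H(\varphi_1,r_1))$ winds once around $H(\varphi_0,r_0)$ for suitable $r_0$, and simple connectedness of the (conformal) image then forces containment; you instead produce a preimage directly by a Newton--Banach contraction about the explicit first guess $z_w=w$ (for $g$) or $z_w=w/\log\tfrac1w$ (for $h$), using that $F'$ is uniformly close to $\lambda$ on the disk $\overline{\D(z_w,\eta|z_w|)}$ once $\arcsin\eta<\varphi_1-\varphi_0$ and $r_0$ is small. Both arguments are sound; the paper's is shorter and leverages the auxiliary Lemma~\ref{lem_injective_log} it has already set up, while yours is more elementary and constructive (it even localizes the preimage near $z_w$), at the price of somewhat more bookkeeping in the $h$ case. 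You have correctly flagged the only delicate geometric points (non-convexity for $\varphi_1\ge\pi/2$; the angular margin $\varphi_1-\varphi_0$ needed for the contraction disk), and the remaining estimates are indeed routine given the uniform $o(1)$ and $O(1)$ bounds on $g'-1$ and $h'-\log\tfrac1z$.
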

\begin{proof}
By hypothesis, $g(z) = z+o(z)$ as $z\to0$ in $H(\varphi,r)$, whence $\arg g(z) = \arg z + o(1)$. Thus, there exists $r_1>0$, such that $g(H(\varphi_1,r_1)) \subset \C\backslash(-\infty,0]$.

Suppose that there exist $z_1,z_2\in H(\varphi_1,r_1)$, such that $g(z_1) = g(z_2)$. Let $\gamma$ be a path from $z_2$ to $z_1$ in $H(\varphi_1,r_1)$. Then $g\circ\gamma$ is a loop in $\C\backslash(-\infty,0]$, whence
\[0 = \int_{g\circ\gamma}\frac 1 z\,\dd z = \int_\gamma \frac{g'(z)}{g(z)}\,\dd z = \log z_1 - \log z_2 + \int_\gamma o(\tfrac 1 z)\,\dd z.\]
By Lemma \ref{lem_injective_log}, we can choose $r_1$ so small, that this equality cannot hold, whence $g$ is injective on $H(\varphi_1,r_1)$.

Since $g(z)\to0$ and $\arg g(z) = \arg z + o(1)$ as $z\to0$, there exists $r_0>0$, such that $g(\partial H(\varphi_1,r_1))$ encloses $H(\varphi_0,r_0)$. Now, since $g$ is injective on $H(\varphi_1,r_1)$, $H(\varphi_1,r_1)$ and $g(H(\varphi_1,r_1))$ are conformally equivalent,  whence $g(H(\varphi_1,r_1))$ is simply connected. It follows that $g(H(\varphi_1,r_1)) \supset H(\varphi_0,r_0)$.

Exactly the same arguments hold for $h$, since $h(z) = z(\log\tfrac 1 z + O(1))$ by hypothesis, whence $\arg h(z) = \arg z + o(1)$ and $h'(z)/h(z) = 1/z + o(1/z)$ as $z\to 0$.
\end{proof}

\begin{lemma}
\label{lem_inversion_A}
Let $r>0$, $\varphi\in(0,\pi]$ and $t \in \R$. Let $g$ be an analytic function on $H(\varphi,r)$ with
\[g'(z) = \frac 1 z + \frac{c}{z\log z}+O\left(\frac{1}{z|\log z|^\gamma}\right),\quad\tas z\to 0\text{ in }H(\varphi,r),\] for some $c\in\R$ and $\gamma > 1$. Then for each $0 < \varphi_0 < \varphi_1 < \varphi$ there exist $r_0,r_1>0$, such that $g$ is injective on $H(\varphi_1,r_1)$ and $g(z)+t\in g(H(\varphi_1,r_1))$ for every $z\in H(\varphi_0,r_0)$.
\end{lemma}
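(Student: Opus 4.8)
The plan is to integrate the hypothesis on $g'$ into a usable formula for $g$, then to prove injectivity by an argument parallel to (and in fact simpler than) the one in Lemma~\ref{lem_inversion}, and finally to show that $g$ carries $H(\varphi_1,r_1)$ onto a region containing an honest half-strip; the covering statement will then follow by choosing $r_0$ small. For the first step: since $\gamma>1$, the function $\frac1{|z||\log z|^{\gamma}}$ is integrable along rays approaching $0$, and since $\frac{\dd}{\dd z}\log\log\frac1z=\frac1{z\log z}$, integrating the hypothesis will give, on $H(\varphi,r')$ for some $r'\in(0,r)$,
\[
g(z)=\log z+c\log\log\tfrac1z+\psi(z),
\]
where $\psi$ is analytic and bounded with $\psi(z)\to\psi_0\in\C$ as $z\to0$. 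I will use three consequences of this repeatedly: $g'(z)=\frac1z(1+o(1))$; $\Re g(z)=\log|z|\,(1+o(1))$; $\Im g(z)=\arg z+\Im\psi_0+o(1)$ uniformly in $\arg z$ as $z\to0$; and, accordingly, $\rho\mapsto\Re g(\rho\e^{\ii\theta})$ is strictly increasing for $\rho$ small because $\Re\bigl(g'(\rho\e^{\ii\theta})\e^{\ii\theta}\bigr)=\frac1\rho(1+o(1))>0$.

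Next I would fix $\varphi_1<\varphi_2<\varphi$ and prove $g$ injective on $H(\varphi_2,r_2)$ for $r_2<r'$ small. If $g(z_1)=g(z_2)$ with $z_1,z_2\in H(\varphi_2,r_2)$ and $\sigma$ is a path joining them inside $H(\varphi_2,r_2)$, then, $\log$ being single-valued on the sector since $\varphi_2<\varphi\le\pi$, the fundamental theorem of calculus gives
\[
0=\int_\sigma g'(s)\,\dd s=\log z_1-\log z_2+\int_\sigma\Bigl(g'(s)-\tfrac1s\Bigr)\,\dd s .
\]
Here $g'(s)-\frac1s=\frac{c}{s\log s}+O\bigl(\frac1{s|\log s|^{\gamma}}\bigr)=o(\tfrac1s)$, so the estimate already used in the proof of Lemma~\ref{lem_injective_log} bounds the last integral by $\sqrt2\,|\log z_1-\log z_2|\,o(1)$, which for $r_2$ small forces $z_1=z_2$. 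Taking $r_1<r_2$ then makes $g$ injective and continuous on $\overline{H(\varphi_1,r_1)}\setminus\{0\}\subset H(\varphi_2,r_2)$.

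The heart of the argument will be the claim that, with $\varphi'=\tfrac12(\varphi_0+\varphi_1)$ and $r_1$ small enough, the half-strip $\Pi=\{w:\Re w<\tfrac32\log r_1,\,|\Im w-\Im\psi_0|<\varphi'\}$ lies inside $g(H(\varphi_1,r_1))$. To see that a given $w_0\in\Pi$ is attained, I would choose $\rho\in(0,r_1)$ so small that $\Re g(\rho\e^{\ii\theta})<\Re w_0$ for all $|\theta|<\varphi_1$ (possible since $\Re g(\rho\e^{\ii\theta})\le\tfrac12\log\rho\to-\infty$ uniformly in $\theta$) and also that $w_0$ avoids the image of $\partial H_\rho$, where $H_\rho=\{\rho<|z|<r_1,\,|\arg z|<\varphi_1\}$. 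By the injectivity step $g\circ\partial H_\rho$ is a Jordan curve and $g(H_\rho)$ is the bounded region it encloses. From the formula for $g$, for $r_1$ small this curve splits into: the image of the outer arc, on which $\Re g>\tfrac32\log r_1>\Re w_0$; the image of the inner arc, on which $\Re g<\Re w_0$; and the images of the two radial segments, on which $\Im g$ stays within $\varphi_1-\varphi'$ of $\pm\varphi_1+\Im\psi_0$ while $\Re g$ runs monotonically between the two previous levels. A vertical ray issued upward from $w_0$ then meets $g\circ\partial H_\rho$ exactly once, so $w_0\in g(H_\rho)\subset g(H(\varphi_1,r_1))$.

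To finish, I would note that the formula for $g$ also gives $g(H(\varphi_0,r_0))\subset\{\Re w<\tfrac12\log r_0,\,|\Im w-\Im\psi_0|<\varphi'\}$ once $r_0$ is small, so that choosing in addition $r_0<r_1^{3}\e^{-2t}$ yields $g(H(\varphi_0,r_0))+t\subset\Pi\subset g(H(\varphi_1,r_1))$, which is the assertion. The hard part will be the half-strip claim: controlling the shape of the boundary curve $g\circ\partial H_\rho$, uniformly as the inner radius $\rho\downarrow0$ and as $r_1\downarrow0$, precisely enough to make the winding-number argument rigorous.
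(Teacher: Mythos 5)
Your proposal is correct and follows essentially the same route as the paper's proof: injectivity via the same path-integral estimate through Lemma~\ref{lem_injective_log}, integration of $g'$ to get $g(z)=\log z + c\log\log\tfrac1z + o(1)$, the observation that $\Re g\to-\infty$ and $\Im g=\arg z+o(1)$ so that the boundary image encloses a half-strip, and then a shrink-$r_0$ step to absorb the shift by $t$. The only genuine difference is that you carry out the ``half-strip is in the image'' step by truncating to compact subregions $H_\rho$ and running a winding-number argument as $\rho\downarrow 0$, whereas the paper invokes simple connectivity of the image (from conformal equivalence) and Jordan separation on the full, unbounded boundary curve in one stroke; your version is slightly more scrupulous at that point but not a different method.
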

\begin{proof}
By the hypothesis on $g$, we have for $z_1,z_2\in H(\varphi,r)$, 
\[g(z_1)-g(z_2) = \log z_1 - \log z_2 + \int_{z_2}^{z_1} o(1/z)\,\dd z.\]
By Lemma \ref{lem_injective_log}, there exists therefore $r_1>0$, such that $g$ is injective on  $H(\varphi_1,r_1)$.

Since $1/(x|\log x|^\gamma)$ is integrable near 0, we have
\[g(z) = \log z + c \log(\log \tfrac 1 z) + o(1),\quad\tas z\to 0,\]
where we assume without loss of generalisation that the constant of integration is 0. It follows that $\Re g(z)\to -\infty$ and $\Im g(z) = \arg z + o(1)$ as $z\to 0$, since $c\in\R$. Hence, there exists an $R\in\R$, such that $g(\partial H(\varphi_1,r_1))$ encloses the strip $S = S_-(R,\varphi_1)$. As in the proof of Lemma \ref{lem_inversion}, it follows that $S\subset g(H(\varphi_1,r_1))$. Furthermore, again by the asymptotics of $\Re g$ and $\Im g$, there exists $r_0>0$, such that $g(s)+ t \in S$ for every $s\in G(\varphi_0,r_0)$. This concludes the proof.
\end{proof}

\begin{lemma}
\label{lem_inversion_formula_d_ge_1}
Let $w(z)$ be an analytic function on an open subset of $\C\backslash(-\infty,0]$, such that $w(z)\to 0$ as $z\to0$ and
\[z = w + a_2w^2 + \cdots + a_nw^n + o(w^\gamma),\quad\tas z\to0,\] for some $n\in\N$, $\gamma > n$ and $a_2,\ldots,a_n\in\C$. Then there exist $b_2,\ldots,b_n\in\C$, such that
\[w(z) = z + b_2z^2 + \cdots + b_nz^n + o(z^\gamma),\quad\tas z\to0.\]
\end{lemma}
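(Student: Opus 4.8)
The plan is to read the hypothesis as saying that $w(z)$ agrees, up to an error of size $o(z^{\gamma})$, with the value at $z$ of the analytic inverse of the polynomial $Q(w):=w+a_{2}w^{2}+\cdots+a_{n}w^{n}$, and then to truncate that inverse at order $n$. The one preliminary fact I need is that $w(z)\sim z$ as $z\to 0$ in the domain of $w$: the hypothesis gives $z-w(z)=a_{2}w(z)^{2}+\cdots+a_{n}w(z)^{n}+o(w(z)^{\gamma})=o(w(z))$ since $w(z)\to 0$, so $z=w(z)\,(1+o(1))$; in particular $w(z)\neq 0$ on a punctured neighbourhood of $0$, and the error term in the hypothesis may henceforth be written $o(z^{\gamma})$.

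Since $Q'(0)=1\neq 0$, the holomorphic inverse function theorem (see \cite{Hor1973}) provides $r_{0}>0$ and an analytic function $R$ on $\D(0,r_{0})$ with $R(0)=0$, $R'(0)=1$ and $R(Q(w))=w$ for all sufficiently small $w$; its Taylor expansion at the origin is $R(z)=z+b_{2}z^{2}+\cdots+b_{n}z^{n}+O(z^{n+1})$. Put $\eta(z):=z-Q(w(z))$, so $\eta(z)=o(z^{\gamma})$ by the hypothesis and the previous paragraph. For $z$ near $0$, $w(z)$ lies in the disk on which $R\circ Q$ is the identity, so
\[
w(z)=R(Q(w(z)))=R(z-\eta(z)).
\]
With $M:=\sup_{|\zeta|\le r_{1}}|R'(\zeta)|<\infty$ for some $r_{1}<r_{0}$, we get $|R(z-\eta(z))-R(z)|\le M\,|\eta(z)|=o(z^{\gamma})$, hence $w(z)=R(z)+o(z^{\gamma})$. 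Since $\gamma<n+1$ (as holds in the applications, where $n=\lfloor\gamma\rfloor$), the remainder $O(z^{n+1})$ in the expansion of $R$ is $o(z^{\gamma})$ too, and combining the two displays yields the asserted expansion with exactly these $b_{2},\dots,b_{n}$.

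The only step needing any care is the first one: one has to secure $w(z)\sim z$ so that $o(w^{\gamma})$ legitimately becomes $o(z^{\gamma})$, and to check that $z-\eta(z)$ stays in the disk of convergence of $R$ for $z$ small; after that the argument is just the local boundedness of $R'$. A fully elementary alternative, avoiding the inverse function theorem, is to bootstrap in the rearranged equation $w=z-a_{2}w^{2}-\cdots-a_{n}w^{n}-o(w^{\gamma})$: from the initial approximation $w(z)=z+O(z^{2})$ one substitutes the current approximation of $w$ into the right-hand side $n-1$ times, successively reading off $b_{2}=-a_{2}$, then $b_{3}$, and so on up to $b_{n}$, with the accumulated error remaining $o(z^{\gamma})$ throughout; this is the route to take if one prefers to stay within ``elementary calculus''.
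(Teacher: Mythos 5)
Your proof is correct and takes a genuinely different route from the paper's. The paper proceeds algebraically: it expands each power $z^i$ in terms of $w$ (producing a triangular array of coefficients $a_{i,j}$), defines the $b_k$ by a recursion $b_k = -(a_{1,k}+b_2 a_{2,k}+\cdots+b_{k-1}a_{k-1,k})$, checks that $z+b_2z^2+\cdots+b_nz^n = w + o(w^\gamma)$, and finishes with $w\sim z$ to convert $o(w^\gamma)$ to $o(z^\gamma)$. You instead invoke the holomorphic inverse function theorem for the polynomial $Q(w)=w+a_2w^2+\cdots+a_nw^n$, write $w(z)=R(z-\eta(z))$ with $\eta(z)=o(z^\gamma)$, and use a local bound on $R'$ to absorb the perturbation. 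Both work; your argument is more conceptual and makes the identity of the $b_k$ (as Taylor coefficients of $Q^{-1}$) transparent, while the paper's recursion is elementary and self-contained. The ``bootstrap'' alternative you sketch at the end is essentially the paper's proof in disguise.

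One point where you are \emph{more} careful than the paper deserves emphasis. You flag that the argument needs $\gamma < n+1$. This is not a cosmetic condition: the lemma as stated, with only $\gamma > n$, is actually false. Take $n=2$, $a_2=1$, $z=w+w^2$ exactly; then $w(z)=z-z^2+2z^3+O(z^4)$, so $w(z)-z-b_2z^2$ is $\Theta(z^3)$ for any choice of $b_2$, and this is not $o(z^\gamma)$ once $\gamma\ge 3$. The paper's own proof hides this requirement in the line ``$z^i = w^i + a_{i,i+1}w^{i+1}+\cdots+a_{i,n}w^n + o(w^\gamma)$'', which already presumes that every term of degree $\ge n+1$ in the expansion of $z^i$ is $o(w^\gamma)$, i.e.\ that $\gamma<n+1$. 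In the sole application of this lemma (in the proof of Theorem \ref{th_asymptotics}), one has $n=\lfloor\gamma\rfloor$ with $\gamma\notin\N$, so $n<\gamma<n+1$ does hold and no harm is done; but the hypothesis of the lemma really ought to read $n<\gamma<n+1$, or equivalently $n=\lfloor\gamma\rfloor$ with $\gamma\notin\N$.
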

\begin{proof}
For every $i\in\N$, we have by hypothesis 
\[z^i = w^i + a_{i,i+1}w^{i+1} + \cdots + a_{i,n}w^n + o(w^\gamma),\] for some $a_{i,i+1},\ldots,a_{i,n}\in\C$. For $2\le k\le n$, define recursively (with $b_1 = 1$) \[b_{k} = -(a_{1,k} + b_2 a_{2,k} + \cdots + b_{k-1} a_{k-1,k}).\] Then, $z + b_2 z^2 + \cdots + b^n z^n = w + o(w^\gamma).$ The statement now follows from the fact that $w(z)\sim z$ as $z\to0$ by hypothesis, whence $o(w^\gamma) = o(z^\gamma)$.
\end{proof}

\begin{lemma}
\label{lem_inversion_formula_d_eq_1}
Let $w(z)$ be an analytic function on an open subset of $\C\backslash(-\infty,0]$, such that $w(z)\to 0$ as $z\to0$ and
\[cz = w\log\tfrac 1 w + Cw + o(w),\quad\tas z\to 0,\] for some constants $c>0$, $C\in\C$. Then
\[w = \frac{cz}{\log \tfrac 1 z}\left(1-\frac{\log\log \tfrac 1 z + C-\log c + o(1)}{\log \tfrac 1 z}\right),\quad\tas z\to0.\]
\end{lemma}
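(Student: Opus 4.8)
The plan is to determine first the asymptotics of $\log\tfrac1w$ in terms of $\log\tfrac1z$ by a bootstrap argument, and then to substitute the result back into the defining relation. First I would note that for $z$ close enough to $0$ the hypothesis forces $w(z)\ne0$: setting $w=0$ in $cz=w\log\tfrac1w+Cw+o(w)$ would give $cz=0$, hence $z=0$, since $c>0$. We may therefore put $v:=\log\tfrac1w$ and $L:=\log\tfrac1z$ (principal branches; in the sectorial domains to which the lemma is applied the arguments of $w$ and $z$ stay bounded as $z\to0$, so $\Re v\to+\infty$ and $\Re L\to+\infty$, and the branch of the logarithm is unambiguous). With this notation the hypothesis becomes
\[
 w=\frac{cz}{v+C+o(1)}\qquad\tas z\to0.
\]

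Next I would run the bootstrap for $v$. Taking $-\log$ of this display and using $\log(v+C+o(1))=\log v+\log\bigl(1+\tfrac{C+o(1)}{v}\bigr)=\log v+o(1)$ (valid since $\Re v\to\infty$), one obtains
\[
 v=L-\log c+\log v+o(1).
\]
As $\log v=o(v)$ while $\log c$ is a constant, this first gives $v\sim L$; feeding that back in via $\log v=\log L+\log(v/L)=\log L+o(1)$ yields the sharper estimate
\[
 v=L+\log L-\log c+o(1).
\]

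Finally I would substitute this back into $w=cz/(v+C+o(1))$:
\[
 w=\frac{cz}{L+\log L-\log c+C+o(1)}=\frac{cz}{L}\,\Bigl(1+\frac{\log L-\log c+C+o(1)}{L}\Bigr)^{-1}.
\]
Expanding the geometric series, the terms of order two and higher are $O\bigl((\log L)^2/L^2\bigr)$, and since $(\log L)^2/L\to0$ this is $o(1)/L$, which is absorbed into the first-order remainder. Hence
\[
 w=\frac{cz}{L}\,\Bigl(1-\frac{\log L+C-\log c+o(1)}{L}\Bigr),
\]
and recalling $L=\log\tfrac1z$ and $\log L=\log\log\tfrac1z$, this is precisely the asserted formula.

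The routine part is the collection of elementary estimates on logarithms; the step that needs care is the bootstrap itself, namely tracking the error terms so that exactly one explicit correction term survives. Concretely, one must check that the quadratic term in the last geometric expansion is genuinely of size $o(1)/\log\tfrac1z$ — so that it vanishes into the stated $o$-term instead of contributing an extra $(\log\log\tfrac1z)^2/(\log\tfrac1z)^2$ term — and that the branch choices above are legitimate, which is automatic in the sectorial domains $H(\varphi,r)$ where Lemma \ref{lem_inversion_formula_d_eq_1} is used.
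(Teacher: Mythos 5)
Your proof is correct and uses essentially the same bootstrap idea as the paper; you track $v=\log\frac{1}{w}$ and iterate the relation $v=L-\log c+\log v+o(1)$, whereas the paper tracks the ratio $g(z)=w\log\frac{1}{z}/(cz)$ and iterates $g\sim 1$, but the two bookkeepings are interchangeable and the key steps (crude estimate $\log w\sim\log z$, one refinement, substitution and geometric expansion) coincide.
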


\begin{proof}
Set $f(z) = w(z)/z$. By hypothesis, $\log z \sim \log w = \log z + \log f(z)$, whence $\log f(z) = o(\log z)$. Now define $g(z)$ by
\[w(z) = \frac{cz}{\log \tfrac 1 z} g(z),\]
such that $\log g(z) = \log f(z) - \log \log \tfrac 1 z = o(\log z).$
By hypothesis,
\[
cz \sim w\log\tfrac 1 w = \frac{cz}{\log \tfrac 1 z}g(z)\left(\log\log\tfrac 1 z+\log \tfrac 1 z-\log c-\log g(z)\right) \sim czg(z),
\]
whence $g(z)\sim 1$, which implies $\log g(z) = o(1)$. It now follows from the hypothesis that
\[
cz = \frac{cz}{\log \tfrac 1 z}g(z)\Big(\log\log \tfrac 1 z - \log cz + C + o(1)\Big),
\]
whence
\[g(z) = \left(1+\frac{\log\log \tfrac 1 z + C - \log c + o(1)}{\log \tfrac 1 z}\right)^{-1}.\] The statement now follows from the series representation of $(1+z)^{-1}$ at $z=0$.
\end{proof}

\section*{Acknowledgements}
\addcontentsline{toc}{section}{Acknowledgements}
The author is grateful to Elie A\"{\i}d\'ekon for stimulating discussions, to Louis Boutet de Monvel for several useful comments, to Jean Bertoin for useful comments on the proof of Lemma\;\ref{lem_difeq_branching_diffusion} and to Andreas Kyprianou and Yanxia Ren for having brought to my attention the reference \cite{Yang2011}. Furthermore, the author thanks two anonymous referees for their detailed comments, which led to a considerable improvement in the exposition of the paper.

\setlength{\baselineskip}{13pt}
\addcontentsline{toc}{section}{References}
\bibliography{absorbed_particles}

\end{document}